\documentclass[11pt]{amsart}

\usepackage[margin=1in,letterpaper,portrait]{geometry}
\usepackage{amsmath}
\usepackage{amssymb}
\usepackage{amsthm}
\usepackage{verbatim}
\usepackage{fancyhdr}
\usepackage{url}
\usepackage{tikz}
\usepackage{tkz-euclide}
\usepackage{wrapfig}
\usepackage{enumerate}
\usetikzlibrary{calc,intersections}
\usepackage{float}
\usepackage{subfigure}
\usepackage{bookmark}
\usepackage{hyperref}
\usepackage[normalem]{ulem}

\newcommand{\mb}{\mathbb}

\newcommand{\mc}{\mathcal}





\begin{document}
\theoremstyle{plain} 
\newtheorem{Thm}{Theorem}[section]
\newtheorem*{Thm*}{Theorem} 
\newtheorem{Cor}[Thm]{Corollary}
\newtheorem*{Cor*}{Corollary}
\newtheorem{Main}{Main Theorem}
\renewcommand{\theMain}{} 
\newcommand{\Sol}{\text{Sol}}
\newtheorem{Lem}[Thm]{Lemma}
\newtheorem{Prop}[Thm]{Proposition} 
\newtheorem{Conj}[Thm]{Conjecture}

\theoremstyle{definition} 
\newtheorem{Def}{Definition}
\newtheorem{Exer}{Exercise} 
\newtheorem{Prob}{Problem}
\newtheorem*{Rem}{Remark}

\theoremstyle{remark} 


\newcommand{\pd}[2]{\frac{\partial #1 }{\partial #2 }}
\newcommand{\grad}{\bigtriangledown}
\newcommand{\revision}[1]{\textbf{\color[rgb]{1, 0, 0} #1}}

\title{Tripod Configurations of Curves}

\author{Eric Chen}
\address{Princeton University}
\email{ecchen@princeton.edu}

\author{Nick Lourie}
\address{Brown University}
\email{nalourie@gmail.com}

\begin{abstract}
Tripod configurations of plane curves, formed by certain triples of normal lines coinciding at a point, were introduced by Tabachnikov, who showed that $C^2$ closed convex curves possess at least two tripod configurations. Later, Kao and Wang established the existence of tripod configurations for $C^2$ closed locally convex curves. In this paper we generalize these two results, answering a conjecture of Tabachnikov on the existence of tripod configurations for all $C^2$ closed plane curves by proving existence for a generalized notion of tripod configuration. We then demonstrate the existence of the natural extensions of these tripod configurations to the spherical and hyperbolic geometries for a certain class of convex curves, and discuss an analogue of the problem for regular plane polygons. 
\end{abstract}

\maketitle

\section{Introduction}\label{IN}

As introduced by Tabachnikov in \cite{ST}, a tripod configuration for a $C^2$ convex closed curve $\gamma$ consists of three perpendicular lines dropped from three points on $\gamma$ meeting at a single point and together making angles of $2\pi/3$. Tabachnikov proved that all $C^2$ closed convex curves have at least two tripod configurations, and later Kao and Wang \cite{KW} showed the existence and provided a lower bound on the number of tripod configurations as defined for locally convex curves.

Below in this section we fix the definitions for tripod configurations of closed curves in the plane to be used later. In Section \ref{PR} we present previous results on tripod configurations; curves are assumed to be $C^2$ and closed unless otherwise specified. In Section \ref{loca} we give an improved lower bound over the result in \cite{KW} for the number of tripod configurations possessed by locally convex plane curves. In Section \ref{TTNIT} we generalize the notion of tripod configurations to ``triple normals,'' and show that these ``triple normals'' exist for any $C^2$ closed plane curve. It follows as a corollary of this result that every $C^2$ closed plane curve (including immersed curves possibly with self intersections) has at least one tripod configuration. In Sections \ref{MTIntro} through \ref{CMT} we extend the notion of tripod configurations to the spherical and hyperbolic geometries and show the existence of tripod configurations for convex closed curves sufficiently close to circles.  Finally, in Section \ref{poly} we describe an extension of the notion of tripod configurations for regular polygons.

The consideration of tripod configurations arises from the discussion of the four vertex theorem and related results from Tabachnikov \cite{ST}. Tripod configurations are also natural generalizations of two classical notions, the Fermat-Toricelli point and double normals of closed curves. 

The Fermat-Toricelli point of a triangle is the unique point minimizing the sum of the distances from the three vertices of the triangle to the point; when no angle of the triangle exceeds $2\pi/3$, the lines from the triangle vertices to the Fermat-Toricelli point form angles of $2\pi/3$; otherwise, the Fermat-Toricelli point coincides with a triangle vertex. Thus, the intersection of the three lines of a tripod configuration of convex closed curve $\gamma$ is exactly the Fermat-Toricelli point of the triangle with vertices given by the three points on $\gamma$ from which the perpendiculars are dropped.

\begin{figure}[H]  
  \centering  
  \def\svgwidth{0.25\columnwidth}  
  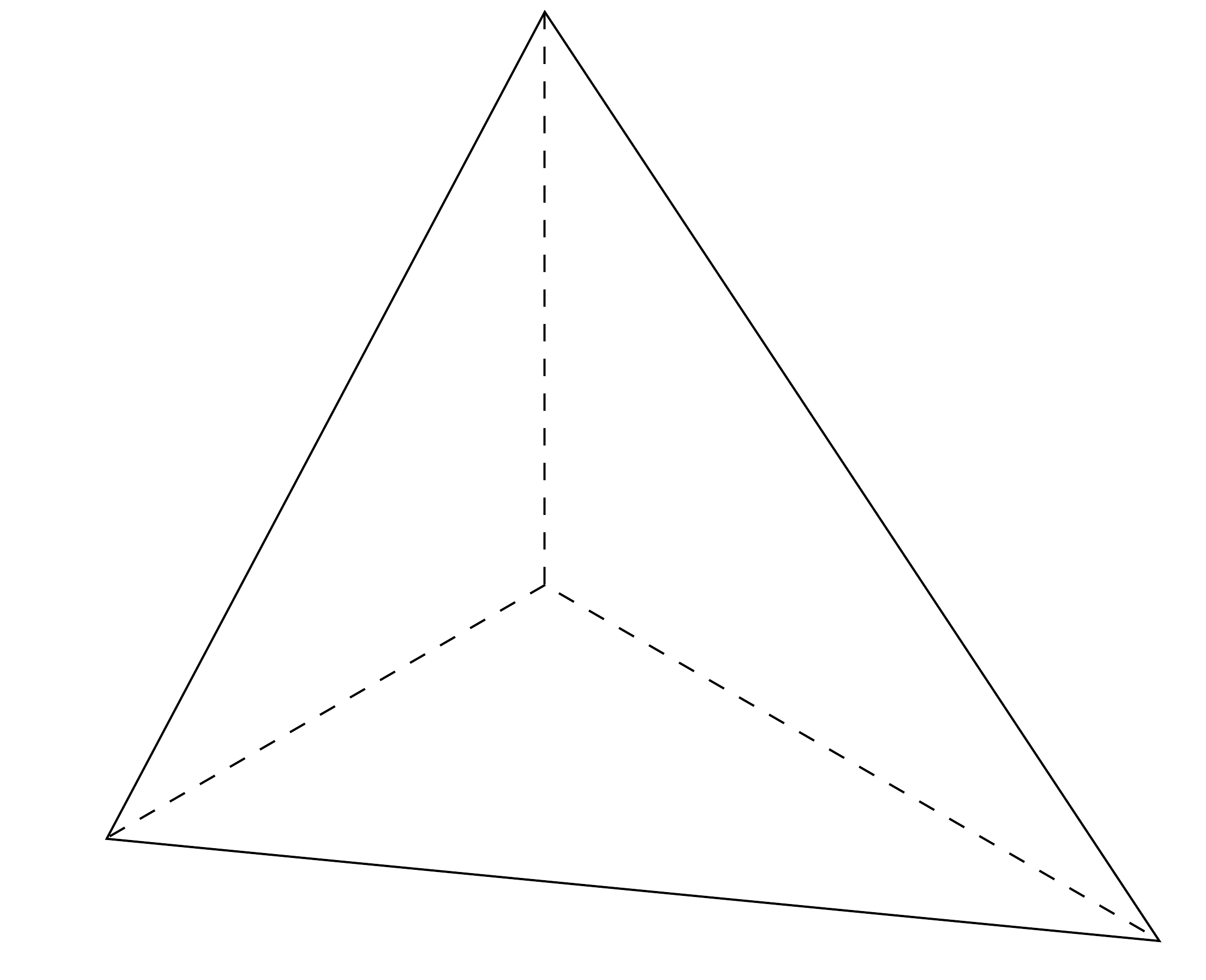
  \caption{The Fermat-Toricelli point $P$ of a triangle}  
\end{figure}

The study of double normals or diameters, chords of closed curves (or surfaces) meeting the curves orthogonally, appears for example in \cite{BH} and \cite{NK}, in which lower bounds and formulae connecting double normals and tangency lines are established. Double normals also arise in the context of curves of constant width; for instance, see \cite{YB}. Tripod configurations are then an extension to three normals with ``nice'' meeting (evenly spaced angles of $2\pi/3$) from the double normals setting with two coincident normals.

We next consider the following definitions for tripod configurations of $C^2$ closed plane curves.

\begin{Def}\label{angle}
Given a closed plane curve $\gamma$, a \emph{tripod configuration} of $\gamma$ consists of three lines normal to the curve all coincident at a single point and together forming three angles of $2\pi/3$.
\end{Def}

\begin{Def}\label{vector}
Given a closed plane curve $\gamma$, a \emph{tripod configuration} of $\gamma$ consists of three lines normal to the curve all coincident at a single point such that the sum of the three unit normal vectors (oriented according to the curve orientation) associated to the three normal lines is $0$.
\end{Def}

\begin{figure}[H]
\centering
\subfigure[Definitions \ref{angle} and \ref{vector}]{\includegraphics[scale=0.2]{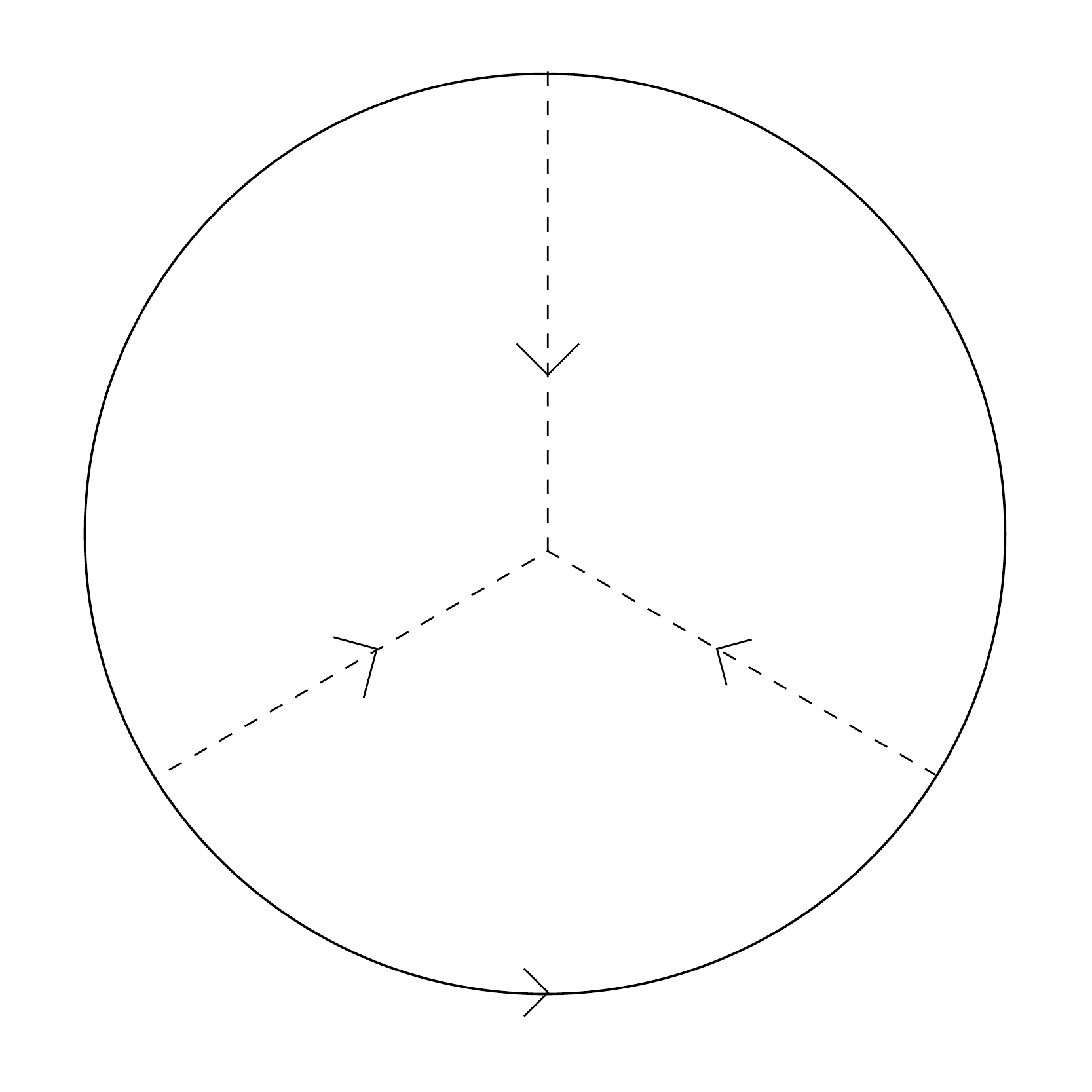}\label{def1}}
\quad
\subfigure[Definition \ref{angle}]{\includegraphics[scale=0.2]{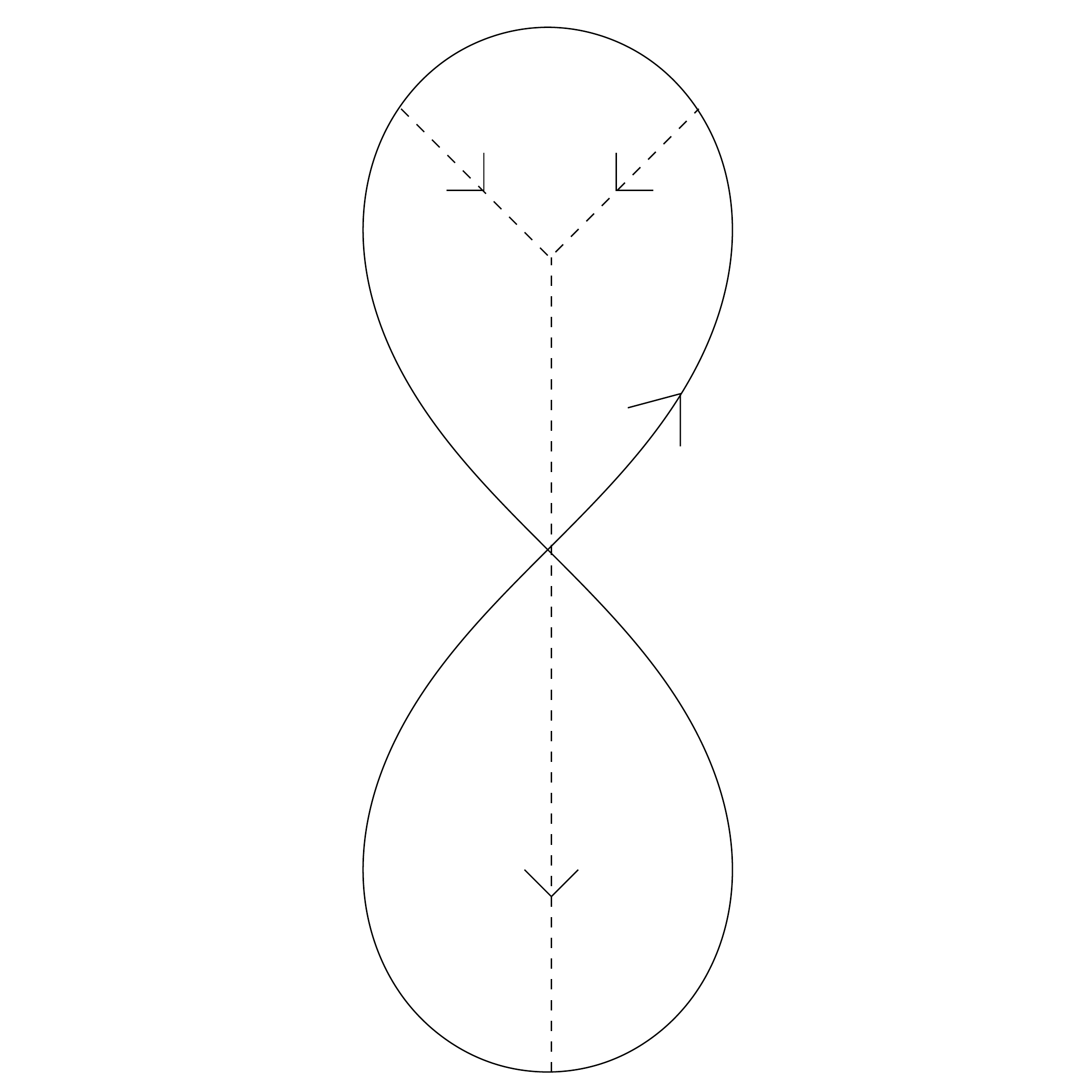}\label{def2}}
\quad
\subfigure[Definitions \ref{angle} and \ref{vector}]{\includegraphics[scale=0.2]{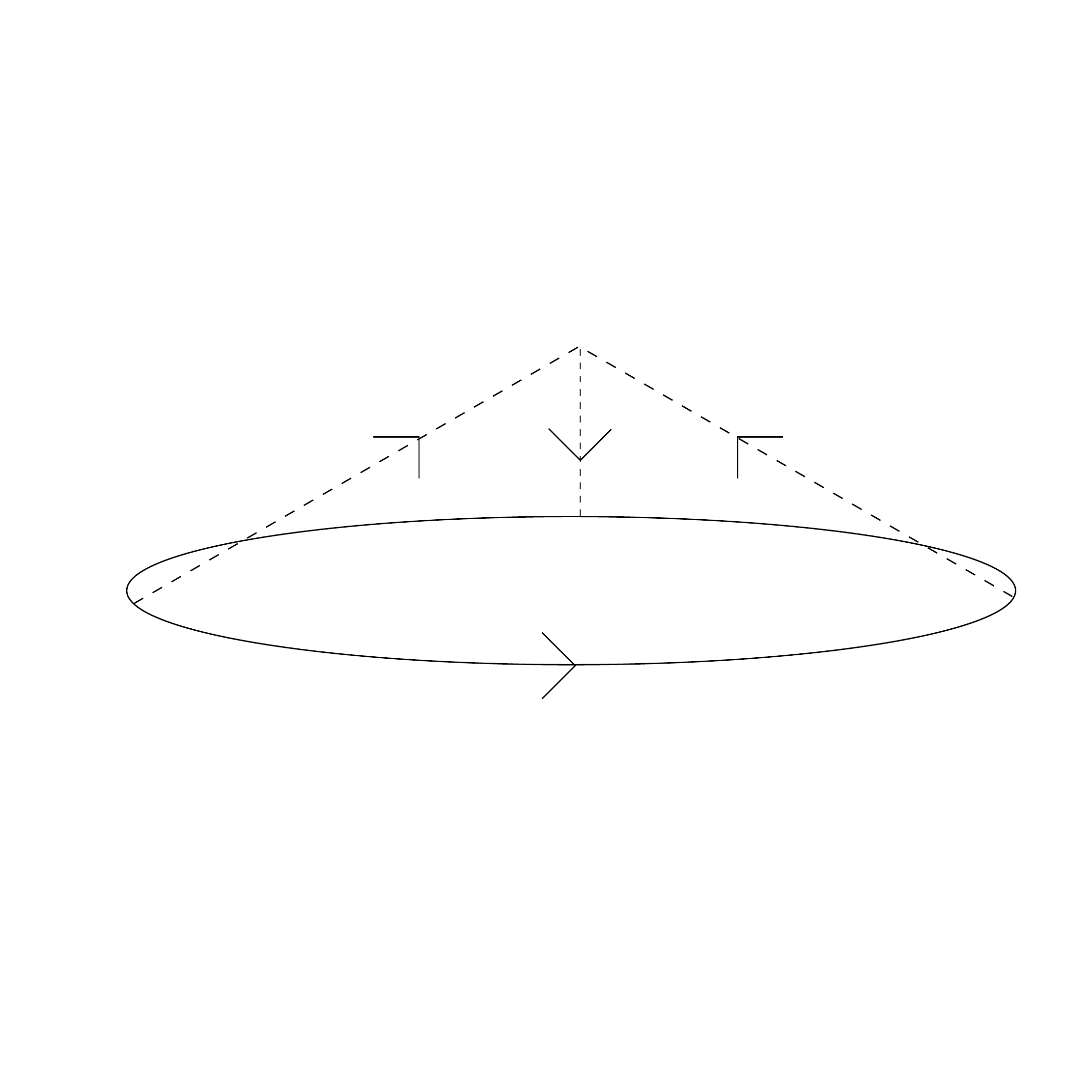}\label{def3}}
\caption{Tripod configurations}\label{tripod}
\end{figure}

Definition \ref{angle} is more general than Definition \ref{vector}; in Figure \ref{tripod}, each subfigure is labeled with the definitions it satisfies. In both \cite{ST} and \cite{KW} Definition \ref{angle} is the one explicitly stated and motivated as discussed above, while the stronger Definition \ref{vector} is the one used in the proofs of the theorems. In proving our results below we will use the original Definition \ref{angle}.

Finally, for convenience, we fix the following definition.

\begin{Def}\label{tripodpoint}
A \textit{tripod point} is the point at which three lines in a tripod configuration intersect.
\end{Def}

In particular, a single tripod point may be associated with many, even infinitely many, tripod configurations, as in the case of a circle. 

\section{Prior results and statement of theorems}\label{PR}

In this section we present the main theorems of \cite{ST} and \cite{KW}, and then list the five results we prove in this paper. We omit the proof of the lower bound estimate in Theorem \ref{local} from \cite{KW}, since in Theorem \ref{bound} we will give a precise count for what is estimated there.

\begin{Thm}[Tabachnikov \cite{ST}]\label{convex}
Every smooth ($C^2$) closed convex plane curve has at least two tripod configurations.
\end{Thm}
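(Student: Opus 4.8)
The plan is to parametrize the convex curve $\gamma$ by its outward unit normal direction and translate the existence of a tripod configuration into a zero-counting problem for a single periodic function built from the support function. Write $u(\theta) = (\cos\theta, \sin\theta)$ and $u'(\theta) = (-\sin\theta, \cos\theta)$, and let $h(\theta)$ be the support function of $\gamma$, so that the boundary point with outward normal $u(\theta)$ is $\gamma(\theta) = h(\theta)\,u(\theta) + h'(\theta)\,u'(\theta)$. The line normal to $\gamma$ at $\gamma(\theta)$ is then $N(\theta) = \{x \in \mb{R}^2 : x \cdot u'(\theta) = h'(\theta)\}$. Since $\gamma$ is $C^2$ and strictly convex its Gauss map is a diffeomorphism and $h$ is $C^1$, so this gives a clean bijective normal parametrization by $\theta \in S^1$.

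A tripod configuration consists of three normals whose directions are equally spaced, i.e.\ the lines $N(\theta)$, $N(\theta + 2\pi/3)$, $N(\theta + 4\pi/3)$; these automatically meet in angles of $2\pi/3$ (and their unit normals sum to zero, so the stronger Definition \ref{vector} holds as well), so the only condition to impose is that the three lines be concurrent. First I would write down the $3\times 3$ concurrency determinant for these three lines, using $u'(\theta + 2\pi k/3)$ as the direction rows and $h'(\theta + 2\pi k/3)$ as the inhomogeneous terms. The $2\times 2$ minors are all equal to $\sin(2\pi/3)$ up to the alternating cofactor sign, so the cofactor expansion collapses to a constant multiple of
$$f(\theta) := h'(\theta) + h'(\theta + \tfrac{2\pi}{3}) + h'(\theta + \tfrac{4\pi}{3}).$$
Thus tripod configurations correspond exactly to zeros of $f$.

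It remains to count the zeros of $f$. The function $f$ is continuous, and it is $2\pi/3$-periodic, since replacing $\theta$ by $\theta + 2\pi/3$ cyclically permutes the three summands. Moreover $\int_0^{2\pi/3} f\, d\theta = \tfrac{1}{3}\int_0^{2\pi} f\, d\theta = \int_0^{2\pi} h'\, d\theta = 0$ because $h$ is periodic. A continuous function on a circle with mean zero either vanishes identically or changes sign, and in the latter case the number of sign changes around the circle is even, hence at least two, giving at least two zeros. Passing to the quotient circle of circumference $2\pi/3$ is what guarantees that distinct zeros yield distinct configurations (a single configuration corresponds to the orbit $\{\theta, \theta + 2\pi/3, \theta + 4\pi/3\}$), so $f$ produces at least two distinct tripod configurations, and infinitely many in the degenerate case $f \equiv 0$, as for a circle.

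The computation reducing concurrency to the symmetric condition $f = 0$ is the crux, and it is pleasantly clean; I expect the main obstacle to be the analytic bookkeeping rather than the geometry. Specifically, one must justify the regularity and bijectivity of the normal parametrization (which is cleanest for strictly convex, positively curved $C^2$ curves) and handle the $2\pi/3$-periodicity carefully, so that the two guaranteed zeros of $f$ are genuinely two configurations and not two representatives of the same orbit. Once $f = 0$ is identified as the governing equation, the existence of two configurations is a soft consequence of the mean-zero property rather than anything specific to the particular curve.
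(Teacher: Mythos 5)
Your proposal is correct and is essentially the paper's own proof: your $f(\theta)=h'(\theta)+h'(\theta+2\pi/3)+h'(\theta+4\pi/3)$ is exactly the paper's $p(\alpha)+p(\alpha+2\pi/3)+p(\alpha+4\pi/3)$ with $p=q'$ the derivative of the support function, and both arguments then extract two zeros from the $2\pi/3$-periodicity. The only cosmetic differences are that you establish the concurrency criterion via the $3\times 3$ line determinant where the paper uses the area formula $\frac{1}{\sqrt{3}}\left(\sum p\right)^2$ for the triangle bounded by the three normals, and you count zeros via the mean-zero property of $f$ where the paper takes the maximum and minimum of the periodic antiderivative $q(\alpha)+q(\alpha+2\pi/3)+q(\alpha+4\pi/3)$.
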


\begin{proof}
Let $\gamma(s)$ be an arc length parametrization of the curve, and let $t(s)=\gamma'(s)$, $n(s)=\gamma''(s)/|\gamma''(s)|$,  the tangent and normal unit vectors to the curve at $\gamma(s)$, respectively. Further let $\alpha(s)$ be the angle made by $\gamma'(s)$ with some fixed direction. We may also parametrize the curve by $\alpha$, so that $\gamma(\alpha),t(\alpha),n(\alpha)$ are in analogy to the above. Define

\begin{align*}
q(\alpha)&=\gamma(\alpha)\cdot n(\alpha),
\\
p(\alpha)&=\frac{d}{d\alpha}q(\alpha)=-\gamma(\alpha)\cdot t(\alpha).
\end{align*}

Let $\ell(\alpha)$ and $\bar{\ell}(\alpha)$ denote the normal and tangent lines to the curve at $\gamma(\alpha)$, respectively. Then the equilateral triangle bounded by the normal lines $\ell(\alpha),\ell(\alpha+2\pi/3),\ell(\alpha+4\pi/3)$ has area

\begin{align*}
\frac{1}{\sqrt{3}}(p(\alpha)+p(\alpha+2\pi/3)+p(\alpha+4\pi/3))^2,
\end{align*}

and the equilateral triangle bounded by the tangent lines $\bar{\ell}(\alpha),\bar{\ell}(\alpha+2\pi/3),\bar{\ell}(\alpha+4\pi/3)$ has area

\begin{align*}
\frac{1}{\sqrt{3}}(q(\alpha)+q(\alpha+2\pi/3)+q(\alpha+4\pi/3))^2.
\end{align*}

Thus a tripod configuration is achieved by $\ell(\alpha),\ell(\alpha+2\pi/3),\ell(\alpha+4\pi/3)$ exactly when $p(\alpha)+p(\alpha+2\pi/3)+p(\alpha+4\pi/3)=0$, which happens at least twice since $q(\alpha)+q(\alpha+2\pi/3)+q(\alpha+4\pi/3)$ is periodic and attains a maximum and minimum.

\end{proof}

\begin{Rem}
In the proof above, $\bar{\ell}(\alpha),\bar{\ell}(\alpha+2\pi/3),\bar{\ell}(\alpha+4\pi/3)$ form an equilateral triangle that circumscribes the curve $\gamma$, and $\alpha$ at which this triangle attains a local maximum or minimum area are such that $\ell(\alpha),\ell(\alpha+2\pi/3),\ell(\alpha+4\pi/3)$ form tripod configurations of the curve.

The functions $q(\alpha)$ and $p(\alpha)$ defined in the proof above both have useful geometric interpretations: $q(\alpha)$ is the ``support function,'' which for a convex closed curve $\gamma$ represents the distance from an origin chosen inside the region enclosed by the curve to the (unique) tangent line on $\gamma$ in the positive $\alpha$ direction, and $p(\alpha)$ is the distance from the origin to the normal line to $\gamma$ associated with the tangent line of $q(\alpha)$.

The support function $q(\alpha)$ may in fact be used to define the original closed convex curve $\gamma$. And using $q(\alpha)$ we may easily define curves equidistant to $\gamma$-that is, curves of the form $\gamma_r(s)=\gamma(s)+rn(s)$, by defining the corresponding support function $q_r(\alpha)=q(\alpha)+r$. From this property it follows that $\gamma$ has the same tripod configurations as its convex equidistant curves; in fact it is easy to see that this holds regardless of whether the equidistant curves are convex or not, allowing for wavefronts with co-orientation, despite cuspidal singularities.
\end{Rem}

\begin{figure}[H]  
  \centering  
  \def\svgwidth{0.3\columnwidth}  
  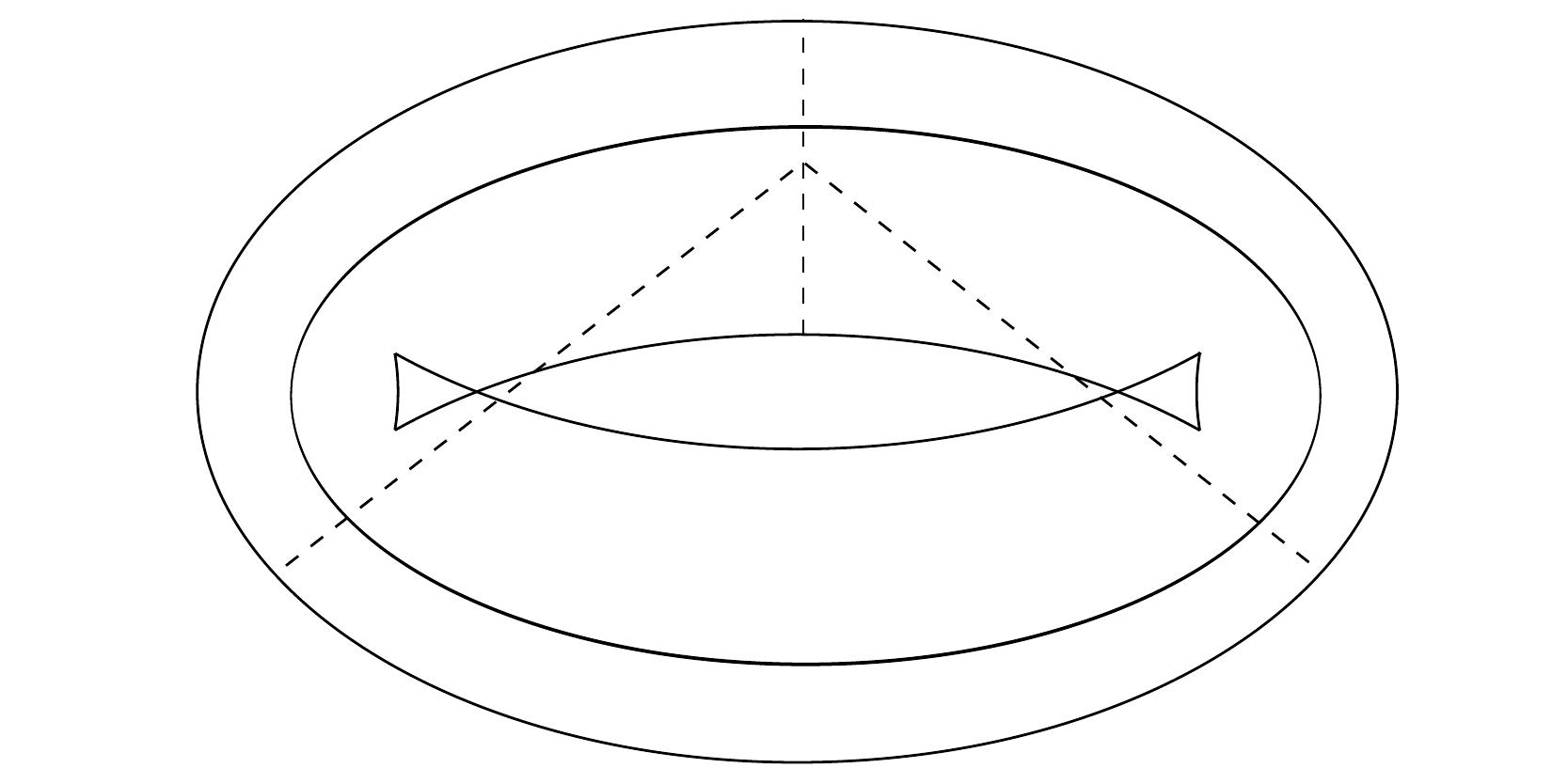
  \caption{Equidistant curves share normal lines and tripod configurations}  
\end{figure}

\begin{Rem}
When $q(\alpha)+q(\alpha+2\pi/3)+q(\alpha+4\pi/3)$ is constant, or equivalently when the equilateral triangles circumscribing $\gamma$ defined by the tangent lines at $\gamma(\alpha),\gamma(\alpha+2\pi/3),\gamma(\alpha+4\pi/3)$ have constant area, then by the proof of Theorem \ref{convex} every line normal to $\gamma$ belongs to a tripod configuration of $\gamma$. Properties of these closed curves, called $\Delta$-curves, which may be rotated freely inside an equilateral triangle in contact with all its sides, may be found in \cite{YB}. 
\end{Rem}

\begin{Thm}[Kao and Wang \cite{KW}]\label{local}
A smooth ($C^2$) closed locally convex plane curve with rotation index $n$ has at least $n^2/3$ tripod configurations.
\end{Thm}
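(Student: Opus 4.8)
The plan is to generalize Tabachnikov's support-function argument from the proof of Theorem \ref{convex} to the multi-sheeted setting forced by a higher rotation index, and then to count configurations by organizing them according to how the three contact points are distributed among the sheets of the tangent-direction map.

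First I would exploit local convexity: since the curvature never vanishes, the tangent angle $\alpha$ is a strictly monotone function of arc length, so the curve may be reparametrized by $\alpha$ ranging over $\mathbb{R}/2\pi n\mathbb{Z}$, with the tangent direction equal to $\alpha \bmod 2\pi$. As in Theorem \ref{convex} I set $q(\alpha)=\gamma(\alpha)\cdot n(\alpha)$ and $p(\alpha)=q'(\alpha)=-\gamma(\alpha)\cdot t(\alpha)$; now $q,p$ are $2\pi n$-periodic and, crucially, $\int_0^{2\pi n}p\,d\alpha = q(2\pi n)-q(0)=0$, so $p$ has vanishing mean over the full period. Next I would record the concurrency criterion. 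The normal line at the point with tangent angle $\alpha$ is $\{x : x\cdot t(\alpha)=-p(\alpha)\}$. A tripod configuration is a triple of contact points whose tangent directions are $\beta,\beta+2\pi/3,\beta+4\pi/3 \pmod{2\pi}$, so that the three normals make angles $2\pi/3$; because the corresponding tangent vectors are three equally spaced unit vectors summing to zero, the three normal lines are concurrent exactly when the sum of the three values of $p$ vanishes. This is the direct analogue of the vanishing of Tabachnikov's triangle-area expression, but now valid sheet-by-sheet.

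The new feature for rotation index $n$ is that each tangent direction is attained $n$ times, so the three contact points may sit on any of the sheets. I would encode this by two indices $a,b\in\mathbb{Z}/n\mathbb{Z}$ recording the sheet gaps and, for each pair, form
\[
H_{a,b}(\alpha)=p(\alpha)+p\!\left(\alpha+\tfrac{2\pi}{3}+2\pi a\right)+p\!\left(\alpha+\tfrac{4\pi}{3}+2\pi(a+b)\right),
\]
a $2\pi n$-periodic function whose mean is $3\int p=0$; hence each of these $n^2$ functions has at least one zero. Each zero is an ordered triple of contact points giving a genuine tripod (the three tangent directions are distinct, so the points are distinct and the normals are non-parallel), and zeros belonging to distinct $(a,b)$ are distinct ordered triples. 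Finally the cyclic group $\mathbb{Z}/3$ permuting the three contact points acts freely on the set of all such ordered solution-triples, since a triple can never equal its nontrivial cyclic shift; each unordered tripod configuration is then exactly one orbit, and dividing the at-least-$n^2$ ordered solutions by the orbit size $3$ yields at least $n^2/3$ tripod configurations.

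I expect the main obstacle to be the bookkeeping of the configuration space: setting up the sheet indices $(a,b)$ so that every admissible triple of contact points is counted once and only once, verifying that the cyclic symmetry acts freely and that each tripod corresponds to precisely three ordered solutions, and confirming that no solutions are lost or double-counted across the $n^2$ functions $H_{a,b}$. Degenerate cases---for instance an $H_{a,b}$ vanishing identically, which only produces additional configurations---must be noted but cause no difficulty for the stated lower bound. I would also remark that the zero-mean argument in fact gives at least two zeros per $H_{a,b}$, so the method already suggests a bound stronger than $n^2/3$, consistent with the sharper count to be established in Theorem \ref{bound}.
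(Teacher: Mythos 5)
Your proposal is correct, and its analytic skeleton is the same as the paper's: your $H_{a,b}$ are exactly the paper's $P_{i,j,k}$ with $i=0$, $j=1+3a$, $k=2+3(a+b)$ (indices taken mod $3n$), so both proofs reduce to finding zeros of sums of three values of $p$ at tangent directions $2\pi/3$ apart, distributed over the $n$ sheets. Where you genuinely diverge is in the two bookkeeping steps. For existence of zeros, the paper works with the $2\pi n$-periodic antiderivative $q(\alpha)+q(\alpha+2\pi j/3)+q(\alpha+2\pi k/3)$ and uses its maximum and minimum, getting two zeros per function; you instead integrate, noting $\int_0^{2\pi n}p\,d\alpha=q(2\pi n)-q(0)=0$, getting one zero per function (your closing remark recovers the second), and your concurrency criterion --- the lines $\{x : x\cdot t_i=-p_i\}$ with $t_1+t_2+t_3=0$ are concurrent iff $p_1+p_2+p_3=0$ --- is a clean linear-algebra substitute for the paper's equilateral-triangle-area identity. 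For the count, the paper counts equivalence classes of the functions $P_{i,j,k}$ under index shifts, citing \cite{KW} for the bound of $n^2/6$ classes and multiplying by two; you instead count ordered solution triples (at least $n^2$, since zeros at distinct $(a,b)$ or distinct $\alpha$ give distinct triples) and quotient by the free $\mathbb{Z}/3$ cyclic action, correctly observing that exactly the three cyclic orderings of any configuration arise as zeros (the odd orderings have angular increments $-2\pi/3$ and are never zeros of any $H_{a,b}$), so the correspondence is exactly $3$-to-$1$. Your orbit count is self-contained and avoids the class-counting lemma of \cite{KW} altogether, which is a real simplification for the stated bound $n^2/3$. One caution on your final remark: with two zeros per function your method yields $2n^2/3$, which is slightly weaker than Theorem \ref{bound}'s $2\lceil\frac{n^2+2}{3}\rceil$; the loss occurs at the singleton orbit (the class $\{0,n,2n\}$ when $3\nmid n$), whose shift-invariant function contributes two distinct configurations in the class-based count but gets averaged down by your global division by $3$.
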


\begin{proof}
Let $\gamma$ be a smooth closed locally convex plane curve with rotation index $n$. We may parametrize $\gamma$ by the angle $\alpha$ which $\gamma'$ makes with a fixed direction for $\alpha\pmod{2\pi n}$. Let $q$, $p$ be as defined in the proof of Theorem \ref{convex}. Define

\begin{align*}
P_{i,j,k}(\alpha)=p(\alpha+2\pi i/3)+p(\alpha+2\pi j/3)+p(\alpha+2\pi k/3),
\end{align*}

where $i,j,k\in\{0,1,\ldots,3n\}$ and $\{i,j,k\}\equiv\{0,1,2\}\pmod{3}$. By the same argument as in the proof of Theorem \ref{convex}, the critical points of the functions $P_{i,j,k}$ correspond to tripod configurations of $\gamma$, and the minimum number of tripod configurations is then twice the number of $P_{i,j,k}$ distinct. The problem becomes to count the number of distinct classes of such $P_{i,j,k}$ with $i,j,k\in\{0,1,\ldots,3n\}$ and $\{i,j,k\}\equiv\{0,1,2\}\pmod{3}$ given by the relation $P_{i,j,k}\sim P_{i',j',k'}$ whenever $\{i,j,k\}\equiv\{i'+m,j'+m,k'+m\}$ for some $m\in\mb{Z}$. See \cite{KW} for the proof of the lower bound $n^2/6$ on the number of such distinct classes.
\end{proof}

We conclude by stating the results we will prove in the remainder of this paper. The last two theorems require some further definitions which will be discussed in their respective sections.

\begin{Thm*}[Theorem \ref{bound}]
A smooth ($C^2$) closed locally convex plane curve with rotation index $n$ has at least $2\lceil \frac{n^2+2}{3} \rceil$ tripod configurations, where $\lceil \cdot \rceil$ denotes the greatest integer or ceiling function.
\end{Thm*}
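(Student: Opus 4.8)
The plan is to work entirely within the framework set up in the proof of Theorem \ref{local}, reducing the problem to a purely combinatorial count of orbits. Parametrizing $\gamma$ by the tangent angle $\alpha \in \mb{R}/2\pi n\mb{Z}$, a tripod configuration is exactly a zero of one of the functions
\[
P_{i,j,k}(\alpha) = p(\alpha + 2\pi i/3) + p(\alpha + 2\pi j/3) + p(\alpha + 2\pi k/3),
\]
where $\{i,j,k\}$ is a $3$-element subset of $\mb{Z}/3n\mb{Z}$ meeting each residue class modulo $3$ exactly once. Since $\int_0^{2\pi n} p\, d\alpha = 0$, each $P_{i,j,k}$ has mean zero over its period, hence changes sign and so has at least two zeros; and the identity $P_{i-m,j-m,k-m}(\alpha) = P_{i,j,k}(\alpha - 2\pi m/3)$ shows that translating $\{i,j,k\}$ by $m$ merely reparametrizes the same function and yields the same tripods. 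First I would verify carefully that distinct translation-classes of triples give genuinely distinct tripod configurations and that each class contributes at least two of them, so that the number of tripods is at least twice the number of such classes. The theorem then reduces to an exact count of these classes.

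Next I would count the classes by Burnside's lemma applied to the cyclic group $\mb{Z}/3n\mb{Z}$ acting on the admissible triples by simultaneous translation. There are exactly $n^3$ admissible triples (choose one representative from each of the three residue classes, each of size $n$), so the identity contributes $|\mathrm{Fix}(0)| = n^3$. For a nonzero translation $m$ to fix a triple it must permute its three elements without fixing any, i.e.\ act as a $3$-cycle; this forces $3m \equiv 0 \pmod{3n}$, so $m \in \{n, 2n\}$, and the fixed triples are precisely the threefold-symmetric ones $\{i, i+n, i+2n\}$. Counting these fixed points and assembling the Burnside average is the computational heart of the argument.

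The delicate point—and where the case distinction enters—is whether the symmetric triples $\{i, i+n, i+2n\}$ are admissible. When $3 \nmid n$ the elements $i, i+n, i+2n$ occupy distinct residue classes modulo $3$, so they form $n$ valid triples and each of $m = n, 2n$ contributes $n$ fixed points, giving $\tfrac{1}{3n}(n^3 + 2n) = \tfrac{n^2+2}{3}$ classes; since $3 \nmid n$ this is an integer equal to $\lceil \tfrac{n^2+2}{3}\rceil$, and doubling gives the stated bound directly. When $3 \mid n$, however, $i, i+n, i+2n$ all lie in a single residue class, so these configurations degenerate into triples of parallel normals and drop out of the naive orbit count. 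I expect the main obstacle to be exactly this divisible case: recovering the full bound $\lceil \tfrac{n^2+2}{3}\rceil$ then demands a separate and careful analysis of the critical points associated with these near-degenerate symmetric configurations, rather than simply discarding them. This accounting, rather than the Burnside computation itself, is the step I anticipate will require the most care, and it is what forces the ceiling in place of an exact integer quotient.
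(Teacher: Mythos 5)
Your counting argument is, in substance, the same as the paper's proof: the paper likewise reduces to counting translation classes of admissible triples and doubles the count, the only difference being bookkeeping. Where you apply Burnside's lemma to the action of $\mb{Z}/3n\mb{Z}$ on the $n^3$ admissible sets, the paper instead normalizes each triple to the form $\{0,j,k\}$ with $j\equiv 1$, $k\equiv 2\pmod 3$ (there are $n^2$ such sets), observes that each class contains at most the three normalized forms $\{0,j,k\}$, $\{0,k-j,-j\}$, $\{0,-k,j-k\}$, and checks that coincidences occur exactly for the set $\{0,n,2n\}$ when $3\nmid n$ and never when $3\mid n$. Your fixed-point analysis ($3m\equiv 0 \pmod{3n}$ forcing $m\in\{n,2n\}$ with the $n$ symmetric sets $\{i,i+n,i+2n\}$ as fixed points) is precisely the same dichotomy, and your yields $\frac{1}{3n}(n^3+2n)=\frac{n^2+2}{3}$ for $3\nmid n$ and $\frac{n^3}{3n}=\frac{n^2}{3}$ for $3\mid n$, matching the paper's counts exactly. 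Your mean-zero argument for two zeros of each $P_{i,j,k}$ (from $p=dq/d\alpha$ with $q$ periodic of period $2\pi n$) is equivalent to the paper's observation that the periodic function $q(\alpha)+q(\alpha+2\pi j/3)+q(\alpha+2\pi k/3)$ attains a maximum and a minimum. The distinctness verification you defer in your first paragraph is likewise left implicit in the paper, so you are operating at the same level of rigor there.

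The one place you diverge is the case $3\mid n$, and the obstacle you anticipate there is a phantom. The paper does exactly what you call ``simply discarding'' the symmetric triples: when $3\mid n$ all three elements of $\{i,i+n,i+2n\}$ lie in a single residue class mod $3$, so the three associated normal directions coincide mod $2\pi$ (parallel normals) and can never form angles of $2\pi/3$; there is no reservoir of near-degenerate configurations to recover, and the paper makes no attempt to recover one. Consequently the paper's proof establishes $2\cdot\frac{n^2}{3}$ configurations when $3\mid n$, which is $2\bigl\lfloor\frac{n^2+2}{3}\bigr\rfloor$, not $2\bigl\lceil\frac{n^2+2}{3}\bigr\rceil=\frac{2n^2}{3}+2$. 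The theorem's phrase ``greatest integer or ceiling function'' is self-contradictory (these differ exactly when $3\mid n$), and only the greatest-integer (floor) reading is supported by the proof; the two readings agree precisely when $3\nmid n$, which is why your Burnside computation closes the non-divisible case with no slack. So your proposal as written already contains the full content of the paper's argument: it proves the floor version of the bound in both cases, and the extra analysis you flag as ``the step requiring the most care'' would be needed only for the literal ceiling bound, which the paper does not establish either.
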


\begin{Thm*}[Theorem \ref{TNIT}]
Given a smooth ($C^2$) closed plane curve $\gamma$ and three angles $\theta_1, \theta_2, \theta_3$, such that $\theta_1 + \theta_2 + \theta_3 = 2\pi$ and $\theta_1, \theta_2, \theta_3 < \pi$, there exist three normals to $\gamma$ intersecting at a single point and forming angles $\theta_1, \theta_2,$ and $\theta_3$. 
\end{Thm*}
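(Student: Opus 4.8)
The plan is to realize the common intersection point (the \emph{tripod point}) as a weighted Fermat--Toricelli point, extending the unweighted picture described in the introduction, and then to produce it as a critical point of the associated weighted-distance functional. First I would convert the prescribed angles into weights. Given three unit vectors $u_1,u_2,u_3$ issuing from a common point and enclosing consecutive angles $\theta_1,\theta_2,\theta_3$, the force-balance relation $\sum_i w_i u_i = 0$ holds for a triple of weights that is unique up to scaling: placing the vectors $w_i u_i$ head to tail produces a triangle whose exterior angles are the $\theta_i$, hence whose interior angles are $\pi-\theta_i$, so by the law of sines the weights are positive multiples of $\sin\theta_1,\sin\theta_2,\sin\theta_3$ in the appropriate order. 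Because each $\theta_i<\pi$, every weight is strictly positive and the force triangle is nondegenerate; conversely, for these fixed weights any balanced triple of unit vectors encloses exactly the angles $\theta_1,\theta_2,\theta_3$ up to reflection. This is the step where the hypotheses $\theta_i<\pi$ and $\sum_i\theta_i=2\pi$ enter, the latter being exactly what makes the exterior angles of a genuine triangle.

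Next I would parametrize $\gamma\colon S^1\to\mb R^2$ and define on the compact space $(S^1)^3$ the functional
\[ F(s_1,s_2,s_3)=\min_{P\in\mb R^2}\sum_{i=1}^3 w_i\,|\gamma(s_i)-P|, \]
whose value is attained at the weighted Fermat point $P^\ast$ of the triangle $\gamma(s_1)\gamma(s_2)\gamma(s_3)$. When $P^\ast$ is strictly interior, the optimality condition is precisely the balance $\sum_i w_i u_i=0$ with $u_i=(\gamma(s_i)-P^\ast)/|\gamma(s_i)-P^\ast|$, so by the previous paragraph the three legs already enclose the required angles. Differentiating $F$ and invoking the envelope theorem, the terms coming from the motion of $P^\ast$ cancel against the balance condition and leave
\[ \partial_{s_i}F = w_i\,u_i\cdot\gamma'(s_i). \]
Hence at an interior critical point of $F$ one has $u_i\cdot\gamma'(s_i)=0$ for each $i$, i.e.\ each leg $P^\ast\gamma(s_i)$ is normal to $\gamma$, and the configuration is exactly a triple normal of the prescribed type. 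Since $F$ is continuous on a compact domain it attains a maximum and a minimum, so critical points exist for free.

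The hard part will be guaranteeing that such a critical point lies in the \emph{good stratum}: the three feet must be distinct (away from the diagonals $s_i=s_j$) and $P^\ast$ must be interior rather than sitting at a vertex $\gamma(s_i)$ or at a collinear configuration, since on the bad strata the normality conclusion breaks down and $F$ is only one-sidedly differentiable. Total collapse is easy to exclude, as near the full diagonal all distances and hence $F$ are small, so the maximum is positive and bounded away from it; but the partial diagonals and the vertex locus require more care. I would examine the global maximum of $F$, arguing that maximizing the weighted distance sum forces the feet apart and makes it wasteful to drive any leg length to zero (a zero leg discards its positive weight $w_i$), which should place the maximizer in the interior stratum; on a partial diagonal one computes $F=w_k\,|\gamma(s_i)-\gamma(s_j)|$ with the leftover weight, and one must check this is beaten by a genuinely spread configuration. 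If the maximum cannot be shown to avoid the vertex locus directly, the fallback is a min--max (Lusternik--Schnirelmann type) argument over the topology of the good stratum, using that along its boundary the one-sided variation of $F$ points inward. This nondegeneracy analysis, rather than the variational identity itself, is where essentially all the work lies, and it is again exactly here that the restriction $\theta_i<\pi$ is indispensable.
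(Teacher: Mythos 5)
Your reduction of the angle data to weights is correct and your first variation is sound: with $w_i=\sin\theta_i$ (all positive precisely because each $\theta_i\in(0,\pi)$, which your hypotheses force), the balance $\sum_i w_iu_i=0$ characterizes rays at the prescribed angles, and at an interior critical point of $F$ with nondegenerate minimizer $P^\ast$ the formula $\partial_{s_i}F=w_i\,u_i\cdot\gamma'(s_i)$ gives normality. But the step you defer --- placing an extremum in the good stratum --- is a genuine gap, and your two proposed remedies both fail at the same locus. When $P^\ast$ sits at a vertex $\gamma(s_i)$ with \emph{strict} slack, i.e. $\bigl|w_ju_j+w_ku_k\bigr|<w_i$, the functional is locally \emph{smooth}, being locally equal to $w_j\,|\gamma(s_j)-\gamma(s_i)|+w_k\,|\gamma(s_k)-\gamma(s_i)|$; nonsmoothness occurs only on the transition locus $\bigl|w_ju_j+w_ku_k\bigr|=w_i$. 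Consequently such a configuration can be an honest local, even global, maximum of $F$, and its criticality yields only two chords normal to $\gamma$ at their far ends with resultant normal at $\gamma(s_i)$ --- a degenerate object, not the desired triple normal. So ``discarding the weight $w_i$ is wasteful'' is not an argument, and your fallback premise that the one-sided variation points inward along the boundary of the good stratum is false exactly where it is needed. The competition is also quantitatively tight: for a thin ellipse with semi-axes $a\gg b$ (unweighted case) the best vertex-degenerate value is $2\sqrt{a^2+b^2}\approx 2a+b^2/a$ while the genuine tripod value is $\approx 2a+\tfrac{3b^2}{2a}$, so the tripod wins only at second order; for lopsided weights and for the immersed, nonconvex curves covered by the theorem no such comparison is evident. (Note also that the global minimum of $F$ is $0$ on the full diagonal, so only the maximum or a min--max is available, and on partial diagonals $s_i=s_j$ one has $F=w_k\,|\gamma(s_i)-\gamma(s_k)|$ by the strict triangle inequality on the weights, which must likewise be beaten.)

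It is instructive to compare with the paper's proof of Theorem \ref{TNIT}, which is in effect the dual of yours but has no bad strata. The paper sets $\tau_i=\pi-\theta_i$, so the $\tau_i$ are the angles of a nondegenerate triangle, and maximizes the \emph{area} of a triangle of this fixed shape circumscribing $\gamma$; the family of triangles in the similarity class touching $\gamma$ on all three sides is compact, the extremal triangle automatically has contact points $A,B,C$, and Proposition \ref{antipedalprop} (a maximal-area circumscribing triangle in a fixed class is antipedal to the contact triangle with respect to a $\tau_1,\tau_2,\tau_3$-center) converts extremality directly into concurrence of the three normals at the prescribed angles. The generalized Viviani identity $\sum_i \sin\theta_i\, d_i=\mathrm{const}$ for distances $d_i$ to the sides shows that your functional, evaluated at the contact triangle, is exactly the size of the circumscribing triangle --- so you are solving the dual problem, but in your formulation the degenerate configurations survive as smooth critical points, whereas in the primal triangle picture they simply do not appear. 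To rescue your route you would have to prove that the maximum of $F$ strictly dominates every vertex-degenerate and diagonal value for an arbitrary $C^2$ closed (possibly self-intersecting) curve, which appears essentially as hard as the theorem itself; the efficient fix is to pass to the circumscribing-triangle formulation as the paper does.
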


\begin{Cor*}[Corollary \ref{pln}]
A smooth ($C^2$) closed plane curve has at least one tripod configuration. In particular, immersed plane curves with self intersection also possess at least one tripod configuration.
\end{Cor*}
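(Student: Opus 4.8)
The plan is to derive this corollary directly from Theorem \ref{TNIT} by specializing the three angles to the symmetric case. A tripod configuration in the sense of Definition \ref{angle} is precisely a triple of normals coincident at a point forming three angles of $2\pi/3$, so I would set $\theta_1 = \theta_2 = \theta_3 = 2\pi/3$ and check that this choice meets the hypotheses of Theorem \ref{TNIT}. The sum condition holds, since $3 \cdot (2\pi/3) = 2\pi$, and each angle satisfies $2\pi/3 < \pi$. Theorem \ref{TNIT} then furnishes three normals to $\gamma$ meeting at a single point and forming angles $2\pi/3, 2\pi/3, 2\pi/3$, which is exactly a tripod configuration. This gives the first assertion.

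For the ``in particular'' clause, I would emphasize the generality already built into Theorem \ref{TNIT}: its statement assumes only that $\gamma$ is a smooth ($C^2$) closed plane curve, with no hypothesis of convexity, local convexity, or embeddedness. Immersed closed curves with self-intersections are themselves $C^2$ closed plane curves, so the specialization above applies to them verbatim, and the existence of at least one tripod configuration follows with no additional argument.

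There is no genuine obstacle at this stage, since all the analytic content is carried by Theorem \ref{TNIT}; the only point worth flagging is that the symmetric choice of angles is admissible (each strictly less than $\pi$, a condition that in fact forces no two of the normals to be antiparallel) and that Definition \ref{angle}, rather than the stronger Definition \ref{vector}, is the relevant notion, so the angular conclusion of Theorem \ref{TNIT} suffices without any need to control the signed normal vectors. I would therefore present this as an immediate consequence, reserving all the work for the proof of Theorem \ref{TNIT} itself.
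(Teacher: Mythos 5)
Your proposal is correct and is exactly the paper's argument: the paper likewise obtains Corollary \ref{pln} by setting $\theta_1=\theta_2=\theta_3=2\pi/3$ in Theorem \ref{TNIT}, whose hypotheses require only a $C^2$ closed plane curve (hence cover immersed curves with self-intersections). Your verification of the hypotheses and the remark that Definition \ref{angle} is the relevant notion are accurate and add nothing beyond what the paper intends.
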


\begin{Thm*}[Theorem \ref{sphyp}]
Every smooth ($C^2$) closed curve sufficiently close to a circle (excluding great circles on the sphere) in the spherical or hyperbolic geometry has at least two tripod configurations.
\end{Thm*}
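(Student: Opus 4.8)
The plan is to realize tripod configurations in all three geometries as the critical points of a single natural functional, and then to argue by perturbation from the circle, where the critical set is an entire circle's worth of configurations.

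First I would set up a variational characterization valid in all three geometries. Fixing the ambient space with geodesic distance $d$ and an arc length parametrization $\gamma(s)$ of the convex curve, I consider on the configuration space $\mathcal C=(S^1)^3\times\Omega$ (three foot points on $\gamma$ together with an auxiliary point $P$ ranging over a neighborhood $\Omega$ of the circle's center) the total distance functional
\[
E(s_1,s_2,s_3,P)=\sum_{i=1}^3 d\bigl(P,\gamma(s_i)\bigr).
\]
Computing the first variation, the $P$-derivative is the sum of the three unit vectors at $P$ along the geodesics $\overline{P\gamma(s_i)}$, and each $s_i$-derivative is $-\langle v_i,\gamma'(s_i)\rangle$, where $v_i$ is the unit vector at $\gamma(s_i)$ pointing toward $P$. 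Hence a critical point with distinct feet and $P\neq\gamma(s_i)$ is exactly a configuration in which each $\overline{P\gamma(s_i)}$ meets $\gamma$ orthogonally (so the three geodesics are concurrent normals) and the three unit vectors at $P$ sum to zero. This realizes Definition \ref{vector}, and in particular a tripod in the sense of Definition \ref{angle}, the Fermat--Toricelli angle condition emerging automatically from $\nabla_P E=0$.

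Next I would locate the unperturbed critical set. For a circle $C_0$ of geodesic radius $r$ centered at $O$ (with $r\neq\pi/2$ on the sphere, so that $C_0$ is not a great circle), symmetry shows that the configurations with feet at angular positions $\phi,\phi+2\pi/3,\phi+4\pi/3$ and $P=O$ are all critical, so the critical set contains a circle $Z_0\cong S^1$ parametrized by the rotation angle $\phi$. The crucial structural input is that $Z_0$ is a \emph{nondegenerate} (Morse--Bott) critical manifold of $E_{C_0}$: the Hessian of $E_{C_0}$ is nondegenerate on the $4$-dimensional space transverse to $Z_0$. By the rotational and reflective symmetry of the circle the transverse Hessian block-diagonalizes, and I would reduce the claim to checking that a short list of eigenvalues---expressed through the geodesic trigonometric functions of $r$---are nonzero, the exclusion of the great circle being exactly what fails this test. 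I expect \textbf{this transverse nondegeneracy computation to be the main obstacle}, since it is the one place where the spherical or hyperbolic geometry enters quantitatively rather than formally.

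Finally I would run the perturbation argument. If $\gamma$ is $C^2$-close to $C_0$ then $E_\gamma$ is $C^2$-close to $E_{C_0}$ on the relevant compact region (the feet stay near the $2\pi/3$-spaced positions and $P$ near $O$, away from the diagonal and from $\gamma$). Using invertibility of the transverse Hessian along $Z_0$, a Lyapunov--Schmidt reduction lets me solve the transverse equations $\nabla_w E_\gamma=0$ for a unique $w=w_\gamma(\phi)$ depending smoothly on $\phi\in S^1$ (uniformly in $\phi$ by compactness). The reduced function $f_\gamma(\phi)=E_\gamma\bigl(\phi,w_\gamma(\phi)\bigr)$ is smooth on $S^1$, and because the reduction preserves the gradient structure its critical points correspond exactly to the critical points of $E_\gamma$ near $Z_0$. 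A smooth function on $S^1$ has at least two critical points, a maximum and a minimum, and each such critical point is a genuine tripod configuration of $\gamma$ by the variational characterization. This yields the desired lower bound of two, recovering the count of Theorem \ref{convex} in the flat limit.
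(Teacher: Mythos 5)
Your proposal is correct in outline but takes a genuinely different route from the paper. Both arguments start from the same functional (your $E$ is the paper's tripod functional $f$, the sum of the three geodesic distances from the feet to the candidate tripod point; compare Proposition \ref{critprop}), but diverge completely afterwards. The paper compactifies: it places the feet on an outer parallel curve $\gamma_\epsilon$ and the point $p$ in the closed region $R$ bounded by $\gamma$, and runs Laudenbach's Morse theory for manifolds with boundary (Theorem \ref{MT}) on $\mathcal{P}_\gamma=\gamma_\epsilon\times\gamma_\epsilon\times\gamma_\epsilon\times R$. It classifies the type $D$ boundary critical points (these sit over diameters of $\gamma$, in the two configurations of Figure \ref{typeDcases}; closeness to a circle enters there, through the evolute lying strictly inside $\gamma$), computes their Morse indices by principal-minor calculations, and compares the Morse polynomial $\mathcal{M}_f^D$ with the Poincar\'e polynomial $(1+t)^3$ to force interior critical points of at least two distinct indices, hence two distinct tripods. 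You instead localize: Morse--Bott nondegeneracy of the critical circle $Z_0$ over the round circle, Lyapunov--Schmidt reduction to $S^1$, and min/max. Your route avoids boundary Morse theory and the paper's type-$D$ index computations entirely and makes the perturbative nature of the statement transparent; the paper's machinery buys a framework that could in principle reach curves far from circles (its Conjectures \ref{sphypconj} and \ref{tripodconfigconj}) if the boundary critical points can be controlled there.

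Two caveats on the step you flag as the main obstacle. First, the transverse nondegeneracy does hold, but not with the structure you predict: since the distance from the center $O$ to the circle is constant, $E$ is constant on the entire slice $\{P=O\}$, so the full $3\times 3$ feet--feet block of the Hessian vanishes along $Z_0$. Nondegeneracy comes from the mixed feet--$P$ block $B$, whose three columns are the tangent directions at the three feet scaled by $1/r$, $1/\sin r$, or $1/\sinh r$; one checks $B$ is injective on $P$-variations and $\ker B^T$ is exactly the tangent line to $Z_0$, while the $P$--$P$ block is $\tfrac{3}{2}\kappa\,I$ with $\kappa=1/r,\ \cot r,\ \coth r$. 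The transverse form is thus a zero block hyperbolically coupled to a definite block (signature of type $(2,2)$), not a diagonal eigenvalue list --- and notably it does \emph{not} degenerate at the great circle ($\cot(\pi/2)=0$ only kills the $P$--$P$ block, which the kernel computation never used). The great-circle exclusion is needed for different reasons: a great circle bounds no distinguished disk, the ``center'' is ambiguous (both poles), and in the paper the computations are confined to a single hemisphere; so your parenthetical prediction about where the exclusion enters is off, though this does not damage the proof of the stated theorem. Second, when you extract two configurations from the reduced function $f_\gamma$ on $S^1$, note that a single geometric tripod reappears at the three $\phi$-values given by cyclic relabeling, all with the same critical value; hence either $\max f_\gamma>\min f_\gamma$, in which case the maximizer and minimizer are genuinely distinct tripods, or $f_\gamma$ is constant and there are infinitely many. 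With these repairs your plan goes through.
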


\begin{Thm*}[Theorem \ref{regpoly}]
A regular $n$-vertex polygon has $n$ tripod configurations if $3\nmid n$ and has $n/3$ tripod configurations if $3\mid n$.
\end{Thm*}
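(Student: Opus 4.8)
The plan is to mirror the support-function proof of Theorem~\ref{convex}, adapting it to the piecewise-smooth boundary of the polygon. I parametrize the polygon $P_n$ by its outward normal direction $\psi$: writing $\beta_k=\beta_0+2\pi k/n$ for the $n$ edge-normal directions, the supporting point is the vertex $V_k$ for all $\psi$ in the open arc $(\beta_k,\beta_{k+1})$, while $\psi=\beta_k$ is the single direction realizing the edge $E_k$. Keeping Tabachnikov's functions $q(\psi)$ (the support function) and $p(\psi)=q'(\psi)$, the identical computation from Theorem~\ref{convex} shows that three normal lines in directions $\psi,\psi+2\pi/3,\psi+4\pi/3$ are concurrent exactly when
\[
F(\psi):=p(\psi)+p(\psi+2\pi/3)+p(\psi+4\pi/3)=0,
\]
since the three normal covectors sum to zero and the concurrency determinant collapses to $F(\psi)$ times a nonzero factor. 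Hence I reduce the theorem to counting the zeros of $F$ on a period $[0,2\pi/3)$ that come from honest vertex normals; the configurations in which the three feet slide freely along an edge form the non-isolated families excluded from the count.

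Next I would make $p$ explicit. On the arc $(\beta_k,\beta_{k+1})$ one computes $p(\psi)=-R\sin(\psi-\beta_k-\pi/n)$, where $R$ is the circumradius, so that on each arc $p$ is strictly decreasing, falls from $+R\sin(\pi/n)$ to $-R\sin(\pi/n)$, and then jumps \emph{up} by $2R\sin(\pi/n)$ as $\psi$ crosses an edge direction $\beta_k$ (the jump recording the one-parameter family of normals to $E_k$). It follows that $F$ is strictly decreasing on every maximal arc on which none of $\psi,\psi+2\pi/3,\psi+4\pi/3$ equals an edge direction, and that $F$ jumps upward at each such \emph{edge event}.

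The crux is to evaluate $F$ across an edge event. When $\psi\to\beta_k$ with the first slot transitioning, I use the reflection of $P_n$ across the symmetry axis through the midpoint of $E_k$: it fixes the edge direction $\beta_k$, interchanges the directions $\beta_k+2\pi/3$ and $\beta_k+4\pi/3$, and, since it preserves the support function, sends $p(\theta)$ to $-p(2\beta_k-\theta)$. Consequently the two spectator terms cancel, $p(\beta_k+2\pi/3)+p(\beta_k+4\pi/3)=0$, and $F$ equals the single transitioning term, jumping from $-R\sin(\pi/n)$ to $+R\sin(\pi/n)$; the same holds for events arising from the other two slots. Therefore on each smooth arc between consecutive events $F$ decreases strictly from $+R\sin(\pi/n)$ to $-R\sin(\pi/n)$, and so has exactly one zero.

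Finally I count arcs. The $3n$ event directions $\beta_k+2\pi j/3$ $(0\le k<n,\ 0\le j<3)$ are pairwise distinct modulo $2\pi$ precisely when $3\nmid n$; in that case $[0,2\pi/3)$ contains $n$ events and hence $n$ arcs, giving $n$ zeros of $F$ and $n$ tripod configurations. When $3\mid n$ the shift $2\pi/3$ is a multiple of the period $2\pi/n$ of $p$, so all three slots reach edge directions simultaneously, the events coalesce into $n/3$ triple events, and between them $F=3p$; each of the $n/3$ arcs then carries the single zero $p=0$, namely the normal running from the center through a vertex, so there are $n/3$ tripod configurations. I expect the main obstacle to be the event bookkeeping: proving cleanly that the spectator terms cancel at every event, that each smooth zero produces a bona fide tripod (three vertex feet, outward rays at $2\pi/3$, meeting inside $P_n$), and that the edge-interior families are exactly the degenerate locus one must discard so that the counts $n$ and $n/3$ are sharp.
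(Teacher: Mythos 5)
Your proposal is correct, and it takes a genuinely different route from the paper's proof. The paper argues in two explicit steps: it first exhibits the configurations by testing the ``isoceles'' vertex triples $v_0,v_k,v_{n-k}$, computing the Fermat--Toricelli angles and reducing the support-line condition to $|120n-360k|<180$ (in degrees); it then proves uniqueness by rotating a circumscribing equilateral triangle about $Q$ and checking, in the three cases $n=3m$, $n=3m+1$, $n=3m+2$, which vertex triples can occur as contact points, discarding the spurious candidates by explicit angle comparisons such as $60/(3m+1)$ versus $300/(3m+1)$. You replace all of this casework with piecewise support-function calculus: the concurrency criterion $F(\psi)=p(\psi)+p(\psi+2\pi/3)+p(\psi+4\pi/3)=0$ (the same identity underlying Theorem \ref{convex}, and equivalent to the paper's degenerate circumscribing-equilateral-triangle picture), the explicit formula $p(\psi)=-R\sin(\psi-\beta_k-\pi/n)$ with strict monotonicity on each vertex arc, upward jumps of $2R\sin(\pi/n)$ at edge normals, and the reflection-symmetry cancellation of the spectator terms at each event --- which is valid exactly when $3\nmid n$, since only then are $\beta_k\pm 2\pi/3$ not themselves edge normals. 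This pins $F$ between one-sided limits $\mp R\sin(\pi/n)$ on every arc, forcing exactly one zero per arc, and the dichotomy $n$ versus $n/3$ falls out of $\gcd(3,n)$: events are distinct, or coalesce into triple events between which $F=3p$ by the $2\pi/n$-periodicity of $p$, with the zero at the vertex-normal direction recovering the paper's central configuration $v_0,v_m,v_{2m}$. Your approach buys a uniform, nearly computation-free count that localizes each configuration in its own arc and explains the trichotomy conceptually; the paper's buys an explicit elementary identification of the configurations as isoceles triples. The residual bookkeeping you flag is genuinely routine given your lemmas: zeros lie in open arcs, so the three feet are distinct vertices satisfying the strict support-line inequality the paper imposes; and at an event your cancellation shows the two spectator normals meet on the symmetry axis of $E_k$, so the only concurrent ``third line'' there is the perpendicular through the edge midpoint, which passes through no vertex and is rightly discarded, while the two admissible endpoint-vertex lines fail concurrency precisely because the one-sided limits $\pm R\sin(\pi/n)$ of $F$ are nonzero.
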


\section{An improved lower bound for Theorem \ref{local}}\label{loca}
In this section we count the number of distinct functions $P_{i,j,k}$ described in the proof of Theorem \ref{local} above to establish an improved lower bound over that in Theorem \ref{loca}.

\begin{Thm}\label{bound}
A smooth ($C^2$) closed locally convex plane curve with rotation index $n$ has at least $2\lceil \frac{n^2+2}{3} \rceil$ tripod configurations, where $\lceil \cdot \rceil$ denotes the greatest integer or ceiling function.
\end{Thm}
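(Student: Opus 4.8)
The plan is to carry the reduction already set up in the proof of Theorem \ref{local} as far as it goes, and then to replace the estimate of \cite{KW} by an \emph{exact} count of the equivalence classes of the functions $P_{i,j,k}$. Recall that a tripod configuration of the required type corresponds to a zero of some $P_{i,j,k}(\alpha)=p(\alpha+2\pi i/3)+p(\alpha+2\pi j/3)+p(\alpha+2\pi k/3)$ with $\{i,j,k\}\equiv\{0,1,2\}\pmod 3$, equivalently to a critical point of its antiderivative $q(\alpha+2\pi i/3)+q(\alpha+2\pi j/3)+q(\alpha+2\pi k/3)$, which is periodic and hence has at least two critical points per period. Since $p$ has period $2\pi n$, the indices matter only modulo $3n$; and since the translation $(i,j,k)\mapsto(i+m,j+m,k+m)$ merely reparametrizes $\alpha$ and so reproduces the same triples of points, distinct configurations are counted, at least two per class, by the number $N$ of orbits of the translation action of $\mathbb{Z}/3n$ on the $3$-subsets $\{i,j,k\}\subseteq\mathbb{Z}/3n$ having one element in each residue class modulo $3$. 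The theorem thus reduces to evaluating $N$.

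To compute $N$ I would encode an orbit by its cyclic gap sequence. Arranging the three indices cyclically around $\mathbb{Z}/3n$ produces gaps $(g_1,g_2,g_3)$ with $g_1+g_2+g_3=3n$ and each $g_r\ge 1$, well defined up to cyclic rotation, and the requirement that the three residues modulo $3$ be distinct is precisely the condition $g_r\not\equiv 0\pmod 3$ for every $r$. Counting ordered compositions first, the constraint $\sum g_r\equiv 0\pmod 3$ with each $g_r\equiv 1$ or $2$ forces all three $g_r$ to be congruent to $1$, or all three congruent to $2$, modulo $3$; writing $g_r=3h_r+1$ or $g_r=3h_r+2$ and using stars and bars gives $\binom{n+1}{2}+\binom{n}{2}=n^2$ ordered gap triples. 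Passing to cyclic classes by Burnside's lemma for the $\mathbb{Z}/3$ rotation, the only rotation-fixed triple is the equilateral one $(n,n,n)$, which is admissible exactly when $n\not\equiv 0\pmod 3$. Hence $N=(n^2+2)/3$ when $3\nmid n$ and $N=n^2/3$ when $3\mid n$.

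When $3\nmid n$ this already finishes the proof, since then $(n^2+2)/3$ is an integer equal to $\lceil (n^2+2)/3\rceil$, and the two-per-class bound yields the claimed $2\lceil (n^2+2)/3\rceil$ configurations. The delicate case, and what I expect to be the main obstacle, is $3\mid n$: here the equilateral type $(n,n,n)$—three points spaced by $2\pi n/3$ in tangent angle—is exactly the one lost, because $3\mid n$ makes those three tangent directions coincide modulo $2\pi$, so the three normals become parallel and form no tripod. Class counting then delivers only $2n^2/3$ configurations, two short of the stated bound. To close this gap I would argue that when $3\mid n$ some admissible class must carry at least four critical points rather than two; concretely, I would track the critical points of the area function across the family of classes whose gaps approach $(n,n,n)$ and use a degree or min--max argument to show that the two configurations ``carried away'' by the excluded equilateral type survive, in perturbed form, among the neighboring near-equilateral classes. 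Establishing this persistence rigorously—and checking that the two extra critical points are genuinely new rather than already among those counted—is the crux; the rest is the bookkeeping above.
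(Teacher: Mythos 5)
Your combinatorial core is correct and is, modulo packaging, the paper's own argument. The paper fixes a representative with $i=0$ and notes that each equivalence class contains at most three representatives $\{0,j,k\}$, the sole smaller class being $\{0,n,2n\}$, which satisfies the residue condition exactly when $3\nmid n$; out of the $n^2$ representatives this gives $(n^2+2)/3$ classes when $3\nmid n$ and $n^2/3$ classes when $3\mid n$. Your gap-sequence encoding with Burnside's lemma reproduces exactly these numbers (your fixed-point analysis of the rotation action is the same observation about the equilateral set $\{0,n,2n\}$ in different clothing), and your geometric explanation of why the type $(n,n,n)$ is inadmissible for $3\mid n$ — the three tangent directions coincide mod $2\pi$, so the normals are parallel — is accurate. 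Up to this point you have a complete and correct proof of the bound the paper actually establishes.

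The genuine gap is your final paragraph, and it stems from a misreading of the bound rather than from the mathematics. The theorem's wording is internally inconsistent — it writes $\lceil\cdot\rceil$ but glosses it as ``the greatest integer or ceiling function'' — and the paper's proof unambiguously delivers the floor value $2\lfloor (n^2+2)/3\rfloor$, namely $2(n^2+2)/3$ for $3\nmid n$ and $2n^2/3$ for $3\mid n$ (the two readings coincide except in the case you flag, and the paper itself states that sharpness beyond $n=1$ is unknown). So the ``two missing configurations'' you set out to recover are not part of what the paper proves, and your count already matches its result in both cases. More importantly, the persistence argument you propose could not be made rigorous even if the ceiling reading were intended: the equivalence classes form a \emph{discrete} set, each carrying its own fixed function $P_{i,j,k}$ of the single variable $\alpha$, and gap vectors are integer triples, so there is no continuation parameter along which ``gaps approach $(n,n,n)$'' and no family of functions in which critical points of the excluded equilateral type could persist into neighboring classes. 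Nothing prevents a curve from having every antiderivative $q(\alpha+2\pi i/3)+q(\alpha+2\pi j/3)+q(\alpha+2\pi k/3)$ attain exactly one maximum and one minimum per period, so no degree or min--max argument over the discrete index set can force a fourth critical point in any class. If you intend the floor reading, delete the last paragraph and your proof is complete; if you insist on the ceiling reading for $3\mid n$, you would be attempting to prove something strictly stronger than the paper does, and the sketched mechanism cannot do it.
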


\begin{proof}
Counting the number of distinct functions $P_{i,j,k}$ reduces to counting the number of distinct classes of $\{i,j,k\}\subset\{0,1,\ldots,3n-1\}$ with $\{i,j,k\}\equiv\{0,1,2\}\pmod{3}$ under the relation $\{i,j,k\}\equiv\{i',j',k'\}$ whenever $\{i,j,k\}=\{i'+m,j'+m,k'+m\}\pmod{3n}$ for some $m\in\mb{Z}$.

To count these equivalence classes, we may consider only those $\{i,j,k\}$ as specified above with $i=0$, $j\equiv 1\pmod{3}$, and $k\equiv 2\pmod{3}$, and count the distinct number of these modulo the given equivalence relation; indeed, given $\{i,j,k\}$ with $i,j,k$ being $0,1,2$ modulo $3$ respectively, we have $\{i,j,k\}\equiv\{i-i,j-i,k-i\}$, where $i-i=0$, $j-i\equiv 1\pmod{3}$, and $k-i\equiv 2\pmod{3}$. And each such $\{0,j,k\}$ is equivalent to at most two others of the same form; in general the following three are equivalent: $\{0,j,k\},\{0,k-j,-j\},\{0,-k,j-k\}$. Whether any of these might be pairwise equivalent occurs only for $\{0,n,2n\}$ if $3\nmid n$, and never if $3|n$ (because then all of $0,n,2n$ are divisible by $3$ so that $\{0,n,2n\}$ is not one of the sets under consideration). Accounting for the duplicates out of the $n^2$ possible sets $\{0,j,k\}$ satisfying the desired conditions gives the result.
\end{proof}

\begin{Rem}
The case $n=1$ is exactly the case of smooth closed convex plane curves addressed in Theorem \ref{convex}; the only function of the form $P_{i,j,k}$ is $P_{0,1,2}$, and there are at least two tripod configurations. The lower bound given in Theorem \ref{bound} is sharp in the case $n=1$ \cite{KW}, but it is unknown whether it is sharp in general. This improved lower bound also extends to the setting of the existence of tripod configurations for co-orientable wave fronts with total rotation $2\pi n$ by considering equidistant curves of locally convex curves, as mentioned earlier.
\end{Rem}

\section{The Triple Normal Intersection Theorem}\label{TTNIT}

We begin by giving a definition of a generalization of the first isogonic center from classical geometry. The first isogonic center of a triangle $\triangle ABC$ is constructed as follows: take a circle about each of $\overline{AB}, \overline{BC},$ and $\overline{CA}$ such that each line segment cuts a chord with angular measure $4\pi/3$ from its corresponding circle, as shown in Figure \ref{FirstIsogonicCenter}. Then these circles all intersect at a point $I_1$, which is called the first isogonic center of $\triangle ABC$.

\begin{figure}[H]  
  \centering  
  \def\svgwidth{0.3\columnwidth}  
  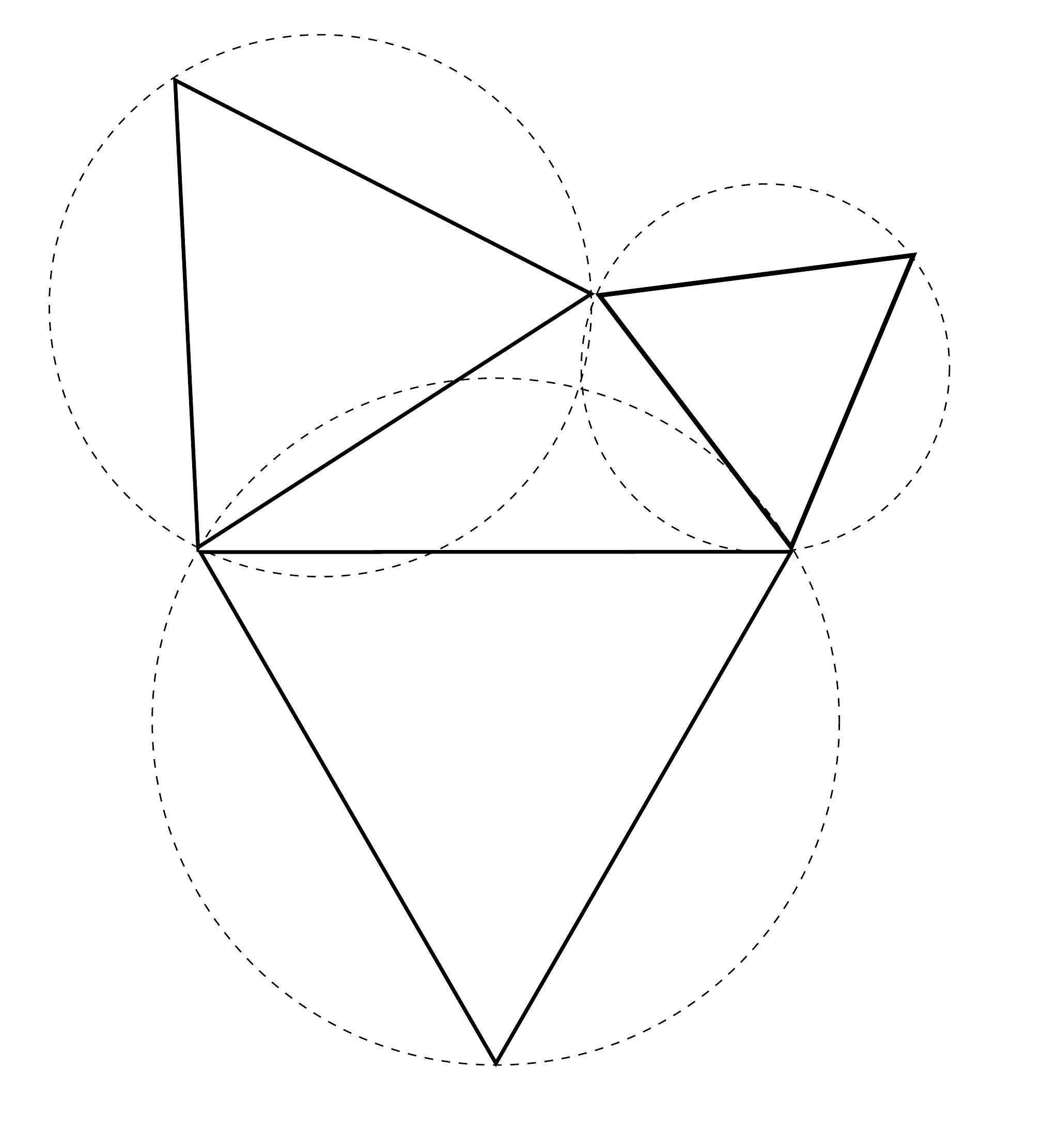
  \caption{The first isogonic center $I_1$ of $\triangle ABC$}
  \label{FirstIsogonicCenter}
\end{figure}

Let $\tau_1, \tau_2, \tau_3 \in [0, 2\pi)$ such that $\tau_1 + \tau_2 + \tau_3 = \pi$. Then we define the $\tau_1, \tau_2, \tau_3 -$centers of $\triangle ABC$ as follows:

\begin{Def}[$\tau_1, \tau_2, \tau_3 -$centers]\label{tcenterdef}
A $\tau_1, \tau_2, \tau_3 -$center of $\triangle ABC$ is constructed by forming circles around each of the chords $\overline{AB}$, $\overline{BC}$, and $\overline{CA}$ such that these chords cut arcs on the circles of measures $2\tau_1,2\tau_2$, and $2\tau_3$ lying on the same sides of the chords as points $C,A$, and $B$, respectively, as shown in Figure \ref{ABCwithArcs}. These circles will intersect at a point, and this point is a $\tau_1, \tau_2, \tau_3 -$center of $\triangle ABC$. Note that this point is only unique for fixed orderings of $A, B, C$.
\end{Def}

We'll now prove that the three circles described above intersect at a single point.

\begin{proof}
Construct three circles such that $\overline{AB}$, $\overline{BC}$, and $\overline{AC}$ form chords defining arcs of measure $2\tau_1$, $2\tau_2$, and $2\tau_3$, respectively.

Suppose the circles with chords $\overline{AB}$ and $\overline{BC}$ intersect at distinct points $P$ and $B$. Without loss of generality, we may assume that this intersection occurs in three cases. Either $P$ is closer to the chord $\overline{AC}$ than $B$, $P$ is the same point as $B$, or $P$ is farther away from $\overline{AC}$ than $B$.

\begin{figure}[H]  
  \centering  
  \subfigure[$P$ closer than $B$]{\def\svgwidth{0.3\columnwidth}  
  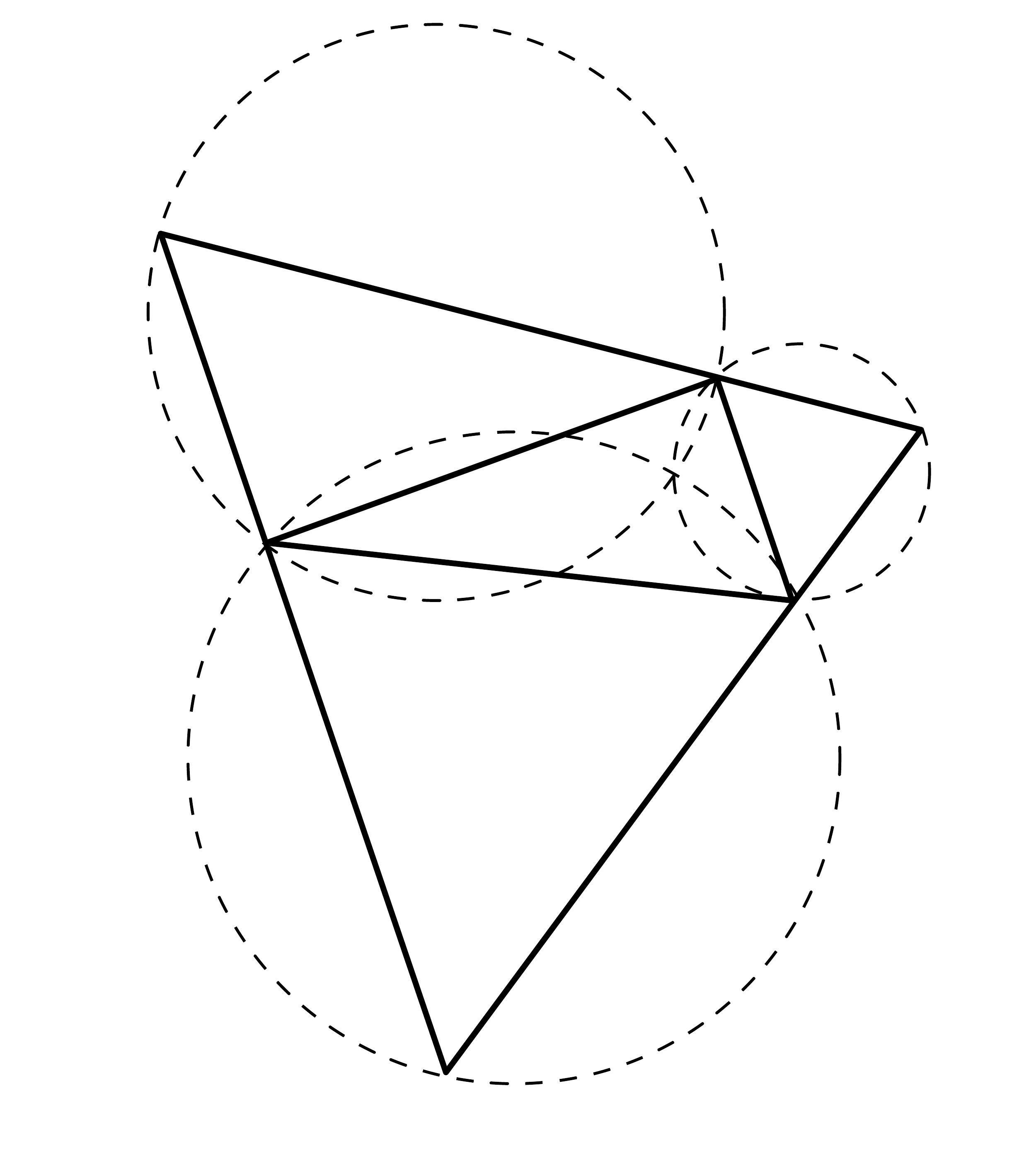}
  \quad
  \subfigure[$P = B$]{\def\svgwidth{0.3\columnwidth}  
  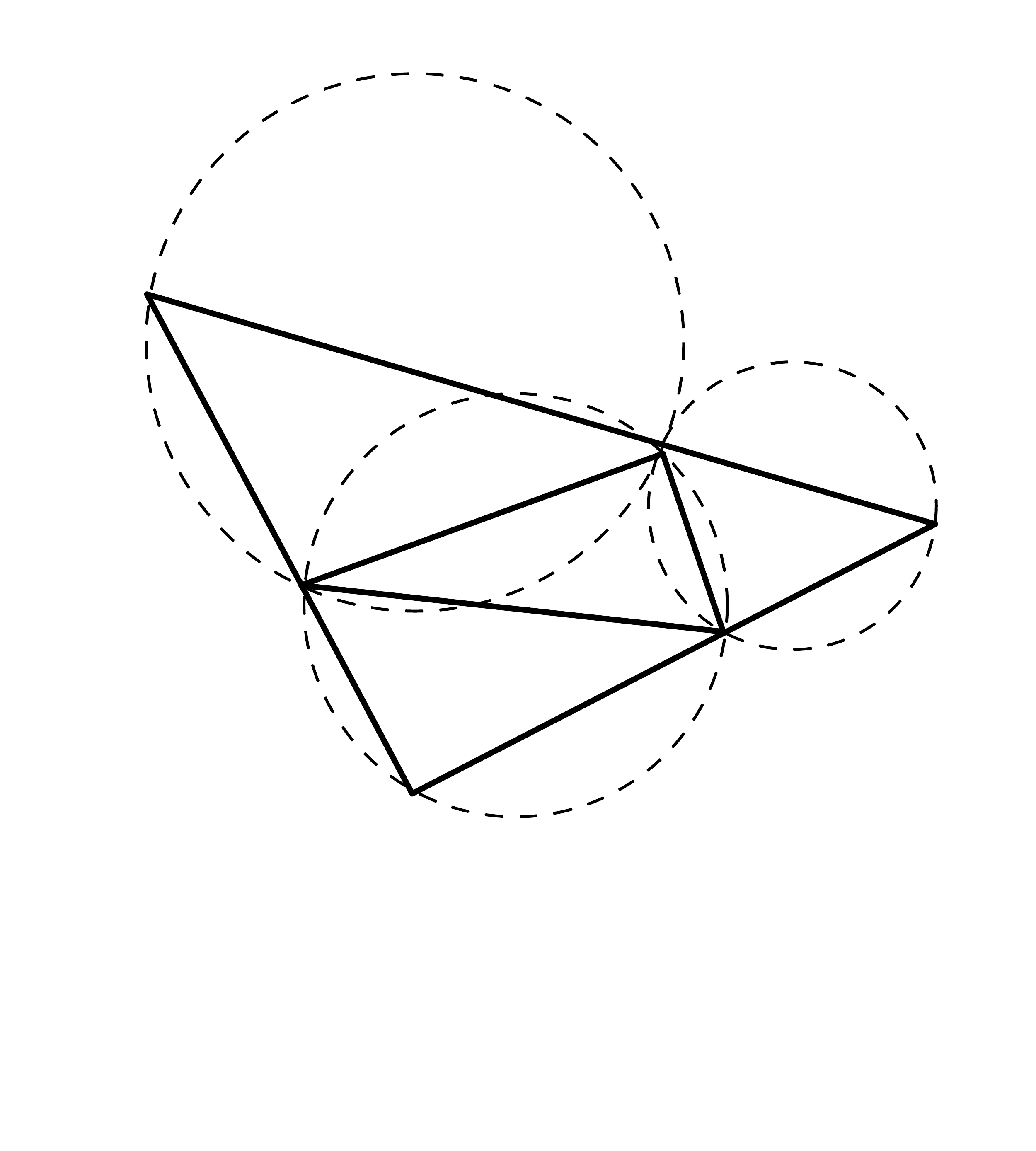}
  \quad
  \subfigure[$P$ farther than $B$]{\def\svgwidth{0.3\columnwidth}  
  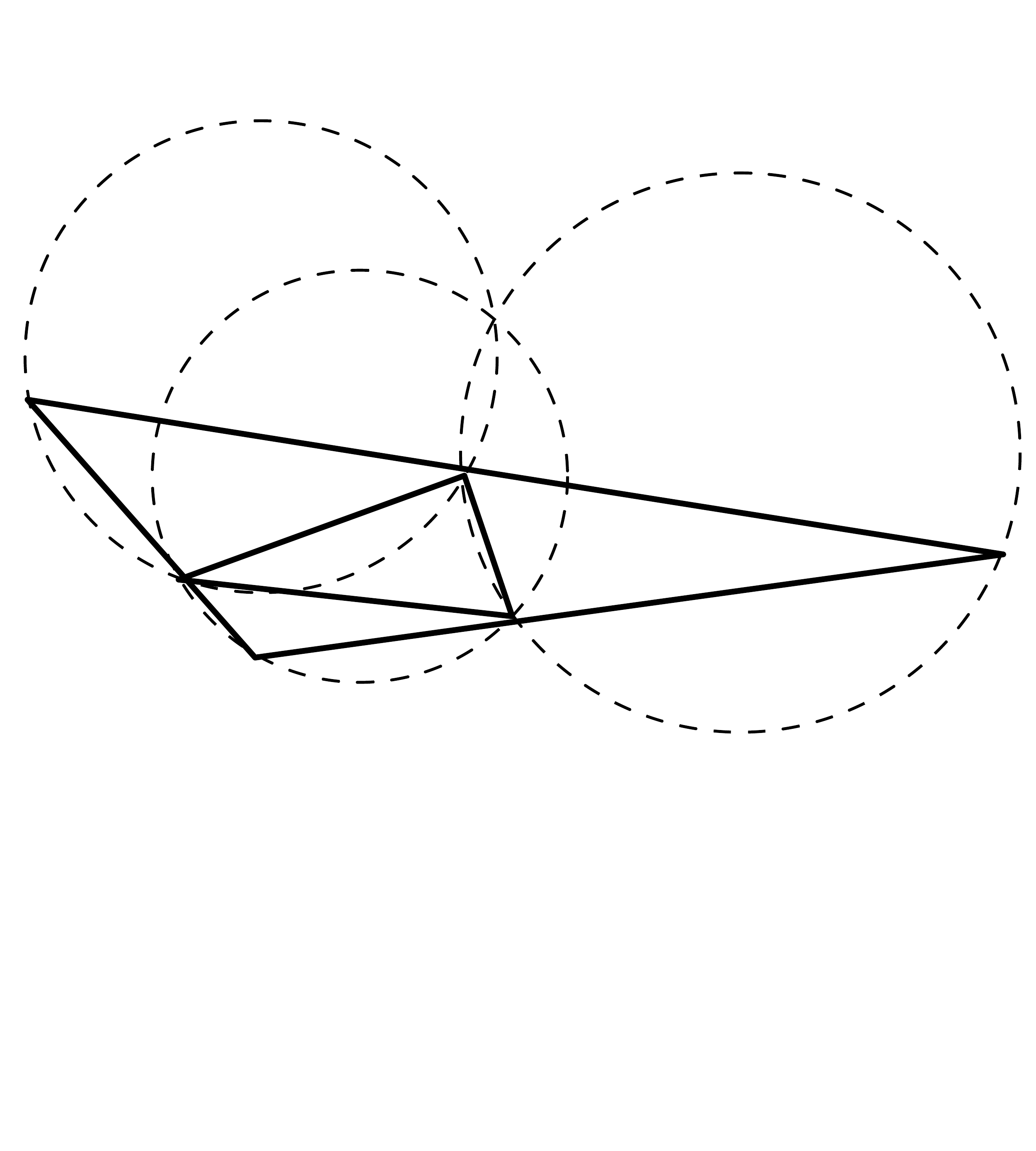}
  \caption{Three possible cases for position of $B$}
  \label{ABCwithArcs}
\end{figure}

By hypothesis, $P$ lies on the circles about $\overline{AB}$ and $\overline{BC}$, as one of their points of intersection. Notice that $\tau_3+\measuredangle CPA=\pi$, since: 

\begin{align*}
\measuredangle APB+\measuredangle BPC+\measuredangle CPA&=2\pi,
\\
(\pi-\tau_1)+(\pi-\tau_2)+\measuredangle CPA&=2\pi,
\\
(\pi-\tau_1-\tau_2)+\measuredangle CPA&=\pi.
\end{align*}

Therefore $P$ also lies on the circle about $\overline{AC}$.
\end{proof}

We need to recall a few more definitions before proving the central property of $\tau_1, \tau_2, \tau_3 -$centers.

\begin{Def}[Antipedal Triangle]
Given a triangle $ABC$ and a point $P$, \textit{the triangle antipedal to} $\triangle ABC$ \textit{with respect to} $P$ is the triangle with sides lying on the lines normal to $\overline{AP}, \overline{BP},$ and $\overline{CP}$ through the points $A, B,$ and $C$, respectively. 
\end{Def}

\begin{Def}
Define an equivalence relation on $\mathfrak{T}$, the set of planar triangles, by $\triangle ABC \sim \triangle DEF \iff$ $\triangle ABC$ and $\triangle DEF$ are similar with the same orientation (so that in general a triangle is not equivalent to its reflection). Then define $\mathcal{T} = \mathfrak{T} / \sim$, the set of equivalence classes of triangles under this relation.
\end{Def}

We now state and prove a key property of these generalized first isogonic centers:

\begin{Prop}\label{antipedalprop}
Given $T \in \mathcal{T}$ and $\triangle ABC$, any $\triangle DEF \in T$ of maximal area and circumscribing $\triangle ABC$ is the antipedal triangle to $\triangle ABC$ with respect to some $\tau_1, \tau_2, \tau_3 -$center, where $\tau_1, \tau_2, \tau_3$ are the angles of the vertices of $\triangle DEF$. 
\end{Prop}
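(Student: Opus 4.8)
The plan is to realize the circumscribing triangles of a fixed shape as an explicit one-parameter family, compute the enclosed area as a function of the rotation parameter, and show that its maximum occurs exactly at the configuration whose sides are simultaneously perpendicular to the segments from a common point $P$ to $A,B,C$; a short angle-chase then identifies $P$ as the $\tau_1,\tau_2,\tau_3-$center. First I would set up the parametrization. Fix the class $T\in\mathcal{T}$ by prescribing the directions of its three sides up to a common rotation. Writing $X_1=A$, $X_2=B$, $X_3=C$, a triangle $\triangle DEF$ of shape $T$ circumscribing $\triangle ABC$ has its three sides passing through $X_1,X_2,X_3$; let $n_i(\phi)$ denote the outward unit normal of the side through $X_i$. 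Because the shape is fixed, the angles between the $n_i$ are constant, so $n_i(\phi)=R_\phi n_i^0$ for a common rotation $R_\phi$ and reference normals $n_i^0$. Thus side $i$ is the line $\langle x,n_i(\phi)\rangle=h_i(\phi)$ with $h_i(\phi)=\langle X_i,n_i(\phi)\rangle$, and as $\phi$ ranges over $[0,\pi)$ we sweep out the whole family.

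Next I would compute the area. By the standard formula for the triangle cut out by three lines $\langle x,n_i\rangle=h_i$, its area equals $D(\phi)^2/(2K)$, where $D(\phi)=\det$ of the $3\times 3$ matrix with rows $(n_i(\phi),h_i(\phi))$ and $K=\lvert\det(n_1,n_2)\det(n_2,n_3)\det(n_3,n_1)\rvert$. Since the pairwise angles of the normals are constant, $K$ is a positive constant independent of $\phi$; moreover each $h_i(\phi)=\lvert X_i\rvert\cos(\phi+\beta_i-\arg X_i)$ is a pure sinusoid, so $D(\phi)$ is a single sinusoid and $A(\phi)=D(\phi)^2/(2K)$ attains its maximum (when $\triangle ABC$ is nondegenerate) at the unique $\phi_0\pmod\pi$ with $D(\phi_0)\neq 0$ and $D'(\phi_0)=0$. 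Hence the maximal-area configuration is characterized by $D'(\phi)=0$.

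The crux, and the step I expect to be the main obstacle, is recognizing $D'(\phi)=0$ as a concurrency condition. Differentiating gives $dh_i/d\phi=\langle X_i,t_i\rangle$ with $t_i=n_i^{\perp}$; expanding $D$ along its last column, the cofactors $\det(n_j,n_k)$ are constant in $\phi$, so $D'(\phi)$ equals the determinant of the matrix with rows $(t_i,\langle X_i,t_i\rangle)$. But this is exactly the determinant whose vanishing asserts that the three lines $\langle x,t_i\rangle=\langle X_i,t_i\rangle$ are concurrent, and each such line is precisely the line through $X_i$ in direction $n_i$, i.e.\ the perpendicular dropped from $X_i$ to side $i$. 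Therefore $D'(\phi)=0$ if and only if the perpendiculars from $A,B,C$ to the opposite sides of $\triangle DEF$ meet at a single point $P$; equivalently $A,B,C$ are the feet of the perpendiculars from $P$, so $\triangle DEF$ is the antipedal triangle of $\triangle ABC$ with respect to $P$. Thus the maximal-area circumscribing triangle is antipedal. (One checks separately that at $\phi_0$ the extremal triangle genuinely contains $\triangle ABC$, i.e.\ $P$ is interior and the feet lie on the closed sides; this containment is routine and is not where the difficulty lies.)

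Finally I would identify $P$. At a vertex of $\triangle DEF$ with interior angle $\tau$, the two feet on the adjacent sides (two of $A,B,C$) together with $P$ and that vertex form a quadrilateral with right angles at the two feet, hence cyclic, so those two feet subtend the angle $\pi-\tau$ at $P$. Applying this at all three vertices shows that $P$ sees the three sides of $\triangle ABC$ at the angles $\pi-\tau_1,\pi-\tau_2,\pi-\tau_3$, which is exactly the defining property of the $\tau_1,\tau_2,\tau_3-$center established above, with the $\tau_i$ equal to the angles of $\triangle DEF$. This completes the identification and the proof.
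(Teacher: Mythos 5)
Your proof is correct, but it takes a genuinely different route from the paper. The paper argues synthetically and locally: it fixes the maximal $\triangle DEF$, draws the circumcircles of $\triangle DAB$ and $\triangle EBC$ (meeting again at $P$), varies $D'$ along the arc $ADB$ so the shape is preserved, and observes that $|D'E'|$ is twice the projection of $\overline{\mathcal{O}_1\mathcal{O}_2}$ onto the moving side, hence maximal exactly when the side is parallel to $\overline{\mathcal{O}_1\mathcal{O}_2}$, i.e.\ perpendicular to the common chord $\overline{PB}$; repeating per side gives antipedality, and the center $P$ is the $\tau_1,\tau_2,\tau_3-$center by construction, since it is the intersection of the very circles used in Definition \ref{tcenterdef}. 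You instead work globally with support-line data: parametrizing the circumscribing family by a common rotation $\phi$, expressing the area as $D(\phi)^2/(2K)$ with $D(\phi)$ a single sinusoid, and identifying $D'(\phi)=0$ with the vanishing of the concurrency determinant of the three normals at $A,B,C$ (valid here since the side directions are pairwise independent, so degenerate-parallel cases cannot occur). This is essentially the technique of the paper's proof of Theorem \ref{convex} transplanted from curves to the triangle $\triangle ABC$, and it buys you more than the paper's argument: the criticality--concurrency equivalence is an honest ``if and only if,'' and the sinusoid structure shows the critical configuration is unique mod $\pi$ for a fixed vertex-to-side assignment, rather than merely necessary at a maximum. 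What it costs you is the identification of $P$: the paper gets the $\tau$-center for free from its circles, while you need the closing cyclic-quadrilateral chase (right angles at the two feet give $\measuredangle APB=\pi-\tau_1$, etc.), which is fine. One caveat applies equally to both proofs: your parenthetical about checking that the extremal $\phi_0$ yields genuine containment (equivalently, that the maximum over the admissible sub-arc of $\phi$ is interior rather than at a configuration where a contact point hits a vertex of the circumscribing triangle) is waved off as routine, but the paper's variation argument silently assumes the same thing when it moves $D'$ to both sides of $D$ with $A,B,C$ staying on the closed sides; in the paper's application to Theorem \ref{TNIT} this is harmless because $\triangle ABC$ arises from contact points of a maximal triangle about a smooth curve, and your proof inherits the same scope.
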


\begin{proof}
Let $\tau_1, \tau_2, \tau_3$ be the angles of the vertices of the triangles in $T$, enumerated clockwise; we may assume $\triangle DEF\in T$ is oriented relative $\triangle ABC$ as in Figure \ref{DEFandABC}, relabeling vertices if necessary. 

\begin{figure}[H]  
  \centering  
  \def\svgwidth{0.3\columnwidth}  
  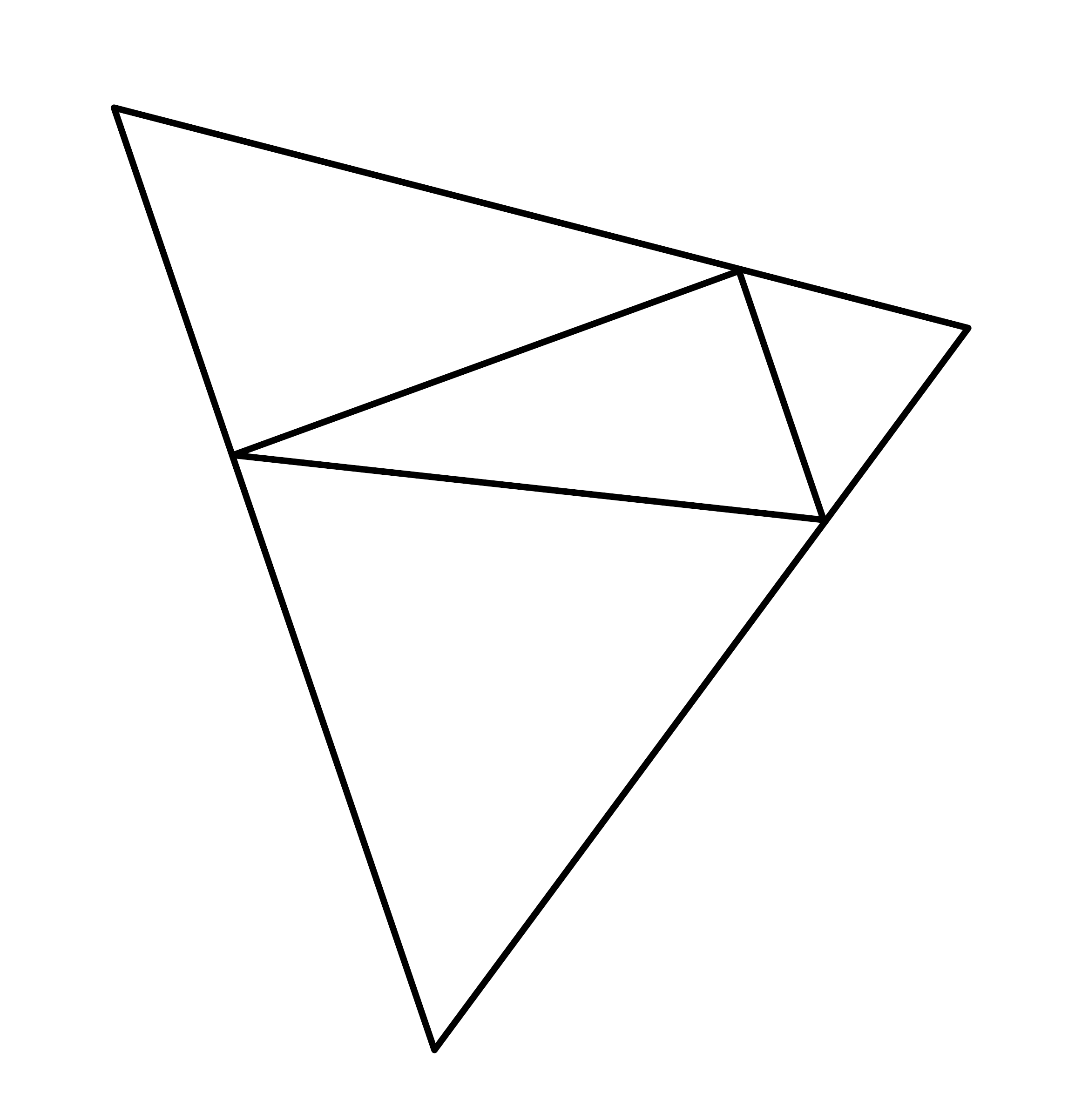
  \caption{$\triangle DEF$ and $\triangle ABC$}
  \label{DEFandABC}
\end{figure}

Perform the construction used to justify Definition \ref{tcenterdef} on $\triangle ABC$ with angles $\tau_1, \tau_2, \tau_3$, labeling as $\mathcal{O}_1$ the center of the circle circumscribing $\triangle DAB$ and $\mathcal{O}_2$ the center of the circle circumscribing $\triangle EBC$, as shown in Figure \ref{maximalSide}. For any point $D'$ sufficiently close to $D$ on the arc $ADB$, we may define $E'$ to be the intersection point of line $DB$ and arc $BEC$ distinct from $B$, and define $F'$ analogously with line $DA$ and arc $AFC$. Then $\triangle D'E'F'\in T$, and $\angle D'\equiv\angle D$, $\angle E'\equiv\angle E$, $\angle F'\equiv\angle F$. By hypothesis the length $D'E'$ should be maximized when $D'=D, E'=E$. 

\begin{figure}[H]  
  \centering  
  \def\svgwidth{0.3\columnwidth}  
  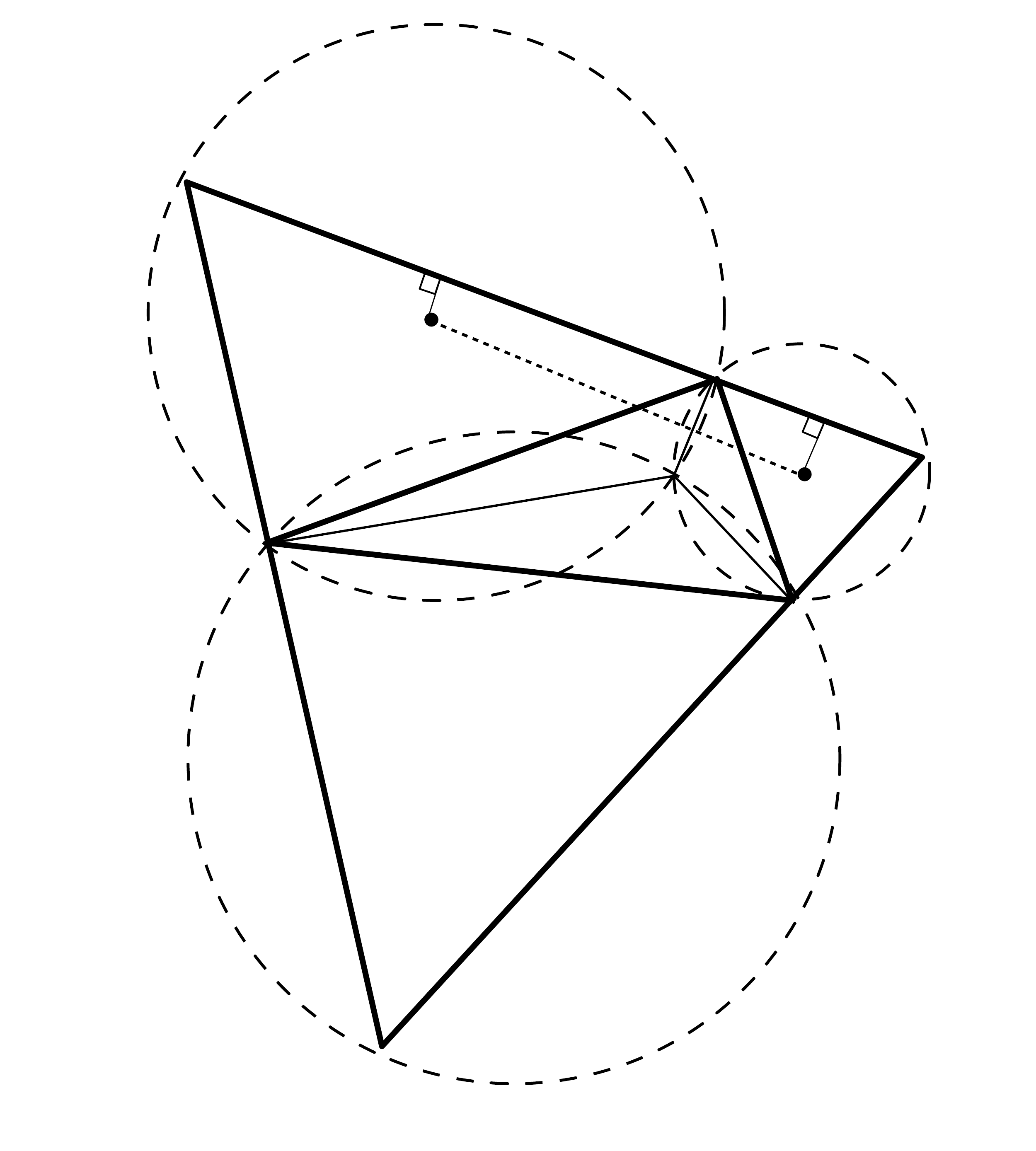
  \caption{Maximizing $\overline{D'E'}$}
  \label{maximalSide}
\end{figure}

Define $\overline{\mathcal{O}_1 x}$ and $\overline{\mathcal{O}_2 y}$ to be the shortest lines from $\mathcal{O}_1$ and $\mathcal{O}_2$ to the line segment $\overline{D'E'}$, respectively; $\overline{\mathcal{O}_1 x}$ perpendicularly bisects $\overline{D'B}$ and $\overline{\mathcal{O}_2 y}$ perpendicularly bisects $\overline{BE'}$. So the length of $\overline{D'E'}$ is twice the length of $\overline{xy}$, and $\overline{xy}$ is maximal when it is parallel to $\overline{\mathcal{O}_1 \mathcal{O}_2}$, thus perpendicular to $\overline{PB}$. Therefore $\overline{DE}$ is perpendicular to $\overline{PB}$. Repeating this argument on all three sides of the triangle shows that $\triangle DEF$ is antipedal to $\triangle ABC$ with respect to $P$. 
\end{proof}

We are now ready to state and prove the main theorem of this section.

\begin{Thm}[Triple Normal Intersection Theorem]\label{TNIT}
Given a smooth ($C^2$) closed plane curve $\gamma$ and three angles $\theta_1, \theta_2, \theta_3$, such that $\theta_1 + \theta_2 + \theta_3 = 2\pi$ and $\theta_1, \theta_2, \theta_3 < \pi$, there exist three normals to $\gamma$ intersecting at a single point and forming angles $\theta_1, \theta_2,$ and $\theta_3$. 
\end{Thm}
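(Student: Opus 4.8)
The plan is to translate the statement into the language of the generalized isogonic centers developed above and then extract the desired configuration by maximizing a triangle area. Set $\tau_i = \pi - \theta_i$ for $i = 1,2,3$. Since each $\theta_i \in (0,\pi)$ and $\theta_1 + \theta_2 + \theta_3 = 2\pi$, we get each $\tau_i \in (0,\pi)$ and $\tau_1 + \tau_2 + \tau_3 = \pi$, so the $\tau_i$ are the angles of a genuine triangle; let $T \in \mathcal{T}$ be the corresponding similarity class. The key observation is the following dictionary: three points $A, B, C \in \gamma$ have concurrent normals meeting at a point $P$ and forming angles $\theta_1, \theta_2, \theta_3$ \emph{if and only if} the tangent lines to $\gamma$ at $A, B, C$ bound a triangle $\triangle DEF \in T$ that is antipedal to $\triangle ABC$ with respect to $P$ (and $P$ is then necessarily the $\tau_1, \tau_2, \tau_3 -$center). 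Indeed, the normal to $\gamma$ at $A$ passes through $P$ exactly when $\overline{PA}$ is perpendicular to the tangent at $A$, which is precisely the defining condition for the side through $A$ of the antipedal triangle; and when the tangent triangle lies in $T$ the rays $\overline{PA}, \overline{PB}, \overline{PC}$ make angles $\pi - \tau_i = \theta_i$, exactly as computed in the justification of Definition \ref{tcenterdef}.

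It therefore suffices to produce a triangle of class $T$ whose three sides are tangent to $\gamma$ and which is antipedal to its own triangle of tangency points. To find one I would maximize area: consider the family of all triangles of class $T$ whose three sides are tangent to $\gamma$. For a convex curve this family is parametrized by a single orientation angle ranging over a circle, the area is continuous, and it is bounded (a circumscribing triangle of fixed shape is pinned in size by its three points of tangency), so the maximum is attained at some $\triangle DEF$ with tangency points $A, B, C$.

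The heart of the argument is to identify this maximizer via Proposition \ref{antipedalprop}. The essential geometric fact is the \emph{envelope} (or pivot) property: as a tangent line to $\gamma$ is deformed so as to remain tangent, it rotates, to first order, about its point of tangency. Consequently the first-order variation of $\triangle DEF$ within the tangent-triangle family is exactly a simultaneous rotation of its three sides about $A, B, C$ by a single common angle, the common angle being forced by the requirement that the shape stay in $T$. But this is precisely the one-parameter variation analyzed in the proof of Proposition \ref{antipedalprop} for triangles in $T$ circumscribing the fixed triangle $\triangle ABC$. Since the area is stationary at the maximizer, the computation there forces each side of $\triangle DEF$ to be perpendicular to the segment joining $P$ to the corresponding vertex of $\triangle ABC$; that is, $\triangle DEF$ is antipedal to $\triangle ABC$ with respect to a $\tau_1, \tau_2, \tau_3 -$center $P$. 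By the dictionary above, the normals to $\gamma$ at $A, B, C$ concur at $P$ and form the prescribed angles $\theta_1, \theta_2, \theta_3$.

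The main obstacle is the passage to \emph{arbitrary} $C^2$ closed curves, including non-convex and self-intersecting ones, where the clean picture breaks down: the directed-tangent (Gauss) map $\phi : S^1 \to S^1$ is no longer injective, tangent lines need not be support lines, the three tangent lines need not bound a triangle containing $\gamma$, and the area of the tangent triangle can diverge as two of the lines approach parallel, so the supremum of area over the whole configuration space need not be attained. To handle this I would use that $\phi$ hits every direction realized by the three side directions associated to $T$ (a closed curve cannot have its tangent confined to an arc over which it would fail to close up) in order to produce, for a range of overall orientations, admissible triples whose tangent directions realize the angles $\tau_i$; restrict to a component of this admissible configuration space on which the area functional is proper; and maximize there. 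One then checks that the maximizer is an interior critical point at which no two tangent lines are parallel and no tangency point is an inflection point, so that the envelope property and hence Proposition \ref{antipedalprop} still apply, perturbing the curve slightly if necessary. Verifying non-emptiness and compactness of a suitable component, and ruling out degenerate maximizers, is the technical crux of the general case.
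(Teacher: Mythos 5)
Your reduction ($\tau_i = \pi - \theta_i$, the similarity class $T$, area maximization, and an appeal to Proposition \ref{antipedalprop}) is exactly the skeleton of the paper's proof, and your argument is essentially sound for convex curves. But the substance of Theorem \ref{TNIT} is the \emph{general} case, and there your proposal has a genuine gap that you yourself flag: you maximize over the family of triangles of class $T$ whose three sides are \emph{tangent} to $\gamma$, and for non-convex or self-intersecting curves you cannot establish non-emptiness, properness or compactness of a suitable component, nor non-degeneracy of the maximizer. The fallback of ``perturbing the curve slightly if necessary'' does not prove the stated theorem: it would give the result only for a dense family of curves unless supplemented by a limiting argument showing the configuration persists, which you do not supply. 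Your envelope/pivot first-variation step also requires the tangency point to vary continuously and the contact to be non-inflectional --- precisely the hypotheses that fail at the problem points of a general $C^2$ curve.

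The paper sidesteps all of this with one maneuver you missed: it maximizes area over triangles of class $T$ \emph{circumscribing} $\gamma$ (containing the curve, with each side touching it), rather than over tangent triangles. Circumscribing triangles of $\gamma$ are exactly the circumscribing triangles of its convex hull, so existence and compactness of the family --- hence attainment of the maximum at some $\triangle DEF$ with contact points $A,B,C$ on $\gamma$ --- are automatic for \emph{any} closed curve; and at a contact point the side is a support line of the curve, which for a $C^2$ curve must coincide with the tangent line, so perpendicularity to the side at the contact point yields a genuine normal of $\gamma$. In place of your first-variation identification of the maximizer, the paper uses a global comparison: any triangle in $T$ circumscribing the contact triangle $\triangle ABC$ can be moved ``outward'' one side at a time until it circumscribes $\gamma$, so the area maximizer for $\gamma$ is simultaneously the area maximizer among triangles in $T$ circumscribing $\triangle ABC$, and Proposition \ref{antipedalprop} then applies verbatim --- no stationarity computation along the curve, no genericity hypotheses, no perturbation. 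Replacing your tangent-line family with the circumscribing family, and your envelope argument with this outward-pushing comparison, is what converts your convex-case sketch into a complete proof of the general statement.
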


\begin{proof}
Define $\tau_1 = \pi - \theta_1, \tau_2 = \pi - \theta_2$ and $\tau_3 = \pi - \theta_3$, and choose $T \in \mathcal{T}$ such that the angles of the vertices of the triangles in $T$ are $\tau_1$, $\tau_2$, and $\tau_3$ respectively. Because the curve $\gamma$ is $C^2$ smooth, there exists a triangle $\triangle DEF\in T$ of maximal area circumscribing $\gamma$, and there exist distinct points $A,B,C$ lying in the intersections of $\gamma$ and $\overline{FD},\overline{DE},\overline{EF}$, respectively. Furthermore, $\triangle DEF$ is also a triangle of maximal area in $T$ circumscribing $\triangle ABC$. Indeed, any triangle in the class $T$ circumscribing $\triangle ABC$ is contained inside (and no larger than) a triangle circumscribing $\gamma$ by moving the sides ``outward'' one by one. 

\begin{figure}[H]  
  \centering  
  \subfigure[Maximal triangle circumscribing $\gamma$]{\def\svgwidth{0.4\columnwidth} 
  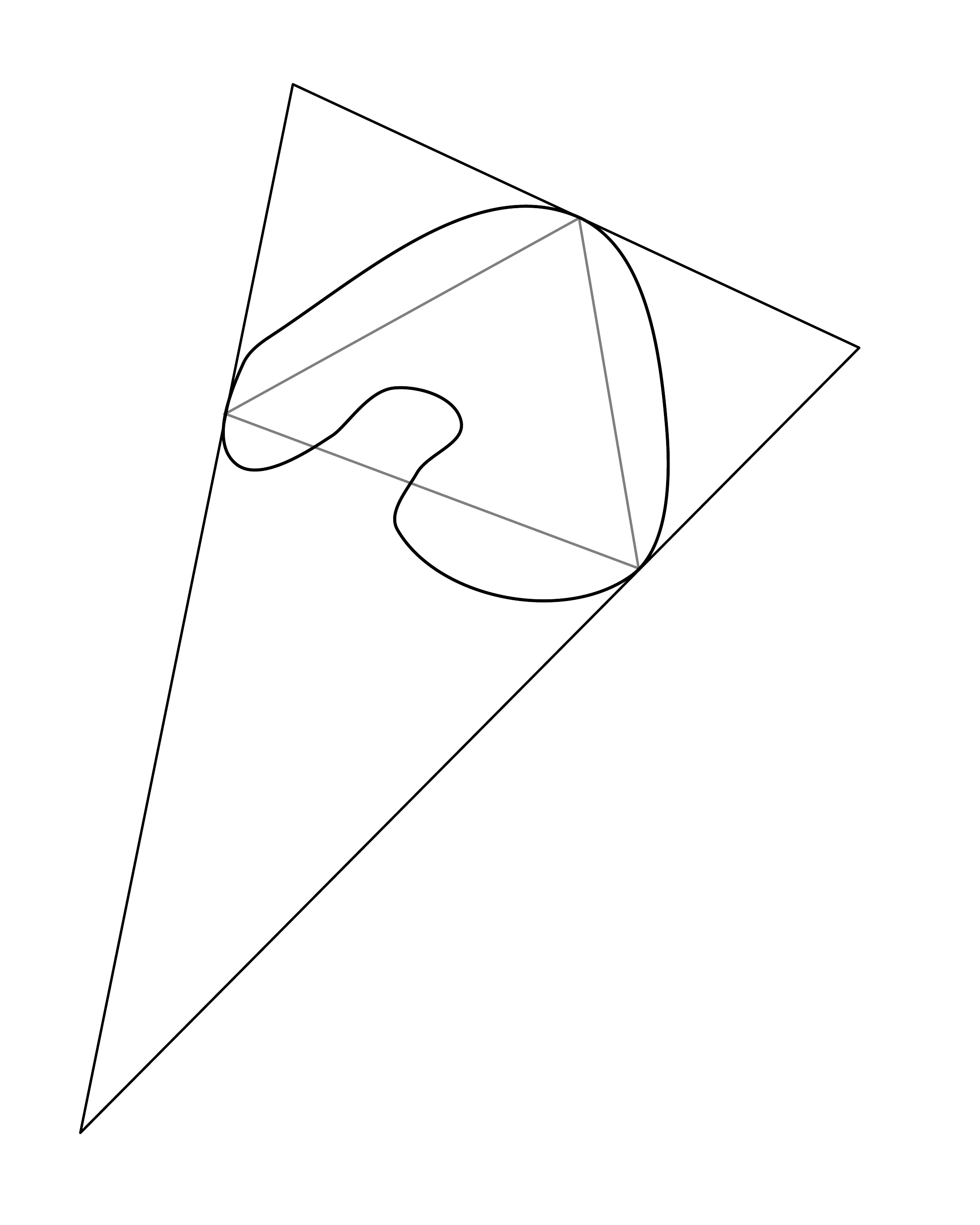}
  \quad
  \subfigure[Moving the sides ``outward'']{\def\svgwidth{0.4\columnwidth}  
  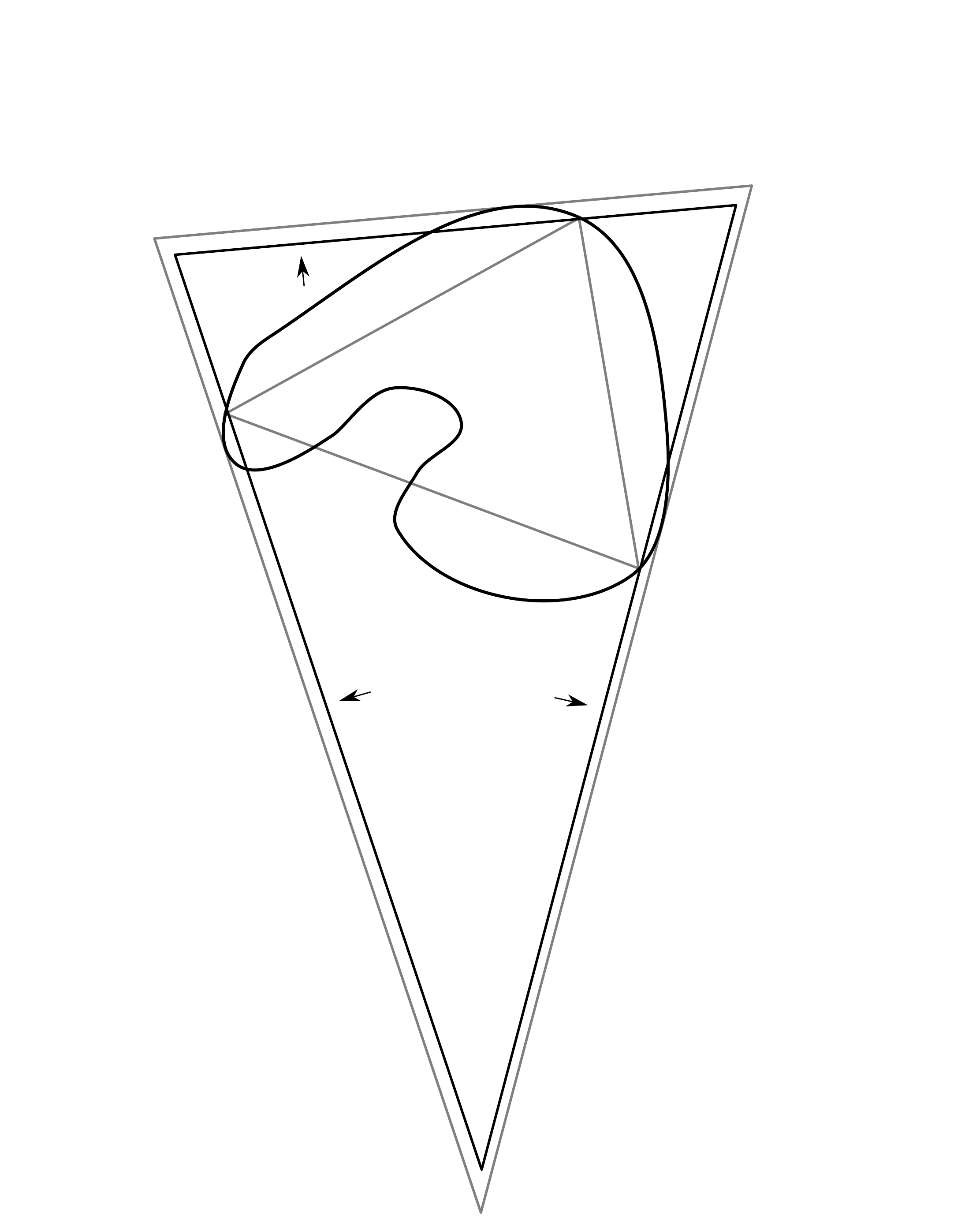}
  \caption{Maximal circumscribing triangles}
\end{figure}

Since $\triangle DEF\in T$ is the triangle of maximal area circumscribing $\triangle ABC$, it is antipedal to $\triangle ABC$ by Proposition \ref{antipedalprop}. So the three normal lines to $\gamma$ at $A,B,C$, all intersect at a point, forming the required angles.
\end{proof}

Setting $\theta_1=\theta_2=\theta_3=2\pi/3$ in Theorem \ref{TNIT} above gives us the following result.

\begin{Cor}\label{pln}
A smooth ($C^2$) closed plane curve has at least one tripod configuration. In particular, immersed plane curves with self intersection also possess at least one tripod configuration.
\end{Cor}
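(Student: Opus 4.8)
The plan is to obtain this statement as an immediate specialization of Theorem \ref{TNIT}. A tripod configuration in the sense of Definition \ref{angle} is precisely a triple of normals to $\gamma$ meeting at a common point and forming three angles each equal to $2\pi/3$, so the entire task reduces to producing normals realizing the particular angle triple $(\theta_1,\theta_2,\theta_3)=(2\pi/3,2\pi/3,2\pi/3)$.

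First I would verify that this choice of angles satisfies the hypotheses of Theorem \ref{TNIT}. Their sum is $3\cdot(2\pi/3)=2\pi$, and each equals $2\pi/3<\pi$, so both the summation condition and the strict upper-bound condition hold. Applying Theorem \ref{TNIT} with these three angles then yields three normals to $\gamma$ coincident at a single point and forming three angles of $2\pi/3$, which is exactly a tripod configuration by Definition \ref{angle}. This establishes the first sentence of the corollary.

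For the second assertion, I would point out that Theorem \ref{TNIT} is stated and proved for an arbitrary $C^2$ closed plane curve, with no embeddedness or convexity hypothesis imposed anywhere. Its proof relies only on the existence of a maximal-area circumscribing triangle $\triangle DEF$ in the chosen similarity class $T$, together with points of contact $A,B,C$ of $\gamma$ on each of its three sides; since $\gamma$ is $C^2$ and closed, hence compact, such a maximal triangle exists and touches $\gamma$ along each side even when $\gamma$ is merely immersed with self-intersections. Thus the conclusion applies verbatim to immersed curves, giving the ``in particular'' clause.

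Since the corollary is a direct substitution of specific angle values into an already-established theorem, there is no genuine obstacle remaining at this stage: all of the substantive work—producing a maximal circumscribing triangle and showing via Proposition \ref{antipedalprop} that it is antipedal to the inscribed triangle $\triangle ABC$, so that the three chosen normals are forced to concur—has already been carried out in the proof of Theorem \ref{TNIT} itself.
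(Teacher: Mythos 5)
Your proposal is correct and matches the paper exactly: the paper derives Corollary \ref{pln} precisely by setting $\theta_1=\theta_2=\theta_3=2\pi/3$ in Theorem \ref{TNIT}, and your observation that the theorem's hypotheses impose no embeddedness condition is exactly why the ``in particular'' clause about immersed curves follows verbatim.
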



\section{Spherical and Hyperbolic Geometry: A Morse Theoretical Approach}\label{MTIntro}

We now extend our definition of tripod configurations to the spherical and hyperbolic geometries and again consider the question of which types of curves posess tripod configurations. Our strategy is to take a general curve and define a parameter space (a manifold with boundary) with a function defined on it; the pair is constructed so that the critical points of this function correspond to tripod configurations of the original curve. Using Morse theory for manifolds with boundary \cite{FL}, we then bound from below the number of critical points this function must possess on this parameter space, thus giving a lower bound the number of tripod configurations of a curve. Below, we define a natural extension of tripod configurations to general geometries, state our main results, and review the necessary Morse theory for the following two sections.

\begin{Def}
Given a $C^2$ closed curve $\gamma$, a tripod configuration of $\gamma$ consists of three geodesics normal to the curve, all coincident at a single point and pairwise making angles of $2\pi/3$.
\end{Def}

This is our original definition with geodesics replacing straight lines. We now state the following main result to be proven in Sections \ref{MTIntro} through \ref{CMT}.

\begin{Thm}\label{sphyp}
Every smooth ($C^2$) closed curve sufficiently close to a circle (excluding great circles on the sphere) in the spherical or hyperbolic geometry has at least two tripod configurations.
\end{Thm}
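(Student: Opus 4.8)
The plan is to realize the Morse-theoretic strategy announced above: I would attach to each curve $\gamma$ a function $f_\gamma$ on a fixed parameter space $M$, a manifold with boundary, so that the critical points of $f_\gamma$ are precisely the tripod configurations of $\gamma$, and then count critical points by comparing $\gamma$ to the circle it approximates. Motivated by the identification of the tripod point with the Fermat--Toricelli point from the introduction, I would take $M$ to be (a component of) the space of ordered triples $(A,B,C)$ of distinct points of $\gamma$ whose geodesic Fermat--Toricelli point $P = P(A,B,C)$ lies interior to $\triangle ABC$, i.e.\ for which all three angles of $\triangle ABC$ are $< 2\pi/3$; this inequality cuts out a manifold with boundary, the boundary consisting of triples for which $P$ has collapsed onto a vertex. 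On $M$ set $f_\gamma(A,B,C) = d(P,A) + d(P,B) + d(P,C)$, the geodesic Fermat length.

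The first step is to check that the critical points of $f_\gamma$ are exactly the tripod configurations. Since $P$ is by definition the minimizer of $Q \mapsto d(Q,A) + d(Q,B) + d(Q,C)$, the envelope theorem kills the implicit dependence of $P$ on the contact points, so the derivative of $f_\gamma$ under sliding $A$ along $\gamma$ with unit speed equals $\langle \nabla_A d(P,A), T_A \rangle = \cos \angle(PA, \gamma)$, the cosine of the angle the geodesic $PA$ makes with $\gamma$ at $A$, and likewise for $B$ and $C$. Hence $(A,B,C)$ is critical if and only if $PA, PB, PC$ all meet $\gamma$ orthogonally, and since $P$ is the Fermat point the three geodesics automatically make pairwise angles $2\pi/3$ --- that is, critical points are tripod configurations in the geodesic sense.

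Next I would analyze the model circle. For a geodesic circle of radius $r$ (with $r \ne \pi/2$ on the sphere), every radius meets the circle orthogonally and passes through the center $O$, so a triple $(A,B,C)$ is critical exactly when $P = O$, which forces $A, B, C$ to be equally spaced and $\triangle ABC$ equilateral. These equilateral triples form a single circle $N \cong S^1$ of critical points, and since the equilateral triangle has all angles $< 2\pi/3$ this critical circle lies in the interior of $M$. Thus $f_{\mathrm{circle}}$ is a Morse--Bott function near $N$, and the heart of the argument is to show that $N$ is \emph{nondegenerate}: the Hessian of $f_{\mathrm{circle}}$ on the two-dimensional space normal to $N$ (tilting and off-centering the tripod) must be shown to be nonsingular. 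I expect this transverse second-variation computation --- carried out with the spherical and hyperbolic distance functions --- to be the main obstacle, and I expect it to be exactly here that the great-circle case $r = \pi/2$ degenerates and must be excluded, since as $r \to \pi/2$ the normal geodesics all refocus at the antipode and the transverse Hessian loses rank.

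Finally, with $N$ established as a nondegenerate critical circle in the interior of $M$, I would invoke the Morse theory for manifolds with boundary of \cite{FL}: a $C^2$ curve sufficiently close to the circle yields a function $f_\gamma$ that is $C^2$-close to $f_{\mathrm{circle}}$, so all critical behavior near $N$ persists in a neighborhood $N \times D^2$ disjoint from $\partial M$, and the number of critical points of $f_\gamma$ there is bounded below by the Lusternik--Schnirelmann category (equivalently the sum of the Betti numbers) of $N \cong S^1$, namely $2$. Each such critical point is a tripod configuration of $\gamma$, giving the desired two. It remains only to confirm the boundary hypotheses of \cite{FL} --- that no critical points of $f_\gamma$ leak to $\partial M$ --- which follows from the boundary-avoiding behavior of $f_{\mathrm{circle}}$ together with closeness of $f_\gamma$.
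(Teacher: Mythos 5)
Your proposal is sound in outline but takes a genuinely different route from the paper. The paper keeps the tripod point as an independent coordinate, working on the four-dimensional manifold with boundary $\mathcal{P}_\gamma=\gamma_\epsilon\times\gamma_\epsilon\times\gamma_\epsilon\times R$ with $f(t,u,v,p)=\rho(t,p)+\rho(u,p)+\rho(v,p)$, and applies Laudenbach's Morse theory for manifolds with boundary \emph{globally}: since $\partial\mathcal{P}_\gamma$ carries whole submanifolds of type $N$ critical points, the paper is forced through the $D$-type side of the theory, classifying all type $D$ critical points (they sit on diameters of $\gamma$, in two configurations, using the hypothesis that the evolute stays inside $\gamma$), computing their Morse indices by explicit principal-minor calculations, and extracting from $\mathcal{M}_f^D(t)-t^5\mathcal{P}_M(1/t)=(1+t)Q^D(t)$ that the interior polynomial $C(t)$ has positive coefficients in two distinct degrees. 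You instead eliminate $p$ by the Fermat--Toricelli minimization, collapsing to a three-dimensional space of triples, and you localize: the round circle has a critical circle $N\cong S^1$ of equilateral triples, and a Lyapunov--Schmidt reduction in a tubular neighborhood plus $C^2$-closeness yields at least $\mathrm{cat}(S^1)=2$ critical points nearby. Your route buys three things: no boundary Morse theory at all (hence no diameter classification or $D$-type index computations); no genericity assumption that $f_\gamma$ be Morse (the paper needs ``for generic curves, this functional is Morse,'' whereas the max and min of your reduced function on $S^1$ exist unconditionally); and a localization that matches the hypothesis, with ``sufficiently close to a circle'' entering only through $C^2$-closeness rather than through the evolute condition. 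The paper's route, in exchange, sees all interior critical points and their indices globally, which is the natural scaffolding for strengthening toward Conjecture \ref{sphypconj}.

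Two loose ends should be tied. First, the transverse nondegeneracy you defer is the real work, but it does go through: at an equilateral triple the transverse Hessian is $\mathbb{Z}/3$-equivariant on the two-dimensional rotation representation, hence a multiple $\lambda I$, and the second-order envelope formula gives $H_{ij}=\partial^2_{\alpha_i\alpha_j}F-m_i^{\top}\bigl(\nabla_Q^2F\bigr)^{-1}m_j$ with $\partial^2_{\alpha_i\alpha_j}F=0$ (the radius is constant along the circle); in the planar model $\nabla_Q^2F=\tfrac{3}{2}I$ and the $m_i$ are unit tangents at mutual angles $2\pi/3$, whence $\lambda=-1\neq 0$, and on the sphere the fiberwise factor becomes $\nabla_Q^2F=\tfrac{3}{2}\cot(r)\,I$, which vanishes exactly at $r=\pi/2$ --- confirming your prediction that great circles are excluded precisely here, while $\coth r>0$ shows no hyperbolic degeneration. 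Second, distinctness: your critical points live in ordered triples, and the cyclic relabeling $(A,B,C)\mapsto(B,C,A)$ acts on $N$ as rotation by $2\pi/3$ preserving $f_\gamma$, so two critical points of the reduced function could a priori lie in one orbit and name the same tripod configuration. Take them to be the max and min of the ($\mathbb{Z}/3$-invariant) reduced function: if the values differ they lie in different orbits; if they agree the function is constant and you get infinitely many configurations. (The paper faces the same issue and resolves it by producing critical points of two different indices.) Finally, you do not need the boundary hypotheses of \cite{FL} at all --- your argument is confined to a compact neighborhood of $N$ in the interior of $M$, so your last sentence can simply be dropped.
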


By sufficiently close to a circle we mean that the maximal diameter of the curve's evolute must be small in comparison to the minimal diameter of the curve. We exclude the case of curves close to a great circle in the spherical geometry since the necessary computations in Section \ref{CMI} are restricted points lying in a single hemisphere. This qualitative result gives the existence of a neighborhood around the circle in which smooth perturbations all contain 2 tripod configurations. We now introduce our parameter space and scalar function. 

\begin{Def}[Tripod Configuration Space]
Given a smooth closed curve $\gamma$ in a smooth $2$-dimensional manifold, let $R$ be the region enclosed by $\gamma$ and $\gamma_\epsilon$ be a parallel curve to $\gamma$ of constant distance $\epsilon$ away (in the ``outward'' direction, not in $R$). Then the \textit{tripod configuration space of} $\gamma$ is $\mathcal{P}_\gamma = \gamma_\epsilon \times \gamma_\epsilon \times \gamma_\epsilon \times R$. 
\end{Def}

In what follows we use the coordinates, $(t, u, v, p)$ where $t, u, v \in \gamma_\epsilon$ and $p \in R$, to discuss points in $\mathcal{P}_\gamma$. The region $R$ includes its boundary, and since $\gamma$ is a smooth curve, $\mathcal{P}_\gamma$ is a smooth manifold. 

\begin{Def}[Tripod Functional]
\textit{The Tripod Functional} of a curve, $\gamma$, is the function, $f : \mathcal{P}_\gamma \to \mathbb{R}$ defined by $(t, u, v, p) \mapsto \rho(t, p) + \rho(u, p) + \rho(v, p)$ where $\rho$ is the distance function on the ambient manifold. 
\end{Def}

Note that $f$ is a smooth function except possibly where $t, u,$ or $v$ coincides with $p$. But since the region $R$ is contained properly within $\gamma_\epsilon$, these points do not exist in our domain, and thus $f$ is smooth. For generic curves, this functional is Morse, i.e. has a non-singular Hessian. Below we establish that certain equivalence classes of its critical points from the interior of $\mathcal{P}_\gamma$ correspond to the tripod configurations of $\gamma$.

\begin{Prop}\label{critprop}
Let $\mathcal{C}$ be the set of interior critical points of $f$, and let $(t, u, v, p) \sim (x, y, z, p)$ if $\sigma(t, u, v) = (x, y, z)$ for some permutation on three objects $\sigma\in S_3$. Then for every element of $\mathcal{C} / \sim$, $\gamma$ has at least one tripod configuration.
\end{Prop}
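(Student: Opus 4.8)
The plan is to compute the first variation of $f$ at an interior critical point $(t,u,v,p)$ and to read off, one variable at a time, the geometric conditions that together say ``$p$ is a tripod point for the three normals through $t,u,v$.'' The only analytic input needed is the first variation formula for the Riemannian distance function: writing $\rho_q(\cdot)=\rho(q,\cdot)$ for a fixed point $q$, the gradient $\nabla\rho_q$ evaluated at a point $w$ (off the cut locus of $q$) is the unit vector tangent at $w$ to the minimizing geodesic from $q$ to $w$, pointing in the direction of increasing distance. The smoothness of $f$ established in the setup guarantees we stay off all relevant cut loci, so this formula applies throughout.

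First I would treat the derivatives in $t,u,v$, which range over $\gamma_\epsilon$. Varying $t$ along $\gamma_\epsilon$ with $u,v,p$ fixed, the derivative of $f$ is the inner product of $\nabla\rho_p(t)$ with the unit tangent to $\gamma_\epsilon$ at $t$; hence $\partial_t f = 0$ forces $\nabla\rho_p(t)$ to be orthogonal to $\gamma_\epsilon$ at $t$, i.e.\ the geodesic from $p$ to $t$ meets $\gamma_\epsilon$ orthogonally there, and likewise at $u$ and $v$. I would then invoke the Riemannian analogue of the equidistant-curve observation made after Theorem \ref{convex}: since $\gamma_\epsilon$ is obtained by flowing $\gamma$ a fixed distance $\epsilon$ along its normal geodesics, the normal geodesic to $\gamma_\epsilon$ at $t$, extended inward, is exactly the normal geodesic to $\gamma$ at the corresponding foot point. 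Because $p$ lies on the inward side of $t$ along this geodesic, the three conditions $\partial_t f = \partial_u f = \partial_v f = 0$ produce three geodesics normal to $\gamma$, all passing through $p$.

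Next I would treat the derivative in $p$, which ranges over the interior of $R$. Here
\begin{align*}
\nabla_p f = \nabla\rho_t(p) + \nabla\rho_u(p) + \nabla\rho_v(p),
\end{align*}
a sum of three unit vectors in the tangent plane at $p$, each tangent to one of the normal geodesics found above. The critical condition $\nabla_p f = 0$ says these three unit vectors sum to zero; since $|\,\nabla\rho_t(p)+\nabla\rho_u(p)\,| = |\,\nabla\rho_v(p)\,| = 1$ forces the pairwise inner products to equal $-\tfrac12$, the three vectors are pairwise separated by angle $2\pi/3$. As each is tangent at $p$ to one of the three normal geodesics, these geodesics meet pairwise at $2\pi/3$, which is precisely the defining condition of a tripod configuration in this geometry, with tripod point $p$. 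Finally, permuting $(t,u,v)$ by $\sigma\in S_3$ leaves $p$ and the unordered triple of normals unchanged and so produces the same configuration; hence each class in $\mathcal{C}/\!\sim$ determines one tripod configuration, as claimed.

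The step requiring the most care is the shared-normals claim. Unlike the Euclidean case, where the parametrization $\gamma_r(s)=\gamma(s)+rn(s)$ makes it transparent, in the spherical and hyperbolic settings it must be argued through the normal exponential map, with the Gauss lemma guaranteeing that geodesics leaving $\gamma$ orthogonally remain orthogonal to every parallel curve $\gamma_\epsilon$. I would also confirm that $\rho$ is smooth with the stated gradient on the region in play; this is automatic from the smoothness of $f$, and in the spherical case is ensured by the hemisphere restriction, which keeps all configurations short of antipodal cut loci. Everything beyond these two points is the standard first variation of arc length.
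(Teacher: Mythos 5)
Your proposal is correct and follows essentially the same route as the paper's proof: vanishing of the $t,u,v$-derivatives gives geodesic segments meeting the curve orthogonally, vanishing of the $p$-derivative makes the three unit tangent vectors at $p$ sum to zero and hence meet pairwise at angle $2\pi/3$, and the $S_3$ action accounts for the six critical points per configuration. The only place you go beyond the paper is in explicitly justifying, via the normal exponential map and the Gauss lemma, that a geodesic normal to $\gamma_\epsilon$ is also normal to $\gamma$ --- a step the paper's proof passes over silently (it asserts orthogonality to $\gamma$ although the variation is taken along $\gamma_\epsilon$) --- so this is a welcome tightening rather than a divergence.
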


\begin{proof}
The functional $f$ has an interior critical point at $(t_0, u_0, v_0, p_0)$ precisely when $\grad f = 0$ at that point, which implies:

\begin{align*}
\frac{d}{dt}|_{t_0} f &= \frac{d}{dt}|_{t_0} \rho(t, p_0) = 0,\\ 
\frac{d}{du}|_{u_0} f &= \frac{d}{du}|_{u_0} \rho(u, p_0) = 0, \\ 
\frac{d}{dv}|_{v_0} f &= \frac{d}{dv}|_{v_0} \rho(v, p_0) = 0. \\
\end{align*}

So  $t_0, u_0,$ and $v_0$ are critical points of the function $x \mapsto \rho(x, p)$. Thus, the (arc length minimizing) geodesic segments $\overline{t_0 p_0}$, $\overline{u_0 p_0}$, and $\overline{v_0 p_0}$ are orthogonal to the curve $\gamma$, and pairwise form angles of $2\pi/3$. This is true since in general $\frac{d}{dy} \rho(x, y)$ gives the unit vector pointing from $y$ to $x$, and

\[ 0 = \frac{d}{dp}|_{p_0} f = \frac{d}{dp}|_{p_0} \rho(t_0, p) + \frac{d}{dp}|_{p_0} \rho(u_0, p) + \frac{d}{dp}|_{p_0} \rho(v_0, p). \]

So the geodesic lines through $\overline{t_0 p}$, $\overline{u_0 p}$, and $\overline{v_0 p}$ form a tripod configuration of $\gamma$. Because we can permute the first three coordinates of our configuration space in six ways, there are exactly six critical points of the functional $f$ corresponding to a single tripod configuration. 
\end{proof}

\begin{Rem}
The critical points described above detect tripod configurations with tripod points \textit{inside} the curve only; tripod configurations as in Figure \ref{def3} with tripod points occuring outside of the curve will not be counted.
\end{Rem}

Morse theory for a functional $f$ on a manifold $M$ with boundary is concerned with the critical points of $f$ in the interior of $M$ and the critical points of $f$ when restricted to $\partial M$. In our situation, the functional $f$ has critical points in the interior of $\mc{P}_\gamma$ whenever $\grad f$ is zero and has critical points when restricted to $\partial \mc{P}_\gamma$ whenever $\grad f$ points either outwards or inwards orthogonally to $\mc{P}_\gamma$ from $\partial \mc{P}_\gamma$. The first situation corresponds to tripod configurations of $\gamma$ as discussed in Proposition \ref{critprop}. Using the notation of Laudenbach \cite{FL}, the last two situations correspond to Dirichlet or $D$ type critical points, and Neumann or $N$ type critical points, respectively; a critical point is said to be type $D$ if the gradient vector points orthogonally outward along the boundary, and type $N$ if the gradient vector points orthogonally inward along the boundary. Letting $n(p)$ be the outward pointing normal at the boundary point $p$, this condition may equivalently be formulated as $(\grad f|_p, n(p)) > 0$ for type $D$ critical points, and $(\grad f|_p, n(p)) < 0$ for type $N$ critical points. 

The \textit{Morse index} of a critical point denotes the number of negative eigenvalues of the Hessian $Hess(f)$ at that point. Following \cite{FL}, given a manifold with boundary $M$, we fix the following notation:

\begin{description}
\item[$C_k$] denotes the set of critical points of $f:int(M)\rightarrow\mb{R}$ of index $k$.
\item[$N_k$] denotes the set of critical points of $f:\partial M\rightarrow\mb{R}$ of type $N$ and index $k$.
\item[$D_k$] denotes the set of critical points of $f:\partial M\rightarrow\mb{R}$ of type $D$ and index $k-1$.
\item[$|\cdot|$] denotes the cardinality of the indicated finite set.
\end{description}

We define the Morse polynomials $\mc{M}_f^N$ and $\mc{M}_f^D$ as follows:

\begin{align*}
\mc{M}_f^N(T)&=\sum_k|C_k\cup N_k| T^k,
\\
\mc{M}_f^D(T)&=\sum_k|C_k\cup D_k| T^k.
\end{align*}

We define $\mc{P}_M$, the Poincar\'{e} polynomial of $M$:

\begin{align*}
\mc{P}_M(T)=\sum_k\text{rank}\ H_k(M;\mb{Z})\ T^k.
\end{align*}

We then have the following theorem from \cite{FL}:

\begin{Thm}[Laudenbach]\label{MT}
We have
\begin{align*}
\mc{M}_f^N(T)-\mc{P}_M(T)&=(1+T)Q^N(T),
\\
\mc{M}_f^D(T)-T^n\mc{P}_M(1/T)&=(1+T)Q^D(T),
\end{align*}
where $Q^N(T)$ and $Q^D(T)$ are polynomials with nonnegative coefficients, and $n$ is the dimension of the manifold $M$.
\end{Thm}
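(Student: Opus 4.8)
The plan is to run the classical sublevel-set argument of Morse theory, adapted to the boundary, and then feed the resulting cell counts into the algebraic Morse inequalities. I would assume, as holds generically, that $f$ is Morse in the boundary sense: the Hessian is nondegenerate at each interior critical point, $f|_{\partial M}$ is Morse, and the normal derivative of $f$ is nonzero at each boundary critical point (so each boundary critical point is unambiguously of type $N$ or $D$). Since $M$ is compact there are finitely many critical points. Writing $M^a = f^{-1}((-\infty,a])$, the first step is the usual deformation lemma: if $[a,b]$ contains no critical value, neither of $f$ on $\mathrm{int}(M)$ nor of $f|_{\partial M}$, then $M^b$ deformation retracts onto $M^a$, and likewise the pair $(M^b, M^b\cap\partial M)$ retracts onto $(M^a, M^a\cap\partial M)$. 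The one technical subtlety is that the naive downward flow of $-\grad f$ may exit $M$ near type $N$ points, so I would instead build a gradient-like vector field $X$ with $df(X)>0$ that is tangent to $\partial M$ away from the boundary critical points; its downward flow then carries $M^b$ into $M^a$ while preserving the boundary.

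The heart of the argument is the local analysis at each critical value. At an interior critical point the Morse lemma gives $f = f(p) - x_1^2 - \cdots - x_k^2 + x_{k+1}^2 + \cdots + x_n^2$, and crossing the value attaches a $k$-cell to both the absolute sublevel set and the relative pair. At a boundary critical point I would establish a half-space Morse lemma: in coordinates with $\partial M = \{x_n = 0\}$, $M = \{x_n \ge 0\}$, one writes $f = f(p) \pm x_n + \sum_{i<n}\varepsilon_i x_i^2$, where $k$ of the $\varepsilon_i$ are negative ($k$ being the index of $f|_{\partial M}$) and the sign of the linear term records the type. For type $N$ (gradient inward, $+x_n$) the descending disk lies in the boundary and has dimension $k$, so crossing the value attaches a genuine $k$-cell to the absolute sublevel set while being trivial modulo $\partial M$. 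For type $D$ (gradient outward, $-x_n$) the descending set is a half-disk of dimension $k+1$ resting on $\partial M$; attaching it is a homotopy equivalence (a collar) for the absolute sublevel set but contributes a genuine $(k+1)$-cell to the pair $(M,\partial M)$. This is exactly the index bookkeeping of the statement: type $N$ points of boundary index $k$ behave as $k$-cells for the absolute theory, type $D$ points of boundary index $k-1$ behave as $k$-cells for the relative theory, and interior index-$k$ points count for both.

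Assembling the pieces, $M$ has the homotopy type of a CW complex with $|C_k\cup N_k|$ cells in dimension $k$, while the pair $(M,\partial M)$ has one with $|C_k\cup D_k|$ cells in dimension $k$. I would then invoke the algebraic Morse inequalities: for any finite CW complex $X$ with $c_k$ cells in dimension $k$, the cellular boundary maps force $\sum_k c_k T^k - \sum_k (\mathrm{rank}\,H_k(X))T^k = (1+T)Q(T)$ for a polynomial $Q$ with nonnegative integer coefficients. Applied to $M$ this is precisely the first identity. Applied to $(M,\partial M)$ it gives $\mc{M}_f^D(T) - \mc{P}_{(M,\partial M)}(T) = (1+T)Q^D(T)$, where $\mc{P}_{(M,\partial M)}$ is the Poincar\'e polynomial of the relative homology. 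Finally, Lefschetz duality $H_k(M,\partial M) \cong H^{n-k}(M)$ (using that $M$ is orientable, as it is in our application) yields $\mathrm{rank}\,H_k(M,\partial M) = \mathrm{rank}\,H_{n-k}(M)$, so that $\mc{P}_{(M,\partial M)}(T) = T^n\mc{P}_M(1/T)$, producing the second identity.

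The main obstacle is the boundary half-space analysis together with the compatible deformation retractions: one must produce a gradient-like field that is tangent to $\partial M$ except at boundary critical points and behaves correctly in the normal direction there, and then verify carefully that type $N$ points feed the absolute homology while type $D$ points feed the relative homology with the stated index shift. Once those local models are pinned down, the passage from the handle/cell decomposition to the $(1+T)Q(T)$ inequalities is entirely standard.
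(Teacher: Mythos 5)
The paper offers no proof of this theorem at all---it is quoted from Laudenbach \cite{FL} as an external result---so your proposal must be measured against the literature rather than an internal argument. Your sketch is correct and is the classical sublevel-set proof of the Morse inequalities on manifolds with boundary (in the style of Morse--Van Schaack and Braess): build a gradient-like field tangent to $\partial M$ away from boundary critical points, prove the half-space Morse lemma $f = f(p) \pm x_n + \sum_{i<n}\varepsilon_i x_i^2$, observe that type $N$ points attach absolute cells in the boundary index while type $D$ points leave the absolute homotopy type unchanged but attach relative cells with the index raised by one, and feed the cell counts into the standard algebraic identity $\sum_k c_k T^k - \sum_k \mathrm{rank}\, H_k\, T^k = (1+T)Q(T)$ with $Q$ nonnegative. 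Your bookkeeping matches the paper's conventions exactly ($D_k$ consists of type $D$ points of boundary index $k-1$, counted as $k$-cells for the relative theory). Laudenbach's own proof in \cite{FL} is technically different: he constructs a Morse complex directly from a pseudo-gradient adapted to the boundary and identifies its homology with $H_*(M)$ (Neumann version) and $H_*(M,\partial M)$ (Dirichlet version); your route recovers the same inequalities by older, more elementary sublevel-set homotopy arguments, at the cost of not producing the Morse complex itself. The one blemish is your final step: Lefschetz duality imports an orientability hypothesis that is not in the statement of the theorem. This is harmless in the present application, since $\mc{P}_\gamma = \gamma_\epsilon\times\gamma_\epsilon\times\gamma_\epsilon\times R$ is orientable, but it can be avoided entirely by applying the first identity to $-f$: negation swaps types $N$ and $D$, sends interior index $k$ to $n-k$ and boundary index $j$ to $n-1-j$, so that $\mc{M}_{-f}^{N}(T)=T^{n}\mc{M}_{f}^{D}(1/T)$; the resulting identity $T^{n}\mc{M}_{f}^{D}(1/T)-\mc{P}_M(T)=(1+T)\tilde{Q}(T)$, after substituting $T\mapsto 1/T$ and multiplying by $T^{n}$, yields the second identity with $Q^{D}(T)=T^{n-1}\tilde{Q}(1/T)$, whose coefficients are nonnegative because $\deg\tilde{Q}\le n-1$.
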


\begin{figure}[H]  
  \centering  
  \def\svgwidth{0.3\columnwidth}  
  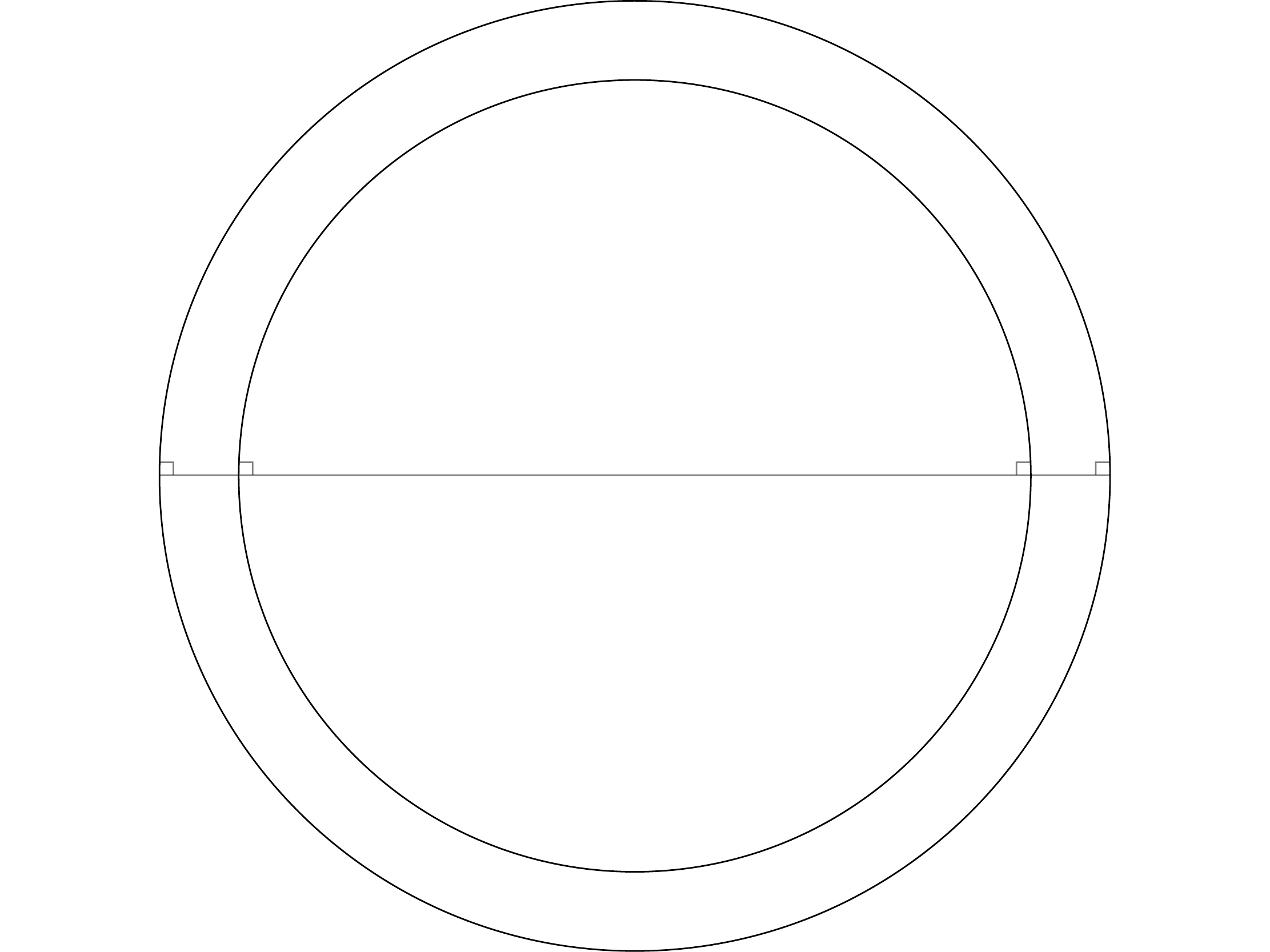
  \caption{A type $N$ critical point}
  \label{densesubmanifold}
\end{figure}

To study the number of critical points our tripod functional possesses in the interior of the tripod configuration space using Theorem \ref{MT}, we will analyze its type $D$ critical points. We make this choice since our configuration space may possess infinitely many type $N$ critical points along the boundary. Indeed, in Figure \ref{densesubmanifold}, when $t_0 = u_0 = v_0$ and $p_0$ is the closest point in $R$ to $t_0 = u_0 = v_0$, then the direction of greatest increase for $p\mapsto d(t_0, p) + d(u_0, p) + d(v_0, p)$ is directly into $R$, normal to $\gamma$. So $(t_0, u_0, v_0, p_0)$ is a type $N$ critical point, and $M_N = \{ (t, t, t, p) : p \text{ is the closest point to }t \in \gamma \}$ is a submanifold of type $N$ critical points.

\section{Type $D$ Critical Points}

Our goal in this section is to describe when type $D$ critical points occur for the functional $f:P_\gamma\rightarrow\mb{R}$. Recall the notation fixed in the previous section. The functional $f$ has a boundary critical point of type $D$ at $(t_0,u_0,v_0,p_0)$ if and only if the gradient vector of $f$ points orthogonally outward along the boundary of $P_\gamma$. Equivalently, this requires that line segments $\overline{t_0 p_0},\overline{u_0 p_0},\overline{v_0 p_0}$ are orthogonal to $\gamma_\epsilon$ (and thus $\gamma$), that $p_0$ lies on $\gamma$, and that the vector $d/dp|_{p_0} f(t_0,u_0,v_0,p)$ in the $2$-dimensional space containing $\gamma$ is normal $\gamma$, pointing outwards. We therefore consider the possible numbers of distinct lines normal to $\gamma_\epsilon$ all passing through a single point $p$ on $\gamma$.

\begin{figure}[H]  
  \centering  
  \def\svgwidth{0.5\columnwidth}  
  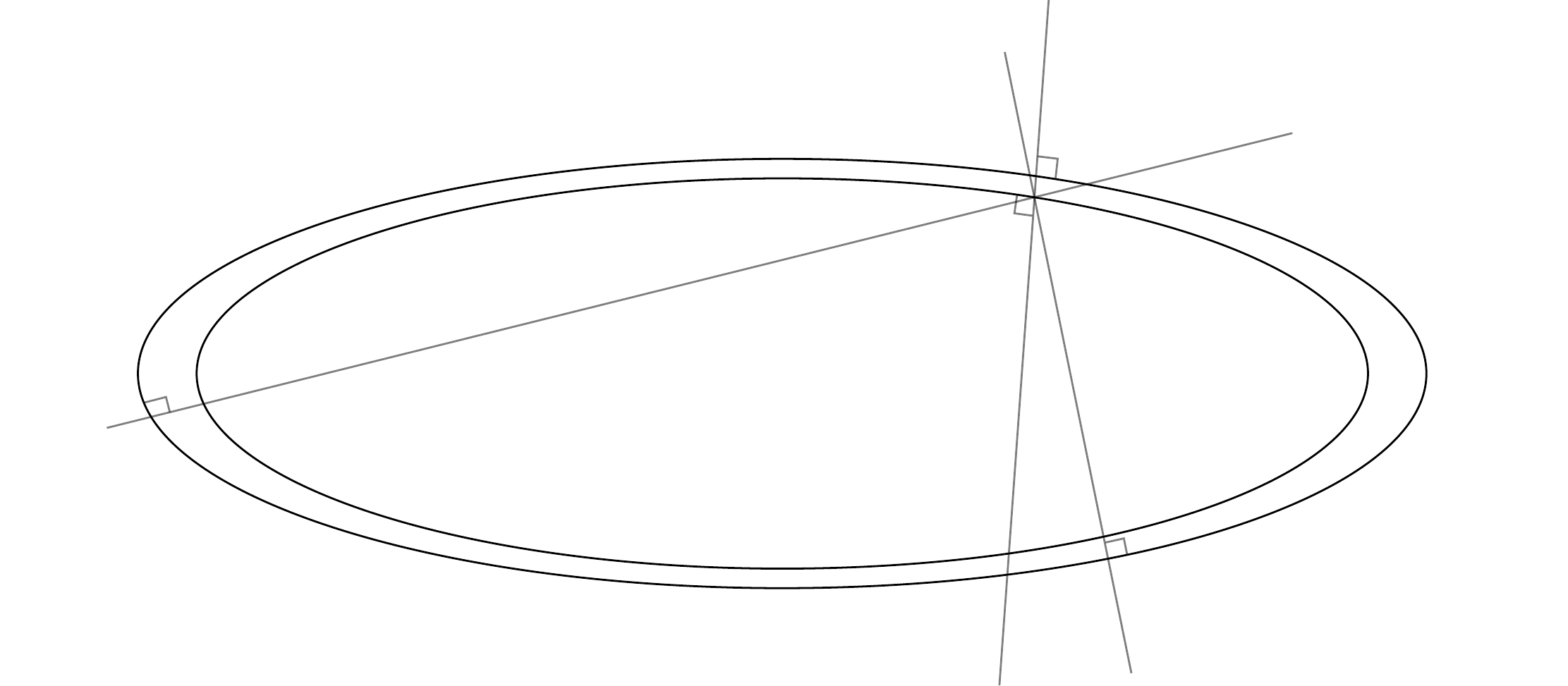
  \caption{Three lines through $p$ normal to $\gamma_\epsilon$}
  \label{normalepsilon}
\end{figure} 

As discussed earlier, we assume in Sections \ref{MTIntro} through \ref{CMT} that $\gamma$ is sufficiently close to a circle, so that in particular $\gamma$ encloses its evolute. We will next see that this is sufficient to ensure that there are at most two distinct lines normal to $\gamma_\epsilon$ passing through a single point on $\gamma$. First, recall that the evolute of a smooth curve is the envelope of its normal lines; in particular, the evolute of a circle degenerates to a single point, its center. 

\begin{Lem}
Let $\gamma$ be a smooth closed curve sufficiently close to a circle, so that its evolute lies strictly inside $\gamma$. Fix $\epsilon>0$; then for every point of $\gamma$ there exist exactly two lines passing through it which are also normal to $\gamma_\epsilon$. 
\end{Lem}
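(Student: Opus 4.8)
The plan is to reduce the statement to a count of the ordinary normal lines of $\gamma$, and then to pin that count down by a degree/covering argument governed by the evolute.

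First I would use equidistance. Since $\gamma_\epsilon(s)=\gamma(s)\pm\epsilon\,n(s)$ is a parallel curve, its tangent at parameter $s$ is a nonzero multiple of $t(s)$, so its unit normal is $\pm n(s)$ and the point $\gamma_\epsilon(s)$ lies on the normal line to $\gamma$ at $\gamma(s)$. Hence $\gamma$ and $\gamma_\epsilon$ have exactly the same family of normal lines (the equidistance property already noted above), and a line through $p\in\gamma$ is normal to $\gamma_\epsilon$ if and only if it is a normal line of $\gamma$. So it suffices to show that through each $p\in\gamma$ there pass exactly two normal lines of $\gamma$. Parametrizing $\gamma$ by arc length, a normal line at $\gamma(s)$ contains $p$ precisely when $(p-\gamma(s))\cdot t(s)=0$, equivalently when $p=\gamma(s)+r\,n(s)$ for $r=(p-\gamma(s))\cdot n(s)$. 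Thus the desired number is the number of preimages of $p$ under the normal exponential map $\Phi\colon S^1\times\mathbb{R}\to\mathbb{R}^2$, $\Phi(s,r)=\gamma(s)+r\,n(s)$.

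Next I would analyze $\Phi$. From the Frenet relation $n'=-\kappa t$ one computes $\partial_s\Phi=(1-r\kappa(s))t(s)$ and $\partial_r\Phi=n(s)$, so the Jacobian determinant is $1-r\kappa(s)$; hence the critical values of $\Phi$ are exactly the points $\gamma(s)+\tfrac{1}{\kappa(s)}n(s)$, i.e. the evolute $E$ of $\gamma$. Moreover $\Phi$ is proper, since $|\Phi(s,r)|\ge |r|-\max_s|\gamma(s)|$ forces $\Phi^{-1}(K)$ to be bounded, hence compact, for every compact $K$. Restricted over the open set $\mathbb{R}^2\setminus E$, which contains no critical value, $\Phi$ is therefore a proper local diffeomorphism, hence a covering map over each connected component, so the number of preimages is constant on each component of $\mathbb{R}^2\setminus E$. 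To finish, I would identify the right component: because $\gamma$ is close to a circle its evolute $E$ is a compact set lying strictly inside $\gamma$, so the unbounded component $U$ of $\mathbb{R}^2\setminus E$ contains the exterior of $\gamma$ and, by pushing outward, every point of $\gamma$ itself; thus $p\in U$, and $U$ also contains points arbitrarily far from $\gamma$. Evaluating the fiber at a far point $p=Ru$ as $R\to\infty$, the equation $(p-\gamma(s))\cdot t(s)=0$ degenerates to $u\cdot t(s)=0$; since $\gamma$ is strictly convex its tangent indicatrix covers $S^1$ once, giving exactly two (transverse) solutions. By constancy of the fiber on $U$ we conclude $\#\Phi^{-1}(p)=2$, i.e. exactly two normal lines of $\gamma$, equivalently of $\gamma_\epsilon$, pass through $p$.

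The main obstacle is the global step: establishing that the preimage count is genuinely constant along $U$ and that $p\in\gamma$ lies in the same component of $\mathbb{R}^2\setminus E$ as the far points. This is exactly where the hypothesis that the evolute lies strictly inside $\gamma$ is essential, since it guarantees that $E$ does not separate $\gamma$ from infinity, and where properness is needed so that no normal foot escapes to $r=\infty$ as $p$ varies over $U$. Verifying that a proper local diffeomorphism is a covering map, so that the fiber cardinality is locally constant, is the technical heart of the argument; the Jacobian and far-point computations are routine by comparison.
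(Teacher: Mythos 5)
Your proposal is correct, and its skeleton is the same as the paper's: both arguments rest on the fact that the number of normals to a curve through a point is constant on each connected component of the complement of the evolute, together with the observation that $\gamma$ and $\gamma_\epsilon$ share their normal lines (hence their evolute), so that the hypothesis that the evolute lies strictly inside $\gamma$ places all of $\gamma$ in a single (unbounded) component. The difference is in what is proved versus cited: the paper's proof invokes the constancy principle as a known result (citing Fuchs--Tabachnikov) and simply asserts the value two, whereas you supply a self-contained proof --- the normal exponential map $\Phi(s,r)=\gamma(s)+r\,n(s)$ with Jacobian $1-r\kappa(s)$, properness, the fact that a proper local diffeomorphism is a covering over each component, and an anchoring computation at infinity where the fiber degenerates to the two transverse roots of $u\cdot t(s)=0$. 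What your route buys is rigor and an explicit justification of the count two, at the cost of the covering-map technicalities; the paper buys brevity by citation. Two fine points you should make explicit: first, your count is of preimages $(s,r)$, i.e.\ normals with their foot points, and this is the right convention here (for the exact circle the two feet lie on one geometric line, so ``two lines'' in the lemma must be read this way --- an imprecision the paper shares, since the cited constancy result also counts feet); second, your far-point computation uses strict convexity (monotone tangent indicatrix), which is not a stated hypothesis but does follow from ``sufficiently close to a circle'' in the $C^2$ sense ($\kappa>0$ and rotation index one), so a sentence noting that implication would close the loop.
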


\begin{proof}
In general, given a fixed curve in a $2$-dimensional space, we may define a function on the space by mapping each point in the space to the number of distinct lines normal to the curve passing through that point. This number is constant for points in the connected components of the complement of the evolute of the curve (see, for instance, \cite{Mathomni}). Now $\gamma$ is obtained from a circle by a sufficiently small deformation so that the common evolute of $\gamma$ and $\gamma_\epsilon$ does not intersect $\gamma$, so that $\gamma$ and $\gamma_\epsilon$ lie in the same connected component of the complement of the evolute. Thus the number of lines normal to $\gamma_\epsilon$ passing through a point on $\gamma$ is always two.
\end{proof}

We therefore see that if $(t_0,u_0,v_0,p_0)$ is a type $D$ critical point of $f$, then with $p_0$ fixed, each of $t_0,u_0,v_0$ are one of exactly two points on $\gamma_\epsilon$ whose line segments connecting them to $p_0$ are normal to $\gamma$. Because $d/dp|_{p_0} f(t_0,u_0,v_0,p)$ is normal to $\gamma$ pointing in the outward direction, it must also be the case that $t_0,u_0,v_0,p_0$ all lie on a single diameter of $\gamma$. Recall that a diameter of convex closed curve $\gamma$ is a line normal to the curve at two points. Finally, since $d/dp|_{p_0} f$ should point outwards from $\gamma$, we conclude that all type $D$ critical points are associated with diameters of $\gamma$ in one of the two configurations shown in Figure \ref{typeDcases}.

\begin{figure}[H]
\centering  
  \subfigure[Case $1$]{\def\svgwidth{0.4\columnwidth}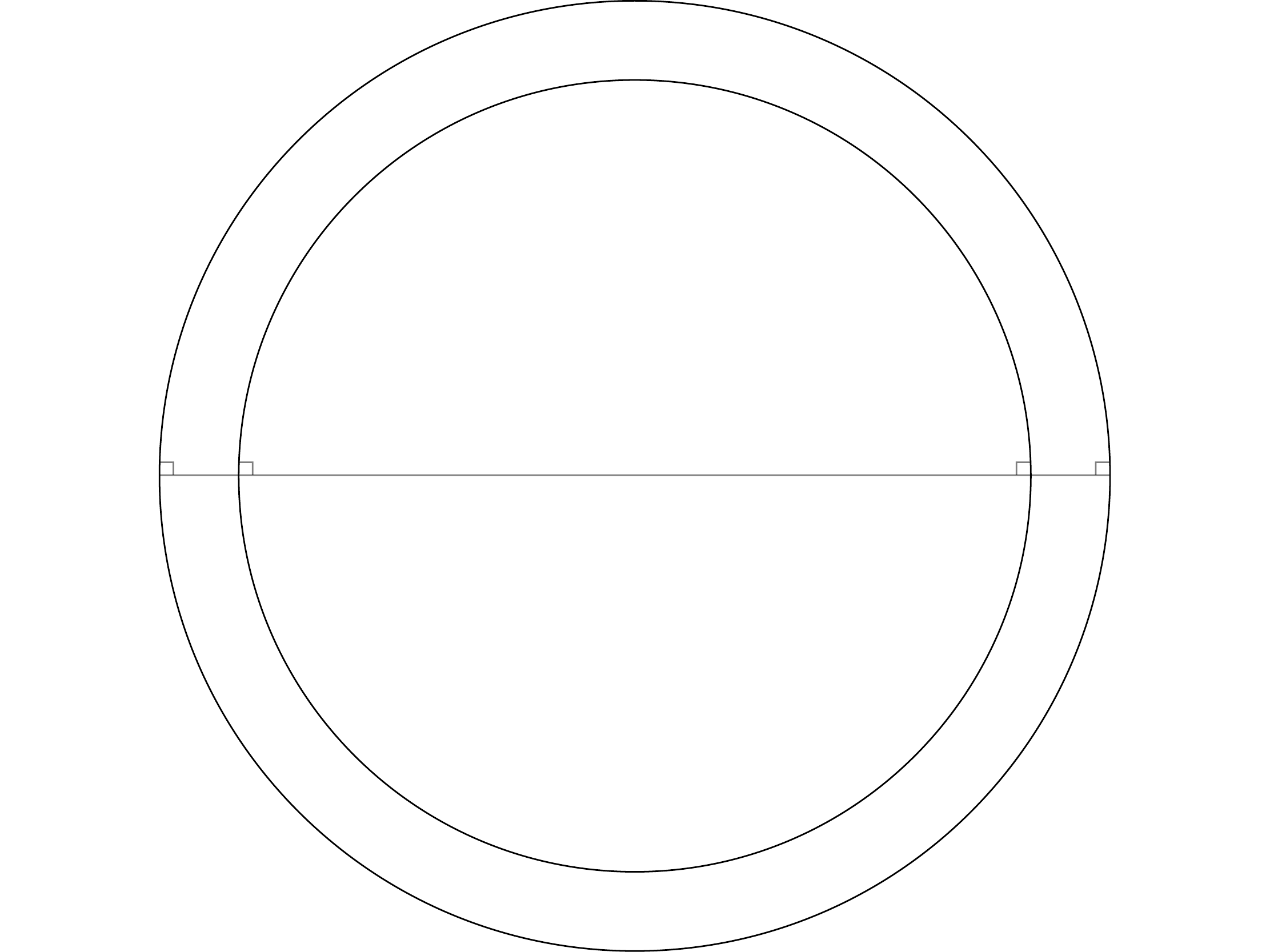}
  \qquad\qquad
  \subfigure[Case $2$]{\def\svgwidth{0.4\columnwidth}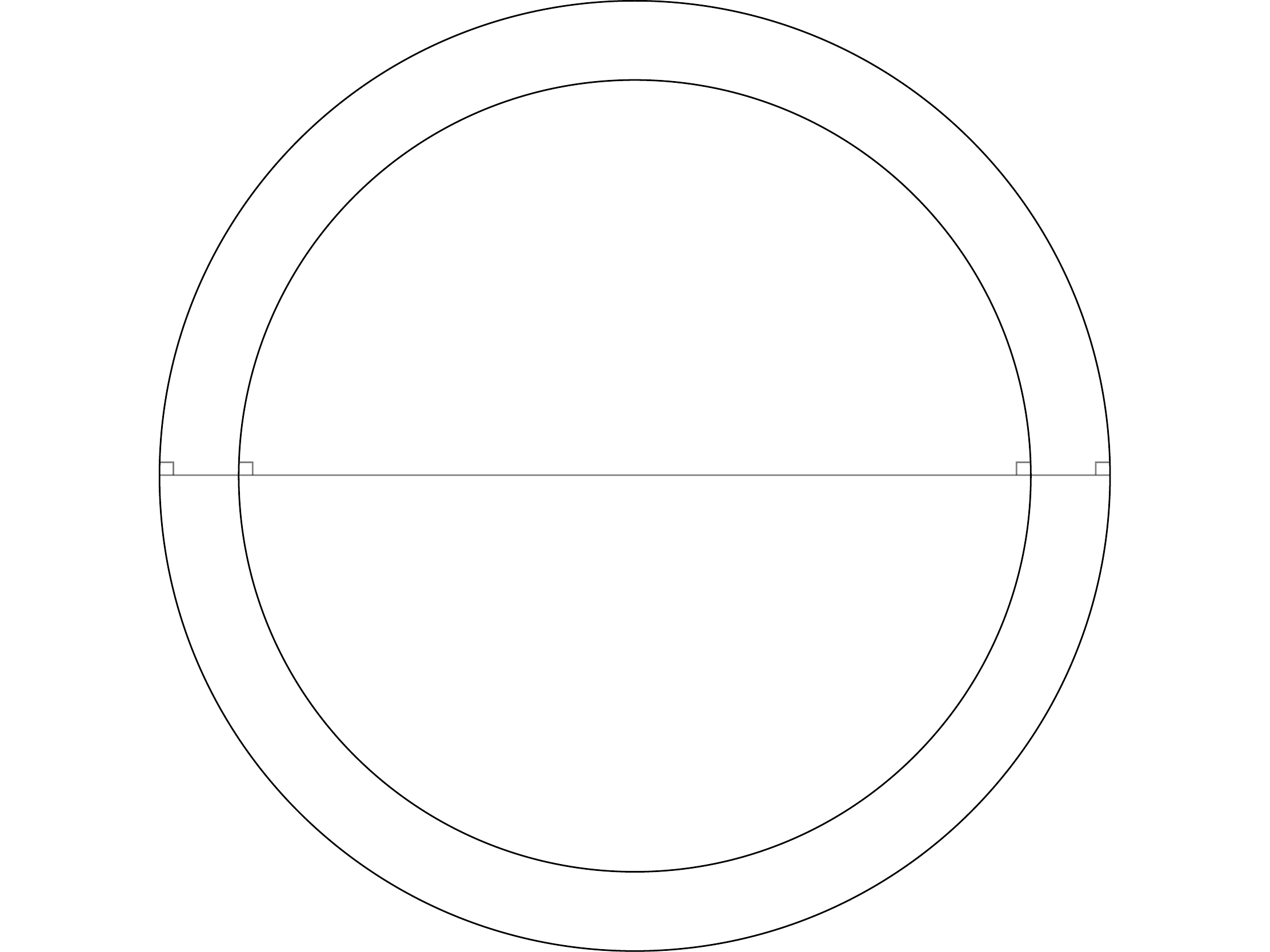}
\caption{The only possible configurations of type $D$ critical points}
\label{typeDcases}
\end{figure}

\section{Computation of Morse Indices}\label{CMI}

We proceed to find of the Morse indices of type $D$ critical points of the tripod functional $f$ defined from a curve $\gamma$ in the planar, spherical or hypberbolic geometries in the Cases $1$ and $2$ shown in Figure \ref{typeDcases} by computing the indices of $Hess(f)$ at these critical points. To do this, we approximate $\gamma$ and $\gamma_\epsilon$ up to second order by osculating circles near the points $t_0,u_0,v_0,p_0$. In our calculations the condition that $\gamma$ is sufficiently close to a circle is used to assume that the radii of the osculating circles are arbitrarily large in comparison to the distance between their centers, and that the radii are approximately equal.

In fact the indices of the type $D$ critical points are the same in the planar, spherical, and hyperbolic geometries. We first state the following definition before giving the results of our computations.

\begin{Def}[Orientation of a Diameter]\label{diamdef}
If $\overline{ab}$ is a diameter of the smooth curve $\gamma$ and if $c(x)$ is the center of curvature of $\gamma$ at $x$, then the orientation of $\overline{ab}$ is the dot product of the unit vector pointing from $a$ to $b$ and the unit vector pointing from $c(a)$ to $c(b)$.
\end{Def}

\begin{figure}[H]
\centering  
  \subfigure[Positively oriented diameter]{\def\svgwidth{0.4\columnwidth}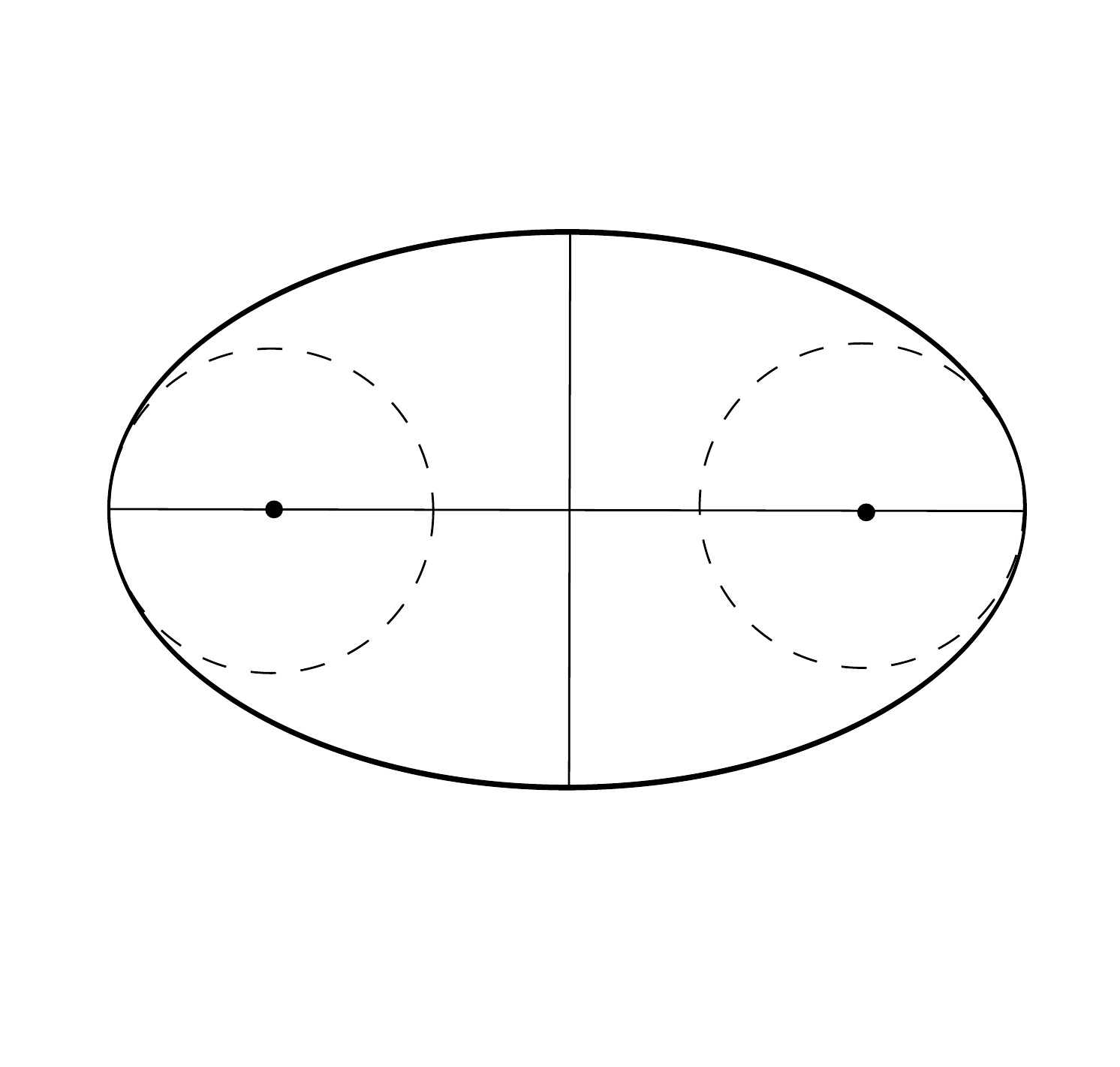}
  \quad
  \subfigure[Negatively oriented diameter]{\def\svgwidth{0.4\columnwidth}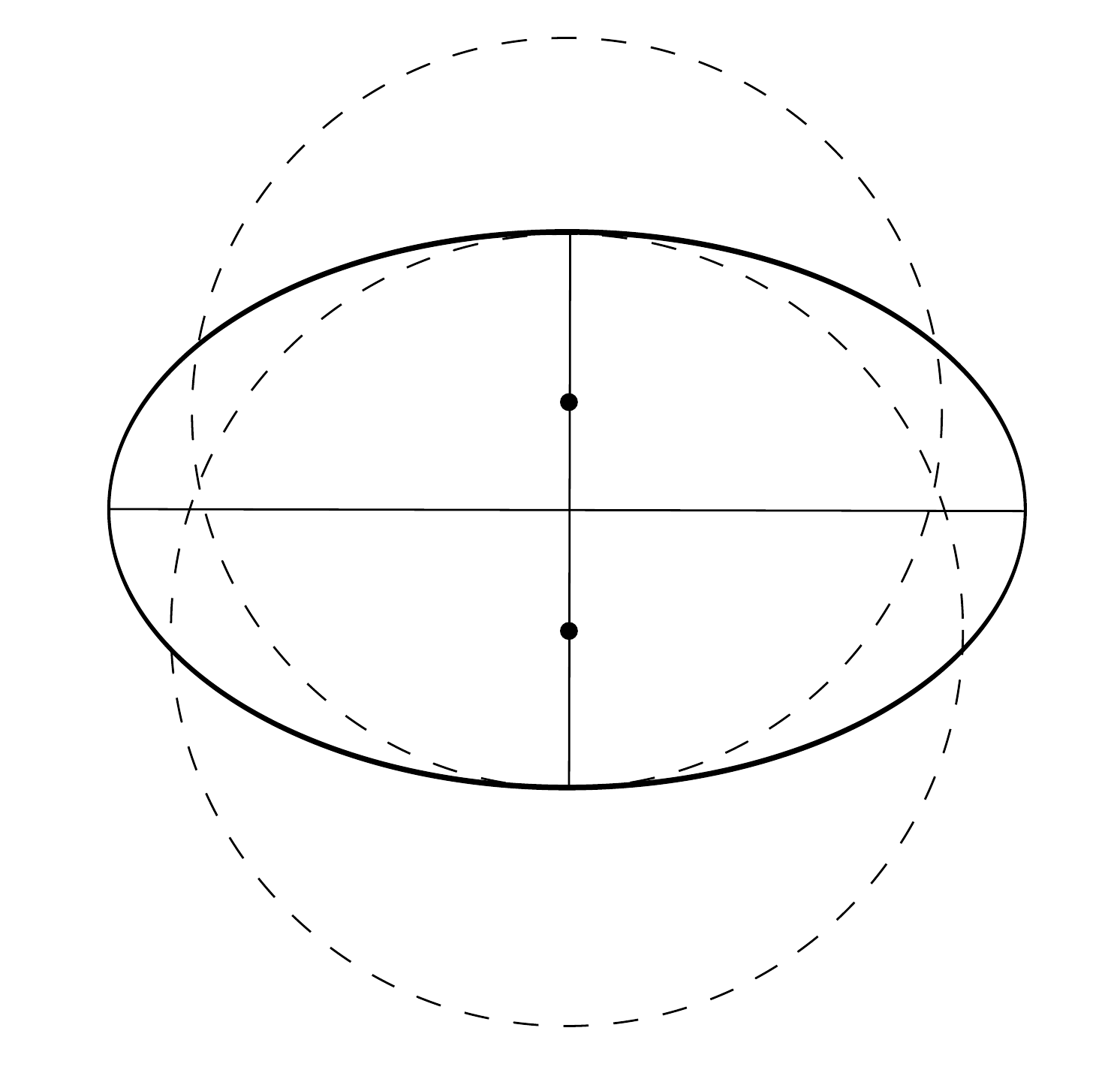}
  \caption{Orientations of diameters}
\end{figure}

The results of our computations of the indices of type $D$ critical points are then as follows, labeled by the configurations depicted in Figure \ref{typeDcases}:

\begin{align*}
\text{Case }1 \;\;
\begin{cases}
3,& \text{ for negatively oriented diameters},
\\
4,& \text{ for positively oriented diameters}.
\end{cases}
\\
\text{Case }2 \;\;
\begin{cases}
2,& \text{ for negatively oriented diameters},
\\
3,& \text{ for positively oriented diameters}.
\end{cases}
\end{align*}

The computations in the planar, spherical, and hyperbolic geometries are quite similar. Below we include some of the details of our computations in the hyperbolic geometry setting.

\subsection{Case $1$, Hyperbolic Geometry}

We use the Poincar\'{e} disk model shown in Figure \ref{hyperbolicfig1}; $\overline{oq}$ is a segment of a diameter of the curve $\gamma$ (not shown) so that the type $D$ critical point $(t_0,u_0,v_0,p_0)$ of $f$ is given by $p_0$ lying on this diameter and $\gamma$, closer to $o$, and $t_0=u_0=v_0$ all lying on the opposite side of this diameter on $\gamma_\epsilon$. The curve $\gamma$ has radius of curvature $r$ at point $p_0$, with center of curvature $o$, while $\gamma_\epsilon$ has radius of curvature $R$ at point $t_0=u_0=v_0$, with center of curvature $q$. Note carefully that $||\overline{oq}||=d$ is defined to be a signed distance with sign corresponding to the orientation of the diameter of $\gamma$ through $\overline{oq}$. Our assumption that $\gamma$ is sufficiently close to a circle allows us to assume that $r$ is close to $R$ and that the magnitude of $d$ is small.

\begin{figure}[H]
	\centering
	\def\svgwidth{0.8\columnwidth}
	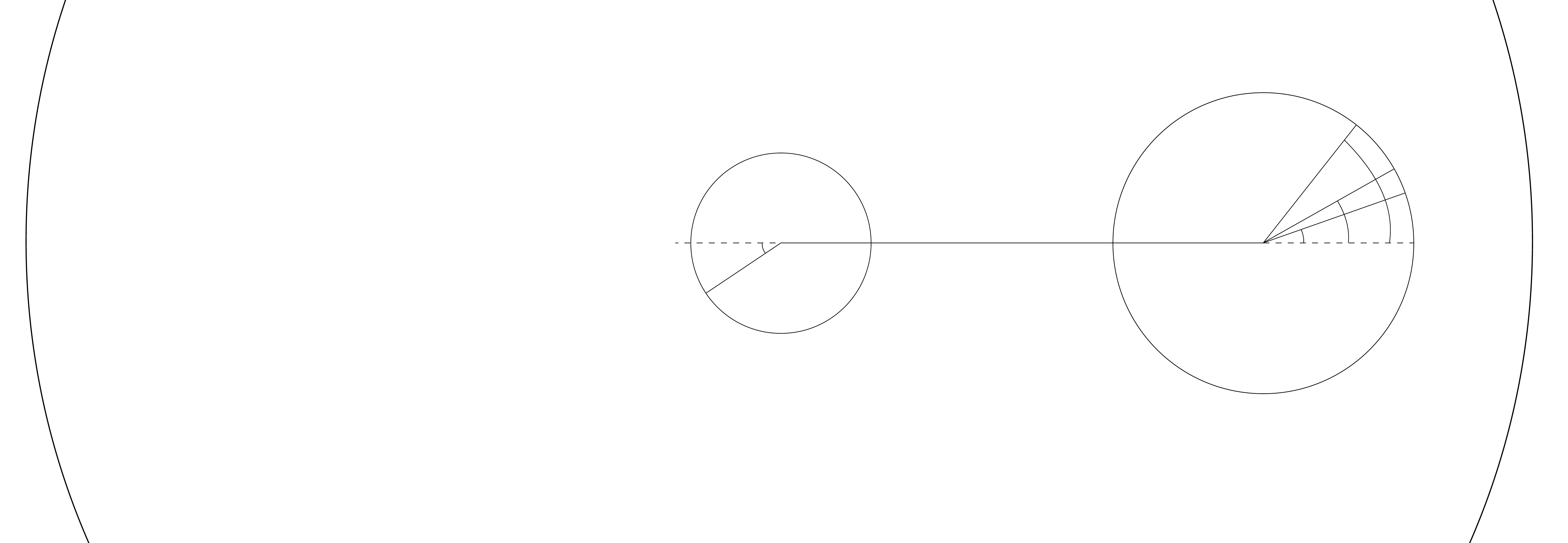
	\caption{Case $1$, hyperbolic geometry}
	\label{hyperbolicfig1}
\end{figure}

We perturb $t,u,v,p$ from $t_0,u_0,v_0,p_0$, respectively, along the corresponding curves ($\gamma_\epsilon$ and $\gamma$), approximating up to second order by moving along the appropriate osculating circles by angles $\alpha,\beta,\gamma,\delta$, giving the following coordinates:

\begin{align*}
p&=(-r\cos\alpha,-rsin\alpha)
\\
&\approx(-r(1-\frac{\alpha^2}{2}),-r\alpha),
\\
t&=(d+R\cos\beta,R\sin\beta)
\\
&\approx(d+R(1-\frac{\beta^2}{2}),R\beta),
\\
u&=(d+R\cos\gamma,R\sin\gamma)
\\
&\approx(d+R(1-\frac{\gamma^2}{2}),R\gamma),
\\
v&=(d+R\cos\delta,R\sin\delta)
\\
&\approx(d+R(1-\frac{\delta^2}{2}),R\delta).
\end{align*}

Define the function
\begin{align*}
g(\alpha,\beta,\gamma,\delta)=d(t,p)+d(u,p)+d(v,p),
\end{align*}
where $d(x,y)=\text{arccosh}\left(1+2\frac{||x-y||^2}{(1-||x||^2)(1-||y||^2)}\right)$ is the hyperbolic metric and $||\cdot||$ is the usual metric in the plane restricted to the disk. We then analyze the signs of the principal minors of the Hessian of $g$. Below, $M_i$ denotes the $i$th principal minor of the $4\times 4$ matrix $Hess(g)$, the determinant of the $i\times i$ upper left corner of $M$.
\begin{enumerate}
\item $M_4$: Letting $r=R$, we find that
\begin{align*}
\lim_{d\rightarrow 0}\frac{\det(M_4)}{d}={\frac {{-6R}^{3}}{ \left( {R}^{4}+1+2\,{R}^{2} \right)  \left( -1+{
R}^{4} \right) }}.
\end{align*}
\item $M_3$: Letting $r=R$ and $d=0$ we find that
\begin{align*}
\det(M_3)=-\frac{R^3}{(R^2+1)^3}.
\end{align*}
\item $M_2$: Letting $r=R$ and $d=0$ we find that
\begin{align*}
\det(M_2)=\frac{2R^2}{(R^2+1)^2}.
\end{align*}
\item $M_1$: Letting $r=R$ and $d=0$ we find that
\begin{align*}
\det(M_1)=\frac{-3R}{R^2+1}.
\end{align*}
\end{enumerate}

Having assumed that $d$ is small, we obtain:

\begin{tabular}{l l}
	Leading Minor & Sign \\
	$M_1$ & $-$ \\
	$M_2$ & $+$ \\
	$M_3$ & $-$ \\
	$M_4$ & $\begin{cases} - \text{ if } d<0 \\
						   + \text{ if } d>0. \\
			 \end{cases}$ 
	\end{tabular}

The following property of linear algebra \cite{kaplansky} then allows us to conclude that Morse index of type $D$ critical points of the tripod functional $f$ in the Case $1$ configuration is $3$ if $d<0$ and $4$ if $d>0$.

\begin{Prop}\label{linalgprop}
Let $A$ be an $n\times n$ symmetric matrix with principal minors $A_1,A_2,\ldots,A_n$ nonzero. Then $A_1, A_2/A_1,\ldots,A_n/A_{n-1}$ are the diagonal entries in a diagonalization of $A$. 
\end{Prop}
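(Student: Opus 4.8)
The plan is to recognize this as the classical statement that the ratios $A_k/A_{k-1}$ are the pivots of symmetric Gaussian elimination, equivalently the diagonal entries of the $D$ factor in a congruence diagonalization $A=LDL^{T}$ with $L$ unit lower triangular. The hypothesis that every leading principal minor is nonzero is exactly the condition guaranteeing that this elimination can be run to completion without encountering a zero pivot. I would prove the existence of such a diagonalization, together with the identification of its diagonal entries, by induction on $n$.

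For the base case $n=1$ the matrix is $A=(A_1)$, already diagonal with sole entry $A_1$, matching the claim. For the inductive step, set $a_{11}=A_1\neq 0$ and carry out one step of symmetric elimination: there is a unit lower-triangular $E$ (subtracting multiples of the first row from the others, and doing the same to the columns) with $E A E^{T}=\begin{pmatrix} a_{11} & 0 \\ 0 & A' \end{pmatrix}$, where $A'$ is symmetric of size $(n-1)\times(n-1)$. The crucial bookkeeping is that $A'$ again has all leading principal minors nonzero. Indeed, since the leading $j\times j$ block of the triangular $E$ is itself unit lower triangular with determinant $1$, the leading $j\times j$ minor of $E A E^{T}$ equals $A_j$; on the other hand, by block-triangularity, that same minor equals $a_{11}$ times the leading $(j-1)\times(j-1)$ minor of $A'$. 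Hence the leading $k\times k$ minor of $A'$ equals $A_{k+1}/A_1\neq 0$. Applying the inductive hypothesis to $A'$ diagonalizes it by congruence with diagonal entries $A_2/A_1,\; A_3/A_2,\;\dots,\;A_n/A_{n-1}$, where I have used $(A')_k=A_{k+1}/A_1$ so that the successive minor-ratios of $A'$ telescope to the minor-ratios of $A$. Prepending the first pivot $a_{11}=A_1$ then gives a congruence diagonalization of $A$ with diagonal entries $A_1,\,A_2/A_1,\dots,A_n/A_{n-1}$, as required.

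A clean way to package the final identification, avoiding any index chase, is to record the relation $A_k=\prod_{i=1}^{k} d_i$ between the leading principal minors and the diagonal entries $d_i$ of $D$ in $A=LDL^{T}$: the leading $k\times k$ block of $LDL^{T}$ factors as $L^{(k)}D^{(k)}(L^{(k)})^{T}$ with $L^{(k)}$ the unit lower-triangular leading block of $L$ and $D^{(k)}=\mathrm{diag}(d_1,\dots,d_k)$, since the remaining blocks of $L$ and $L^{T}$ do not reach that corner; taking determinants yields $A_k=d_1\cdots d_k$, whence $d_1=A_1$ and $d_k=A_k/A_{k-1}$. The main obstacle is thus not the final arithmetic but establishing the factorization at all: one must confirm that symmetric elimination never stalls, and this is precisely propagated through the induction by the verification that the reduced matrix $A'$ inherits nonvanishing leading minors via $(A')_k=A_{k+1}/A_1$.
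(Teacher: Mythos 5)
Your proof is correct: the induction via one step of symmetric elimination, the verification that the reduced matrix inherits nonvanishing leading principal minors through $(A')_k = A_{k+1}/A_1$, and the telescoping identity $A_k = d_1\cdots d_k$ obtained from the block structure of $L D L^{T}$ are all sound, and you rightly interpret ``diagonalization'' as congruence diagonalization ($A = LDL^{T}$), which is exactly what the paper needs, via Sylvester's law of inertia, to read off Morse indices from the signs of the minor ratios. For comparison, the paper supplies no proof of this proposition at all---it simply cites the result to Kaplansky's textbook \cite{kaplansky}---so your argument fills that gap with the standard elimination-plus-induction proof rather than diverging from any argument actually present in the paper.
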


\subsection{Case $2$, Hyperbolic Geometry}

\begin{figure}[H]
	\centering
	\def\svgwidth{0.8\columnwidth}
	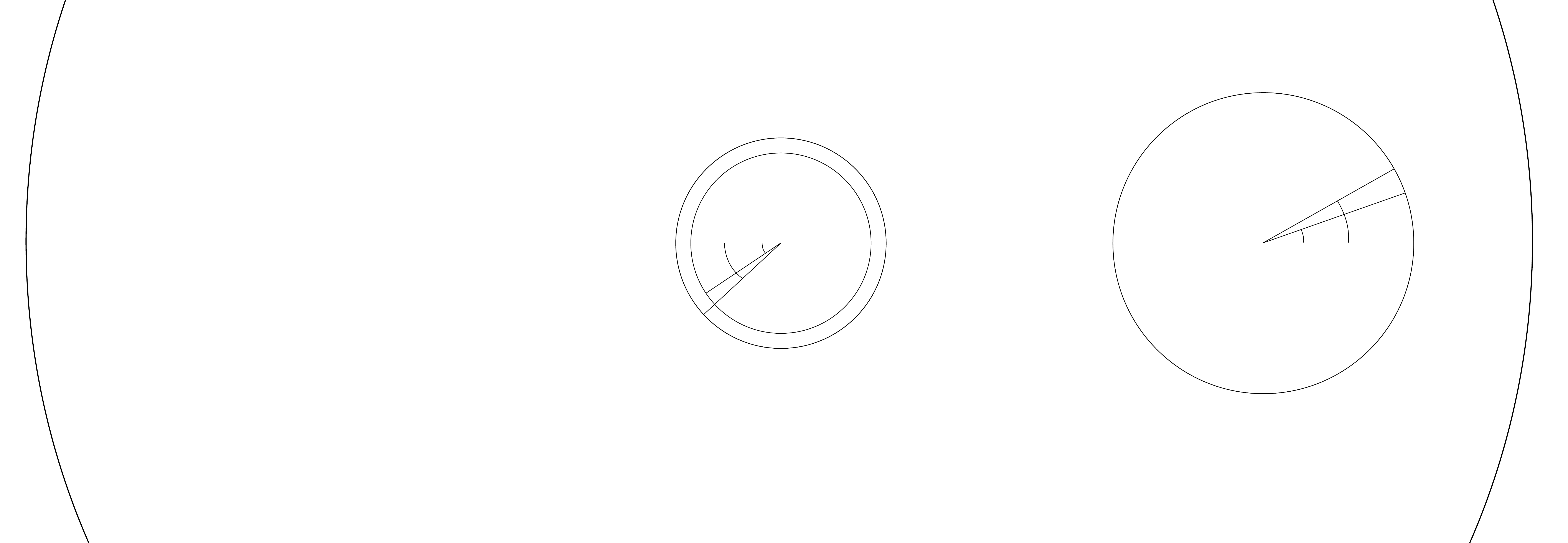
	\caption{Case $2$, hyperbolic geometry}
	\label{hyperbolicfig}
\end{figure}

We have nearly the same situation as before, but now $t_0$ lies on $\gamma_\epsilon$ on the same side of the diameter through $\overline{oq}$ as $p$ on $\gamma$. Using approximations similar to before, we have:
\begin{align*}
p&=(-r\cos\alpha,-r\sin\alpha),
\\
t&=(-(r+\epsilon)\cos\alpha,-(r+\epsilon)\sin\alpha),
\\
u&=(d+R\cos\gamma,R\sin\gamma),
\\
v&=(d+R\cos\delta,R\sin\delta).
\end{align*}
Again we define the function
\begin{align*}
g(\alpha,\beta,\gamma,\delta)=d(P,X)+d(P,Y)+d(P,Z),
\end{align*}
where $d(x,y)=\text{arccosh}\left(1+2\frac{||x-y||^2}{(1-||x||^2)(1-||y||^2)}\right)$ is the hyperbolic metric and $||\cdot||$ is the usual metric in the plane restricted to the disk. We again analyze the signs of the principal minors of $Hess(g)$, and in addition to our assumptions that $\gamma$ is close to a circle we may further take $\epsilon$ to be arbitrarily small.
\begin{enumerate}
\item $M_4$: Letting $r=R$, we find that
\begin{align*}
\lim_{\epsilon\rightarrow 0^+}\lim_{d\rightarrow 0}\frac{\epsilon}{d}\det(M_4)=-\frac{2R^4(R^2+1)}{(R^4+1)^{\frac{3}{2}}(R^2-1)^2}.
\end{align*}
\item $M_3$: Letting $r=R$ and $d=0$, we find that
\begin{align*}
\lim_{\epsilon\rightarrow 0^+}\det(M_3)=\frac{2R^4}{(1-R^2)(R^2+1)^2}.
\end{align*}
\item $M_2$: Letting $r=R$ and $d=0$, we find that
\begin{align*}
\lim_{\epsilon\rightarrow 0^+}\det(M_2)=-\frac{4R^3}{1-R^4}.
\end{align*}
\item $M_1$: Letting $r=R$ and $d=0$, we find that
\begin{align*}
\lim_{\epsilon\rightarrow 0^+}\det(M_1)=\frac{2R^2}{1-R^2}.
\end{align*}
\end{enumerate}
Again applying Proposition \ref{linalgprop}, we find that the Morse index of type $D$ critical points of the tripod functional $f$ in the Case $2$ configuration is $2$ if $d<0$ and $3$ if $d>0$.

\section{Conclusions from Morse Theory}\label{CMT}

In the previous section, we computed the Morse indices of type $D$ critical points of the tripod functional $f$ of a curve $\gamma$ sufficiently close to a circle along the boundary of our tripod configuration space in the planar, spherical, and hyperbolic geometries. With this information we may prove our results on tripod configurations. 

First we note that the diameters of a convex curve come in pairs of positively and negatively oriented diameters as defined in Definition \ref{diamdef}. This can be shown using Morse theory to study the distance function defined on pairs of points on the curve, similar to the approach employed in \cite{Halpern}. Diameters of a curve $\gamma$ also coincide with $2$-periodic billiard trajectories inside $\gamma$; see \cite{STB} for a discussion of signs of diameters in terms of the stability of $2$-periodic billiard trajectories.

\begin{proof}[Proof of theorem \ref{sphyp}]
Let $\gamma$ be a closed smooth curve in either the plane, spherical, or hyperbolic geometry. Let the number of diameters of $\gamma$ be $d$, and let $n = \frac{d}{2}$. Thus, $n$ gives both the number of positively oriented diameters and the number of negatively oriented diameters of $\gamma$. Now given a critical point $(t_0,u_0,v_0,p_0)$, we may either permute $t_0,u_0,v_0$ or move each of $t_0,u_0,v_0,p_0$ to the opposite side of the diameter associated to the critical point. Therefore for each diameter of $\gamma$ there exist $2$ type $D$ critical points in the Case $1$ configuration, and $6$ type $D$ critical points in the Case $2$ configuration. Using the Morse indices determined by our computations in Section \ref{CMI}, we see that the Morse polynomial for the type $D$ critical points of $\gamma$ is:

\[ \mc{M}_f^D(t) = C(t) + n(2t^4 + 6t^3) + n(2t^5 + 6t^4), \]

where $C(t)$ is the polynomial

\[ C(t) = \sum_k |C_k| t^k. \]

The Poincar\'{e} polynomial of the tripod configuration space is:

\[ P_M(t) = (1 + t)^3. \]

Thus, by Proposition \ref{MT} we have:

\begin{align*}
M_f^D(t) - t^5 P_M(\frac{1}{t}) &= (1 + t)Q^D(t), \\
C(t) + n(6t^3 + 8t^4 + 2t^5) - t^5(1 + \frac{1}{t})^3 &= (1 + t)Q^D(t), \\
C(t)+(1+t)(2nt^4+6nt^3)-(1+t)^3t^2 &=(1 + t)Q^D(t).
\end{align*}

This shows that $(1 + t)$ divides $C(t)$. Further note that
\begin{align*}
(1+t)(2nt^4+6nt^3)-(1+t)^3t^2=(2n-1)t^5+(8n-3)t^4+(6n-3)t^3-t^2.
\end{align*}
Now the $t^2$ coefficient above is $-1$, while $Q^D(t)$ has nonnegative coefficients, so $(1+t)(2nt^4+6nt^3)-(1+t)^3t^2\neq (1+t)Q^D(t)$ and thus $C(t)\not\equiv 0$.
Let $C(t)=(1+t)(a_k t^k+\cdot+a_j t^j)$, where $0\leq j\leq k$ and $a_k,a_j\neq 0$. Because $C(t)=a_k t^{k+1}+\cdots+a_j t^j$ has nonnegative coefficients, we see that $a_k$ and $a_j$ must be strictly positive. It follows that $C(t)$ has at least two terms of different degree with positive coefficients; i.e. $f$ has at least two critical points corresponding to two distinct tripod configurations.
\end{proof}

We conclude this section by stating two conjectures below which would generalize Theorem \ref{sphyp} and appear to be natural extensions of results in the planar case.

\begin{Conj}\label{sphypconj}
Every smooth closed convex curve in the spherical or hyperbolic geometry has at least two tripod configurations.
\end{Conj}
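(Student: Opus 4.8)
The plan is to keep the entire Morse-theoretic apparatus built for Theorem \ref{sphyp} and to remove only the hypothesis that $\gamma$ be close to a circle. I would work again with the tripod configuration space $\mc{P}_\gamma=\gamma_\epsilon\times\gamma_\epsilon\times\gamma_\epsilon\times R$ and the tripod functional $f$, whose interior critical points correspond to tripod configurations by Proposition \ref{critprop}. For a convex curve $R$ is still contractible and $\gamma_\epsilon$ is still a topological circle, so $\mc{P}_\gamma$ is homotopy equivalent to the three-torus and the Poincar\'{e} polynomial is unchanged, $\mc{P}_M(t)=(1+t)^3$. Thus the only part of the earlier proof that does not carry over verbatim is the description of the type $D$ critical points along $\partial\mc{P}_\gamma$, and the generalization reduces to redoing Section \ref{CMI} and the analysis preceding Figure \ref{typeDcases} without the osculating-circle approximation.

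First I would revisit the classification of type $D$ critical points. The ``close to a circle'' assumption was used in exactly one essential place: it forced the evolute to lie strictly inside $\gamma$, which guaranteed that through each point of $\gamma$ there pass \emph{exactly} two lines normal to $\gamma_\epsilon$, collapsing the possible type $D$ configurations to the two cases of Figure \ref{typeDcases}. For an arbitrary convex curve the evolute may protrude outside $\gamma$, and the four-vertex theorem already guarantees it has at least four cusps; the number of normals to $\gamma_\epsilon$ through a point of $\gamma$ then jumps as the point crosses the evolute, so one must enumerate all configurations $(t_0,u_0,v_0,p_0)$ with $p_0\in\gamma$, each of $t_0,u_0,v_0$ on a normal of $\gamma_\epsilon$ through $p_0$, and $d/dp|_{p_0}f$ pointing outward. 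As before these are governed by diameters of $\gamma$ (chords meeting $\gamma$ orthogonally at both ends), but now several diameters may share an endpoint and the triple $\{t_0,u_0,v_0\}$ may distribute itself among more than two normals.

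Next I would compute the Morse index of each such critical point. The computation of Section \ref{CMI} is purely local and still valid to second order, but the simplifications $r\approx R$ and $|d|$ small are no longer available, so the Hessian of $g$ and its leading principal minors must be evaluated without these reductions; the sign pattern of the minors (and hence the index, via Proposition \ref{linalgprop}) will depend on the orientation of the diameter as in Definition \ref{diamdef} together with the ratio of the two radii of curvature. I would then assemble the type $D$ Morse polynomial $\mc{M}_f^D(t)$ from these indices, weighting each diameter according to its orientation and organizing the count by the pairing of positively and negatively oriented diameters (equivalently, the $2$-periodic billiard trajectories referenced after Theorem \ref{sphyp}), and feed the result into Laudenbach's identity (Theorem \ref{MT}),
\[
\mc{M}_f^D(t)-t^5\mc{P}_M(1/t)=(1+t)Q^D(t),\qquad Q^D\text{ with nonnegative coefficients}.
\]
Exactly as in the proof of Theorem \ref{sphyp}, one would argue that $(1+t)$ divides the interior polynomial $C(t)$, that the lowest-degree term $-t^2$ of $-t^5\mc{P}_M(1/t)=-(1+t)^3t^2$ cannot be cancelled by the nonnegative contributions, so $C(t)\not\equiv0$, and finally that $C(t)=(1+t)(\,\cdots)$ with nonnegative coefficients must have at least two terms of distinct degree, giving two distinct tripod configurations.

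The main obstacle is precisely the index computation in the regime that the hypothesis of Theorem \ref{sphyp} was invented to avoid. Once a point of $\gamma$ carries more than two normals to $\gamma_\epsilon$, the relevant type $D$ critical points populate higher-dimensional strata of $\partial\mc{P}_\gamma$ and may be degenerate, so $f$ need not be Morse on the boundary without a transversality or perturbation argument; and even where it is Morse, the minor-sign analysis of the Hessian no longer collapses to the clean alternating pattern that held under $r\approx R$, $d\approx0$, and must be controlled \emph{uniformly over all convex curves} rather than only near the round circle. This is presumably why the statement is posed as a conjecture. One hedge against a complete index tabulation is to observe, as in the step above, that only the lowest-degree ($t^2$) coefficient of the Laudenbach identity was actually needed to force $C(t)\not\equiv0$: if one could show that for \emph{every} convex curve no type $D$ critical point has boundary index $1$ (so that none contributes in degree $t^2$), then the $-t^2$ term would again be unmatched and the existence of tripod configurations would follow without the full table of indices, with the two-term structure of $C(t)$ then supplying the second configuration.
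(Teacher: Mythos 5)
This statement is posed in the paper as an open conjecture: the paper proves only the near-circle case (Theorem \ref{sphyp}) and explicitly offers Conjecture \ref{sphypconj} as an unproven generalization, so there is no paper proof to match, and your proposal must stand on its own as a complete argument. It does not: what you have written is a research program whose essential steps are precisely the ones the close-to-circle hypothesis was introduced to make tractable, and you have (candidly) left them open. Concretely, three gaps remain. First, your classification step is incomplete: once a point $p_0\in\gamma$ lies on the far side of the evolute, more than two normals of $\gamma_\epsilon$ pass through it, and the type $D$ condition only requires the \emph{sum} of the three unit vectors from $t_0,u_0,v_0$ to be outward-normal at $p_0$; this balance can be achieved by non-collinear configurations distributed over three or more concurrent normals, so the reduction ``all type $D$ points lie on diameters'' fails, and with it the per-diameter counts ($2$ in Case $1$, $6$ in Case $2$) that fed the Morse polynomial. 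Second, the index computation of Section \ref{CMI} is genuinely perturbative: the sign of $\det(M_4)$ in Case $1$ is already proportional to $d$ near $d=0$, which shows the Hessian degenerates along transition loci as $r/R$ and $d$ vary; at such loci $f$ is not Morse on the boundary, Laudenbach's identity (Theorem \ref{MT}) does not apply as stated, and no uniform sign pattern for the minors (hence no index table via Proposition \ref{linalgprop}) is supplied for general convex curves. Your proposal names this obstacle but does not overcome it.

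Third, your fallback --- that no type $D$ critical point has boundary index $1$, so the $-t^2$ term in $\mc{M}_f^D(t)-t^5\mc{P}_M(1/t)$ is unmatched and $C(t)\not\equiv 0$ follows without a full index table --- is stated as a hope with no supporting argument. Indices as low as $2$ already occur in Case $2$ for negatively oriented diameters, and nothing in your sketch excludes index $1$ for the new non-collinear configurations or for diameters with extreme curvature ratios; moreover the possibly positive-dimensional strata of degenerate boundary critical points (analogous to the type $N$ submanifold $M_N$ the paper sidesteps) would require a transversality or perturbation argument before any count is meaningful. In short, the skeleton (configuration space, $\mc{P}_M(t)=(1+t)^3$ for convex $\gamma$, the divisibility argument forcing two terms in $C(t)$) does carry over, but the two load-bearing inputs --- the enumeration of type $D$ critical points and their Morse indices --- are exactly what is missing in the general convex case, which is why the statement remains a conjecture.
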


\begin{Conj}\label{tripodconfigconj}
Every smooth closed curve in the spherical or hyperbolic geometry has at least one tripod configuration.
\end{Conj}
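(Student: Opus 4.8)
The plan is to bypass the near-circle hypothesis of Theorem~\ref{sphyp} by eliminating the tripod point from the problem, reducing to finding a critical point of a sum-of-distances function on triples of points of $\gamma$. This exploits the Fermat--Toricelli characterization, which survives verbatim in any Riemannian setting: if $Q$ is an interior minimizer of $X \mapsto \rho(A,X)+\rho(B,X)+\rho(C,X)$ for distinct $A,B,C$, then the geodesics $\overline{QA},\overline{QB},\overline{QC}$ meet at $Q$ making pairwise angles $2\pi/3$, since the gradient in $Q$ of each summand is the unit vector along the corresponding geodesic and these three unit vectors must sum to zero at a minimizer.

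Concretely, I would study the \emph{reduced functional}
\[
\Phi(A,B,C)=\min_{Q}\big(\rho(A,Q)+\rho(B,Q)+\rho(C,Q)\big)
\]
on the space of triples of points of $\gamma$, with $Q$ ranging over the relevant region (the whole hyperbolic plane, or an open hemisphere containing $\gamma$ in the spherical case, which is why great circles must be excluded). Wherever the minimizer $Q=Q(A,B,C)$ is interior and all angles of $\triangle ABC$ are strictly less than $2\pi/3$, the function $\Phi$ is smooth, and by the envelope theorem its derivative in $A$ equals $\partial_A\rho(A,Q)$, the motion of $Q$ contributing nothing because $Q$ is a minimizer. Hence a critical point of $\Phi$ at such a triple forces $\overline{QA}\perp\gamma$ at $A$, and likewise at $B$ and $C$; together with the automatic $2\pi/3$ angle condition at $Q$ this is exactly a tripod configuration. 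This is the correspondence of Proposition~\ref{critprop}, now with the $p$-coordinate solved for explicitly rather than carried along as a boundary variable, which is what generated the intractable type-$N$ locus in the unreduced setting.

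It then remains to produce a critical point of $\Phi$ lying in this smooth regime. The configuration space of triples is compact, so $\Phi$ attains a maximum (its minimum is the degenerate value $0$ at $A=B=C$), and the natural candidate is a triple maximizing the Fermat sum. The steps would be: first show that at the maximizer $A,B,C$ are pairwise distinct and $Q$ is interior — immediate for convex $\gamma$, since $Q$ lies inside $\triangle ABC$ and hence inside $\gamma$, and in general to be obtained by a local perturbation argument that removes coincidences or a boundary minimizer while strictly increasing $\Phi$ — and then invoke the criticality above. An alternative route, closer in spirit to Section~\ref{TTNIT}, would be to seek a curved analogue of Theorem~\ref{TNIT} by maximizing over circumscribing geodesic triangles and establishing an antipedal property as in Proposition~\ref{antipedalprop}.

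The hard part will be precisely the failure of smoothness of $\Phi$ along the locus where some angle of $\triangle ABC$ equals $2\pi/3$: there the Fermat point migrates to a vertex, $\Phi$ develops a corner, and an extremum may be pinned to this degenerate locus rather than to an honest interior tripod. Showing that the extremal (or a suitable mountain-pass) critical point genuinely lands in the interior-Fermat-point regime for an arbitrary, possibly nonconvex or immersed, curve is the crux. The antipedal route meets the parallel difficulty that in nonzero curvature the clean planar identity $\theta=\pi-\tau$ relating tripod angles to circumscribing-triangle angles acquires an area-dependent excess or defect, so prescribing $\theta=2\pi/3$ no longer fixes the triangle's shape; decoupling the angle condition from the size of the configuration is exactly what makes the curved statement resist the Euclidean argument of Theorem~\ref{TNIT}.
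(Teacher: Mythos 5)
You should first note that the paper does not prove this statement at all: it is stated as Conjecture~\ref{tripodconfigconj}, left open, and the paper's actual result (Theorem~\ref{sphyp}) covers only curves sufficiently close to a circle, via Morse theory on $\mathcal{P}_\gamma$ with a delicate analysis of type~$D$ boundary critical points. So there is no proof of the paper's to compare against; your argument would have to stand on its own, and it does not. Its correct and genuinely nice part is the reduction: replacing the paper's functional $f(t,u,v,p)$ by $\Phi(A,B,C)=\min_Q\bigl(\rho(A,Q)+\rho(B,Q)+\rho(C,Q)\bigr)$ solves out the $p$-variable, and the envelope-theorem computation correctly shows that a critical point of $\Phi$ with all angles of $\triangle ABC$ strictly below $2\pi/3$ (so the Fermat point $Q$ is interior, unique, and $\Phi$ is smooth there) is exactly a tripod configuration; this also eliminates the submanifold of type~$N$ critical points that forced the paper into the type-$D$ analysis. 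But your only existence mechanism --- take the global maximum of $\Phi$ --- genuinely fails, and not merely for exotic curves.

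Concretely, test it in the Euclidean plane, where the conjecture's analogue is Corollary~\ref{pln} and any sound method ought to reproduce it. For an ellipse with semi-axes $a\gg b$, take $A,C$ at the two ends of the major axis and $B$ at an end of the minor axis: the angle at $B$ is close to $\pi$, the Fermat point is pinned at the vertex $B$, and $\Phi\approx 2\sqrt{a^2+b^2}$. Any triple in the smooth tripod regime has its three segments from $Q$ at pairwise angles $2\pi/3$, so at most one of them can run nearly parallel to the major axis, giving a Fermat sum of order $a+O(b)$, strictly smaller. Hence the global maximizer sits on the corner locus you identified, where criticality of $\Phi$ yields at best a double-normal-type condition at $B$, not a tripod --- and this happens precisely when the curve is far from a circle, i.e.\ exactly outside the regime Theorem~\ref{sphyp} already handles. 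Your fallback suggestions (a mountain-pass critical point, or a curved analogue of the antipedal argument of Proposition~\ref{antipedalprop} and Theorem~\ref{TNIT}) meet the same wall: you give no mechanism forcing a critical value off the nonsmooth locus, and you yourself observe that in nonzero curvature Gauss--Bonnet breaks the rigid angle relation $\theta_i=\pi-\tau_i$ that the Euclidean circumscribing-triangle proof depends on, without proposing a replacement. So the proposal is a sensible reformulation that relocates the difficulty rather than resolving it: the conjecture remains open after your argument, reduced to the (equally open) problem of producing a critical point of $\Phi$ in the interior-Fermat regime.
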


\section{Tripod configurations for regular polygons}\label{poly}

In this section we discuss an extension of the problem of counting tripod configurations to the setting of regular polygons. Recall that for a triangle with no angles exceeding $2\pi/3$, there exists a unique point inside the triangle at which the lines drawn from that point to the triangle vertices make angles of $2\pi/3$, the Fermat-Toricelli point. Given a polygon, define a tripod configuration to be three lines $\ell_1,\ell_2,\ell_3$ passing through a point $p$ such that each of the lines passes through a different vertex of the polygon and is perpendicular to a support line of the polygon through that vertex. We consider below whether such configurations exist for regular polygons.

In general, if a tripod configuration exists for any polygon with lines passing through vertices $v_1,v_2,v_3$, then the point $p$ where all three lines coincide must be the Fermat point of the triangle formed by these three vertices (which must also have no angles exceeding $2\pi/3$); however, the additional conditions that $\overline{pv_1}$ must make an angle less than $\pi/2$ with the two sides meeting vertex $v_1$ and analogously for $p$ and $v_2,v_3$ must also be satisfied.

\begin{figure}[H]
	\begin{tikzpicture}[scale=0.5]
		\draw [thick] (0,0) -- (-3, 4) -- (0, 8) -- (5, 8) -- (8, 4) -- (5, 0) -- (0, 0);
		\draw [<->] (-4, 0) -- (-2, 8);
		\draw [dashed, ->] (-3, 4) -- (1, 3);
	\end{tikzpicture}
	\caption{A support line at a vertex of a polygon}
\end{figure}
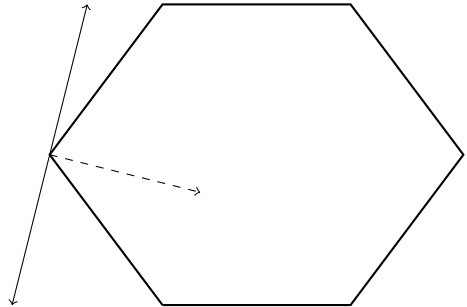

The remainder of this section is devoted to proving the following theorem:

\begin{Thm}\label{regpoly}
A regular $n$-vertex polygon has $n$ tripod configurations if $3\nmid n$ and has $n/3$ tripod configurations if $3\mid n$.
\end{Thm}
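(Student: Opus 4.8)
The plan is to reduce the enumeration of tripod configurations to a purely combinatorial count of admissible triples of vertices, and then carry out that count. Label the vertices $v_0,\dots,v_{n-1}$ at angular positions $\beta_k = 2\pi k/n$ on the circumscribed circle. By the discussion preceding the theorem, a tripod configuration amounts to a choice of three vertices whose triangle has all angles less than $2\pi/3$, together with the requirement that the Fermat point $p$ of that triangle satisfy the support-line condition at each chosen vertex. Since the interior angle of the polygon is $(n-2)\pi/n$, at each vertex the two incident edges each make the angle $\tfrac{\pi}{2}-\tfrac{\pi}{n}$ with the inward radial direction; hence the condition that $\overline{v_k p}$ make an angle less than $\pi/2$ with both edges is exactly the condition that $\overline{v_k p}$ deviate from the inward radial direction by less than $\pi/n$. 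Writing $D_k$ for the direction of the ray from $p$ to $v_k$ and setting $\epsilon_k = D_k - \beta_k$, the support-line condition at $v_k$ reads $|\epsilon_k| < \pi/n$.

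Next I would record two relations among the defect angles $\epsilon_k$. First, because the three Fermat directions are spaced by $2\pi/3$ while $\beta_k$ advances by $2\pi g_k/n$ across the arc-gap $g_k$ (the number of polygon edges between consecutive chosen vertices, so $g_1+g_2+g_3=n$), subtracting yields the exact linear relations $\epsilon_{k+1}-\epsilon_k = \tfrac{2\pi}{n}\big(\tfrac{n}{3}-g_k\big)$ cyclically. Second, writing $p=\rho(\cos\mu,\sin\mu)$, the requirement that the three lines be concurrent at the single point $p$ gives, for each $k$, the identity $\sin\epsilon_k = \rho\sin(D_k-\mu)$; summing over the three directions $D_k$, which are equally spaced by $2\pi/3$, produces the exact global constraint $\sin\epsilon_1+\sin\epsilon_2+\sin\epsilon_3 = 0$. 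Together these relations determine the $\epsilon_k$ near the equilateral solution.

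From here both directions follow. For necessity, the bound $|\epsilon_k|<\pi/n$ forces $|\epsilon_{k+1}-\epsilon_k|<2\pi/n$, so the linear relation gives $|n/3 - g_k|<1$ for every $k$; hence each gap lies in $\{\lfloor n/3\rfloor,\lceil n/3\rceil\}$ when $3\nmid n$, and every gap equals $n/3$ when $3\mid n$. For sufficiency I would solve the system for these gap patterns: the prescribed differences, together with the normalization $\sum_k \delta_k = 0$, force the gap-determined part $\delta_k$ of $\epsilon_k$ to take the values $\{\tfrac{2\pi}{3n},-\tfrac{2\pi}{3n},0\}$ in some order. Since $\sum_k\sin\delta_k = 0$ for this symmetric pattern while $\sum_k\cos\delta_k>0$, the global constraint forces the common offset to vanish, so $\epsilon_k=\delta_k$ and $\max_k|\epsilon_k| = \tfrac{2\pi}{3n}<\tfrac{\pi}{n}$. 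The inscribed-angle theorem shows the largest triangle angle is $\pi\lceil n/3\rceil/n<2\pi/3$, so the Fermat point is genuinely interior and the configuration exists and is unique. Thus tripod configurations correspond bijectively to triples of vertices all of whose gaps lie within $1$ of $n/3$.

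It remains to count such triples. Using the standard correspondence between $3$-subsets of $\mathbb{Z}_n$ and pairs (base vertex, ordered composition $g_1+g_2+g_3=n$), in which each subset is counted three times, the number of admissible triples equals $\tfrac{n}{3}$ times the number of orderings of the admissible gap multiset. When $3\nmid n$ that multiset is $\{m,m,m+1\}$ or $\{m,m+1,m+1\}$ with $m=\lfloor n/3\rfloor$, each admitting $3$ orderings, giving $n$ triples; when $3\mid n$ the multiset is $\{n/3,n/3,n/3\}$ with a single ordering, giving $n/3$ triples, as claimed. The main obstacle is the sufficiency step, passing from the easily obtained bound on the differences $\epsilon_{k+1}-\epsilon_k$ to the bound on the individual defects $\epsilon_k$; this is precisely where the exact global relation $\sum_k\sin\epsilon_k=0$ is essential, since it pins down the common offset and shows that the symmetric gap patterns genuinely realize $\max_k|\epsilon_k|<\pi/n$ rather than merely being consistent with it.
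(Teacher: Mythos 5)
Your argument is correct, but it proceeds quite differently from the paper's proof. The paper splits the theorem into two geometric halves: existence is established by testing only the ``isoceles'' candidates $v_0,v_k,v_{n-k}$ with an explicit degree computation showing the support-line condition amounts to $|120n-360k|<180$, and uniqueness is established by rotating a circumscribing equilateral triangle about the polygon and ruling out, case by case in $n \bmod 3$, the finitely many non-isoceles vertex triples that could arise as contact triples. Your proof instead sets up an exact ``defect-angle'' calculus: you translate the support-line condition into $|\epsilon_k|<\pi/n$, extract the exact difference relation $\epsilon_{k+1}-\epsilon_k=\tfrac{2\pi}{n}(\tfrac{n}{3}-g_k)$ from the $2\pi/3$ spacing of the Fermat directions, and supplement it with the global concurrency constraint $\sum_k\sin\epsilon_k=0$ (correctly derived via the law of sines in the triangles $O p v_k$, since the three sines $\sin(D_k-\mu)$ with directions spaced $2\pi/3$ sum to zero). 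This yields necessity ($|n/3-g_k|<1$, so the gap multiset is forced) and sufficiency ($\epsilon_k\in\{0,\pm\tfrac{2\pi}{3n}\}$, strictly inside the admissible window) from one and the same mechanism, followed by a clean composition count giving $n$ or $n/3$. What your route buys is uniformity and rigor: the three residue cases are handled identically, the paper's somewhat informal continuity claim that the rotating triangle produces ``no new circumscribing configurations'' is replaced by sharp inequalities, and non-isoceles triples are excluded automatically rather than checked individually; what the paper's route buys is elementary, picture-driven geometry requiring no trigonometric identities. Two small points you should make explicit: the constraint $\sin c\sum_k\cos\delta_k=0$ gives $c\in\{0,\pi\}$, and $c=\pi$ must be discarded (trivially, since the Fermat point lies inside the circumscribed circle, whence $|\epsilon_k|<\pi/2$); and your reduction to rays spaced exactly $2\pi/3$ leans on the paper's preliminary observation that the concurrence point of a polygonal tripod configuration is the Fermat--Toricelli point of the chosen vertex triangle, which you do cite and which is legitimate to assume.
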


\subsection{There exist tripod configurations for all regular polygons}

We know that there exists a single tripod configuration for an equilateral triangle corresponding to its Fermat-Toricelli point. Now consider a regular polygon $Q$ with $n$ sides and vertices labeled $v_0,\ldots,v_{n-1}$ (in cyclic order). We consider candidate ``isoceles'' tripod configurations: we choose three vertices of $Q$ that make an isoceles triangle, find its Fermat-Toricelli point, and check whether the three lines passing through it and one of the three chosen vertices form a tripod configuration of $Q$ by determining whether support line condition is satisfied. By symmetry it suffices to consider the isoceles triangles with vertices $v_0, v_k,v_{n-k}$ and Fermat-Toricelli point $P$. Then we compute (working in degrees):

\begin{align*}
&\text{Vertex angles of }Q\quad y:=180-\frac{360}{n}
\\
a:=&\measuredangle v_1 v_0 v_k=\measuredangle v_{k-1}v_kv_0=\measuredangle v_{n-1}v_0v_{n-k}=\measuredangle v_{n-k+1}v_{n-k}v_0=\frac{(180-y)(k-1)}{2}
\\
b:=&\measuredangle v_{k+1}v_kv_{n-k}=\measuredangle v_{n-k-1}v_{n-k}v_k=\frac{(180-y)(n-2k-1)}{2}
\end{align*}

The support line condition described above is equivalent to $\measuredangle v_{k+1}v_kP<90$ and $\measuredangle v_{k-1}v_kP<90$. Using the above expressions, we find that
\begin{align*}
\measuredangle v_{k+1}v_kP=30+b=90+\frac{120n-360k-180}{n}
\\
\measuredangle v_{k-1}v_kP=y-(30+b)=90+\frac{-120n+360k-180}{n}
\end{align*}
So we require that $|120n-360k|<180$. There are the three cases $n=3m$, $n=3m+1$, and $n=3m+2$ for some $m\in\mb{Z}_+$. If $n=3m$ or $n=3m+1$, then only $k=m$ satisfies this condition; if $n=3m+2$, then only $k=m+1$ satisfies this condition.

\subsection{The tripod configurations for regular polygons listed above are the only tripod configurations}

As for tripod configurations for smooth curves, the support lines corresponding to the lines forming a tripod configuration of a regular polygon $Q$ form an equilateral triangle; any tripod configuration of a polygon corresponding to vertices $v_{i_1}, v_{i_2}, v_{i_3}$ is associated to an equilateral triangle enclosing $Q$ and meeting it at the three vertices. 

We may count the configurations of ``circumscribing'' equilateral triangles about $Q$. By symmetry it suffices to count the number of such triangles passing through a particular point, say $v_0$, when the vertices of regular $n$-polygon $Q$ are labeled cyclically as before, and we may further suppose that the angle made by the side of the circumscribing equilateral triangle passing through vertex $v_0$ measures less than $\frac{180-y}{2}$.

\begin{figure}[H]  
  \centering  
  \def\svgwidth{0.3\columnwidth}  
  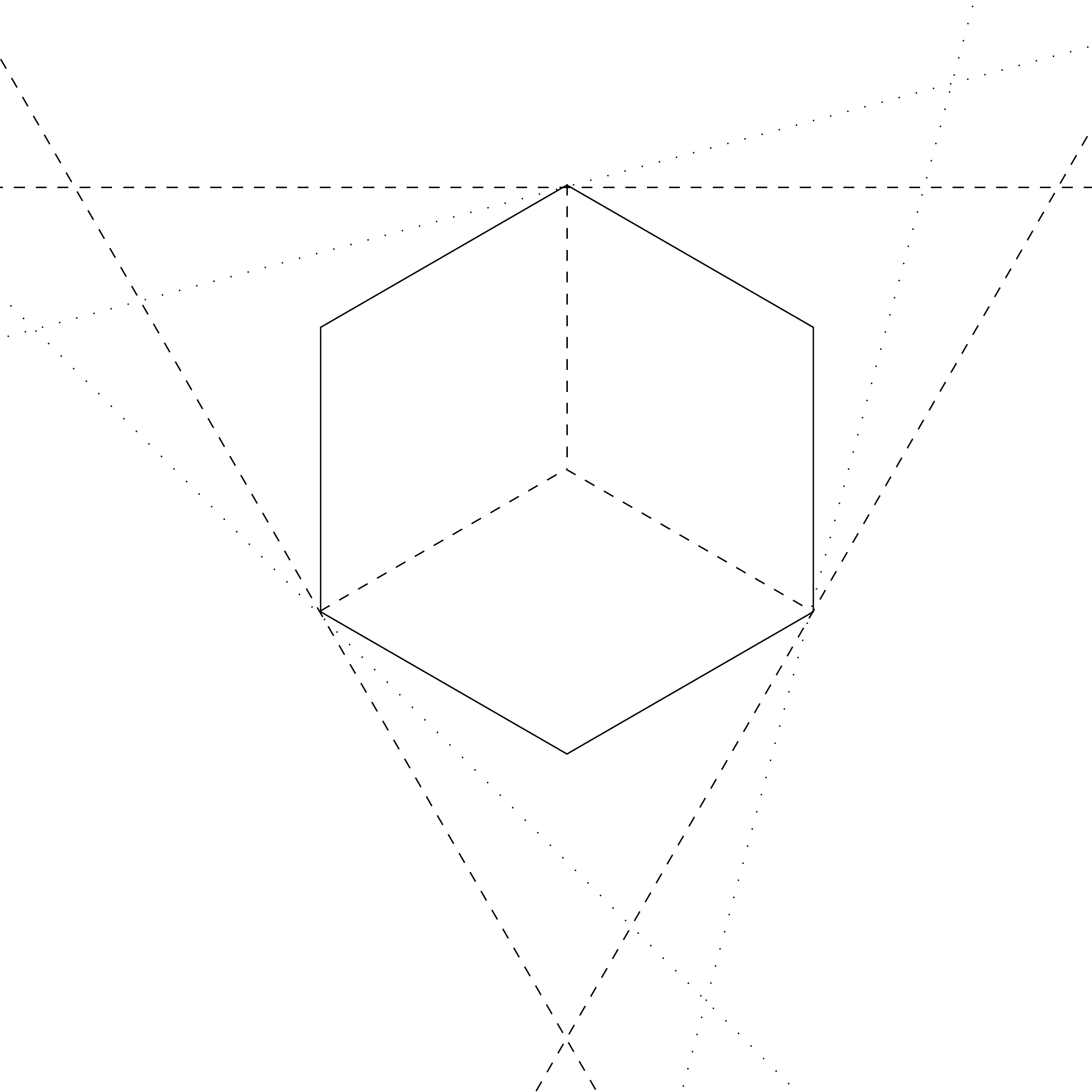
  \caption{Rotating ``circumscribing'' equilateral triangles about a regular polygon}  
\end{figure}

We again consider the three cases $n=3m$, $n=3m+1$, $n=3m+2$ separately. For $n=3m$, begin with the equilateral triangle circumscribed about $Q$ with sides which are segments on lines $\ell_1, \ell_2, \ell_3$ passing through vertices $v_0, v_m, v_{2m}$ respectively. Rotating $\ell_1$ about $v_0$ towards $\overline{v_0 v_1}$ decreases the angle between $\ell_1$ and $\overline{v_0 v_1}$ by the same amount that the angles between $\ell_2$ and $\overline{v_m v_{m+1}}$ as well as $\ell_3$ and $\overline{v_{2m}v_{2m+1}} $ decrease as $\ell_2$ and $\ell_3$ are rotated about $v_m$ and $v_{2m}$, respectively, to ensure that the triangle defined by $\ell_1,\ell_2,\ell_3$ remains equilateral. Continuing to rotate $\ell_1,\ell_2,\ell_3$ in this manner, we will find no new circumscribing configurations (up to rotational symmetry) will be produced once $v_0 v_1$ lies on $\ell_1$. So the only tripod configuration for $Q$ when $n=3m$ is the one associated with the triple described above: $v_0, v_m, v_{2m}$ and its rotated analogues.

Next we consider the case $n=3m+1$. From before we know there exists a circumscribing equilateral triangle about $Q$ with sides lying on lines $\ell_1, \ell_2, \ell_3$ passing through vertices $v_0, v_m, v_{2m+1}$ respectively. We again consider all possible circumscribing equilateral triangles by rotating $\ell_1, \ell_2, \ell_3$, with $\ell_1$ rotated about $v_0$ to decrease its angle with $\overline{v_0v_1}$ and $\ell_2$, $\ell_3$ rotated in the same direction and possibly translated in order that the equilateral triangle defined by $\ell_1, \ell_2, \ell_3$ continues to circumscribe $Q$. At any point of the rotation of $\ell_1$ towards $\overline{v_0v_1}$, $\ell_2$ will be rotating about $v_m$ or $v_{m+1}$, and $\ell_3$ will be rotating about $v_{2m+1}$ or $v_{2m+2}$. So we only need to check whether any of the three vertex triples $v_0,v_m,v_{2m+2}$, $v_0,v_{m+1},v_{2m+1}$, and $v_0,v_{m+1},v_{2m+2}$ are associated with tripod configurations. After rotation we see that $v_0,v_{m+1},v_{2m+1}$ is equivalent to $v_0,v_m,v_{2m+1}$, while $v_0,v_{m+1},v_{2m+2}$ is a distinct isoceles configuration; the previous section showed that this is not associated with a tripod configuration. It remains to consider $v_0,v_m,v_{2m+2}$. This occurs only if the acute angle between $\ell_2$ and $\overline{v_{m-1}v_m}$ is smaller than the acute angle between $\ell_2$ and $v_mv_{m+1}$. But it is easily computed that the two angles (in order) measure in degrees
\begin{align*}
\frac{300}{3m+1}
\quad\text{and}\quad
\frac{60}{3m+1}.
\end{align*}
So the last case is also not associated with a tripod configuration, and the only tripod configurations are the ones associated with the triple $v_0, v_m, v_{2m+1}$ and its rotated analogues.

Finally we consider the case $n=3m+2$. The argument goes as before, and we then need to consider the following triples: $v_0,v_{m+1},v_{2m+2}$, $v_0,v_{m+2},v_{2m+1}$, $v_0,v_{m+2},v_{2m+2}$. Now $v_0,v_{m+1},v_{2m+2}$ is equivalent to by symmetry to $v_0,v_{m+1},v_{2m+1}$, and 
$v_0,v_{m+2},v_{2m+2}$ corresponds to another isoceles case known not to be associated with a tripod configuration. Finally we consider $v_0,v_{m+2},v_{2m+1}$; if this triple were admissible, then the acute angle between $\ell_2$ and $\overline{v_{m-1}v_m}$ would be larger than the acute angle between $\ell_2$ and $\overline{v_mv_{m+1}}$. But in order, the two angles measure (in degrees)
\begin{align*}
\frac{60}{3m+2}
\quad\text{and}\quad
\frac{300}{3m+2}.
\end{align*}
So the last case also does not correspond to a tripod configuration. We conclude that for all regular polygons $Q$, the tripod configurations described above are the only tripod configurations of $Q$, with the exact counts arising from rotational symmetry.

\section{Acknowledgments}\label{ackn}

The authors of this paper are very pleased to acknowledge the generosity and mentorship of ICERM and Brown University for providing the space in which the research behind this paper could take place at ICERM's summer REU. We would like to thank Sergei Tabachnikov and Ryan Greene for being our mentors and the directors of our research, and Professor Richard Schwartz for his help and insight on the project as well. We would also like to thank Nakul Luthra for his assistance in our research.

\end{document}